\newtheorem{thm}{Theorem}[section]
\newtheorem{prop}[thm]{Proposition}
\newtheorem{lemma}[thm]{Lemma}
\newtheorem{definition}[thm]{Definition}
\newtheorem{remark}[thm]{Remark}
\def\R{\mathbb{R}}
\def\N{\mathbb{N}}
\def\Z{\mathbb{Z}}
\def\e{\varepsilon}
\def\loc{{\rm loc}}
\def\hom{{\rm hom}}
\def\esssup{\underset{x\in\Omega}{\mbox{ess-sup}}\hspace{0.03cm}}
\title{Homogenization of supremal functionals in {\color{blue} the} vectorial {\color{blue}case} (via  {\color{blue} $L^p$-approximation})} 
\author{Lorenza D'Elia, Michela Eleuteri, and Elvira Zappale}
\date{}
\begin{document}

\maketitle

\begin{abstract}  We propose a homogenized supremal functional rigorously derived via \color{blue}$L^p$-approximation \color{black} by functionals of the type $\esssup f\left(\frac{x}{\varepsilon}, Du\right)$, when $\Omega$ is a bounded open set of $\mathbb R^n$ and $u\in W^{1,\infty}(\Omega;\mathbb R^d)$. The homogenized functional is also deduced directly in the case where the sublevel sets of $f(x,\cdot)$ satisfy suitable convexity properties, as a corollary of homogenization results dealing with pointwise gradient constrained integral functionals.
\color{black}\\
\medskip

{\scriptsize
\textbf{Keywords:} $L^\infty$ functionals, $L^p$-approximation, homogenization, pointwise gradient constraints, $\Gamma$-convergence\par
 \smallskip
 \textbf{2020 Mathematics Subject Classification:} 49J45, 26B25, 74Q99}
\end{abstract}

\section{Introduction and statement of the problem}

\hspace{0.3cm} In many applications, such as dielectric breakdown for composite conductors (see \cite{GNP01}) or to model plasticity problems (see \cite{KL99}), it is customary to consider variational problems where the energies involved are not integral and where the relevant quantities that come into play do not express a mean property. Minimization problems therefore involve functionals of the type
\begin{equation}\label{pb01}
F(u) = \esssup f(x, u(x), D u(x)),
\end{equation}
where $\Omega\subset \mathbb R^n$ is a bounded open set, $u\in W^{1,\infty}(\Omega;\mathbb R^d)$, and $f: \Omega \times \mathbb R^d \times \mathbb R^{d\times n} \to \mathbb R$ is a suitable Borel function.
\color{black}
\textcolor{blue}{In view of} paper \cite{ABP02}, these problems have been addressed \textcolor{blue}{through the study of} {\it supremal functionals.}
It is worth to emphasize that this type of functionals can be seen as the counterpart of \textcolor{blue}{integral functionals, which give rise to} the Euler-Lagrange ({\it Euler-Aronsson}) equations (or systems) of $\Delta_\infty$-type (see the pioneering papers \cite{A1, A2, A3, A4}) and the more recent contributions \cite{AK, CKM}, among a wide literature.
\\
\par On the other hand, for {\color{blue} minimization problems involving} \eqref{pb01}, it is important to consider the pointwise behavior of the energy density also on very small sets; therefore it is relevant to ask whether this class of functionals is stable under $\Gamma$-convergence in $L^{\infty}.$

A first answer to this question has been derived in  \cite{BGP04} in the framework of homogenization, a theory establishing the macroscopic behaviour of materials with highly heterogeneous microstructure. Among the vast literature,  we single out the book \cite{BD} for an overview of homogenization of integral energies and the contributions \cite{BCPD21, BFL00, DF16} and references therein for some recent developments.\\  
In \cite{BGP04}, the authors consider the following \textcolor{blue}{family} of functionals
\[
F_\e(u):= \esssup f\left({x\over \e},  \nabla u(x)\right),
\]
where \textcolor{blue}{$\varepsilon > 0$,} $\Omega$ is a bounded set of $\mathbb{R}^n,$ $u \in W^{1,\infty}(\Omega)$ and the function $f$ is periodic in the first variable. The purpose of this contribution has been to replace this highly oscillating functional, as $\varepsilon$ goes to zero, with a simpler functional $F_{\rm hom}$, the homogenized functional, which is able to capture the relevant features of the family $\{F_\e\}_\e$.
Their work has been inspired by the papers \cite{CDP03, GNP01}: the energy density of the homogenized functional can be represented by means of a cell-problem formula
obtained by an approximation technique, thanks
to the fact that the \color{blue}$L^p$-\color{black}approximation described in the already mentioned contributions (consisting in approximating the $L^\infty$ functionals (possibly depending on $\varepsilon$) by an $L^p$-norm type ones, i.e. $(\int_\Omega f^p(\tfrac{x}{\varepsilon},\nabla u(x)) dx)^{\frac{1}{p}}$) holds true also in this inhomogeneous setting. Furthermore,  they obtain that the homogenization (i.e. a suitable variational limit as $\varepsilon \to 0$) and the \color{blue}$L^p$-\color{black}approximation (i.e. a variational limit as $p \to +\infty$) commute.\\
\color{black}
\par
In the present paper, we would like to complement the results in \cite{BGP04} (see also \cite{BPZ} and \cite{Z}) by considering homogenization for supremal functionals in the {\it vectorial {\color{blue} case}} by means of \color{blue}a $L^p$-\color{black}approximation.  More precisely,

let $\Omega$ be a bounded  open set of $\R^n$ with Lipschitz boundary. For any $\e>0$, we introduce the supremal functional defined on  $W^{1,\infty}(\Omega; \hspace{0.03cm}\R^d)$ by
   \begin{equation}
   \label{def:supfunctional}
   F_\e(u):= \esssup f\left({x\over \e}, Du(x)\right),
   \end{equation}
where the supremal integrand $f:\R^n\times \R^{d\times n}\to[0, +\infty)$ is a Borel function, $1$-periodic in the first variable which satisfies the following growth conditions: there exist  two positive constants $\alpha$ and $\beta$ such that 
          \begin{equation}
          \label{growthconditions}
          \alpha|Z|\leq f(x, Z)\leq \beta(|Z|+1),
          \end{equation}  
       for any $Z\in\R^{d\times n}$ and for every  $x\in\R^n$. %
       \color{blue} We observe that the coercivity condition in \eqref{growthconditions} can be relaxed to assume $\alpha |Z|-\frac{1}{\theta}\leq f(x,Z),$ with $\theta \in (0,+\infty)$, with some technical modifications in the arguments below, but we prefer to consider this homogeneous version for the readers' convenience since the current one is the version present in the literature we are referring to in the paper. \color{black}  
Our aim is to provide a homogenized version of the family $\{F_\e\}_\varepsilon$. To that end, we propose two strategies: one in terms of a direct relaxation procedure (\color{blue} under conditions which are natural in some model case, e.g. $f: \mathbb R^n \times \mathbb R^{d\times n} \ni(x,Z) \mapsto a(x)\Psi(Z)$ with 
\begin{align}\label{Psidef}
\Psi(Z):= \left\{\begin{array}{ll} 1 & \hbox{ if } |Z| \leq 1,\\
(1+\sqrt{||Z|-1|}+ ||Z|-1|) &\hbox{ if } |Z| >1,
\end{array}
\right.
\end{align} and $a \in C(\mathbb R^n), a \geq C, C>0$,  and $1$-periodic, i.e. $a(x)= a(x+ e_i)$, $\{e_i\}_{i=1,\dots, n}$ being the canonical basis of $\mathbb R^n$\color{black}), the other strategy, pursued in a  wider generality, consisting of an approximation approach by means of an \color{blue}$L^p$-\color{black}approximation.  More precisely, for any fixed $\e>0$  and $p >1$, \color{black} we introduce the double indexed \color{black} functional $F_{p,\e}: L^p(\Omega;\hspace{0.03cm}\R^d)\to \R\cup\{+\infty\}$ defined by  
   \begin{equation}
   \label{def:Functionalpeps}
   F_{p,\e}(u):= \begin{dcases}
   \left(\int_{\Omega} f^p\left({x\over \e}, Du(x) \right)dx \right)^{1/p} &\mbox{if } u\in W^{1,p}(\Omega;\hspace{0.03cm}\R^d),\\
   &\\
   +\infty&\mbox{otherwise in } L^p(\Omega;\mathbb R^d).
   \end{dcases}
   \end{equation}
Note that, in view of \eqref{growthconditions}, for any fixed $1<p<+\infty$, the density function $f^p(x, Z)$ satisfies the standard $p$-growth conditions in $W^{1,p}(\Omega; \hspace{0.03cm}\R^d)$, i.e., 
\begin{equation}
\notag
\alpha^p |Z|^p \leq f^p(x,Z) \leq \gamma (1+|Z|^p),
\end{equation}
for some constant $\gamma>0$ (\color{blue} e.g. $\gamma:= 2^p \beta^p$), \color{black}  for every $Z \in \mathbb R^{d \times n}$ and every $x \in \R^n$.  Applying standard results (see, e.g., \cite[Chapter 14]{BD} and \cite[Proposition 6.16 and Chapter 24]{DM93}), 
the functionals  $\{F_{p,\e}\}_{p,\varepsilon}$ $\Gamma(L^p)$-converges, as $\e\rightarrow 0$, to the functional $F^\hom_p$, i.e.
\begin{equation}\label{GammaLp}
\inf\{\liminf_{\varepsilon \to 0} F_{p,\varepsilon}(u_\varepsilon) : u_\varepsilon \to u \hbox{ in } L^p(\Omega;\mathbb R^d)\} = F^\hom_p(u),
\end{equation}
where
     \begin{equation}
     \label{def:Fhomp}
     F^\hom_p(u)\coloneqq
     \begin{dcases}
         \left(\int_{\Omega} f^\hom_p(Du(x))dx \right)^{1/p} &\hbox{ if }u \in W^{1,p}(\Omega;\mathbb R^d),\\
         &\\
         + \infty &\hbox{ otherwise in }L^p(\Omega;\mathbb R^d),
     \end{dcases}
     \end{equation}
where the effective energy density $f^\hom_p: \R^{d\times n}\to [0, +\infty)$ is characterized by the  so-called {\it asymptotic  homogenization formula }
   \begin{equation}
   \label{def:fhomp}
   f^\hom_p(Z) :=\lim_{T\to\infty} {1\over T^n} \inf\biggl\{\int_{TY} f^p(x, Z + Du(x))dx\hspace{0.03cm}:\hspace{0.03cm} u\in W^{1,p}_{0} (TY;\hspace{0.03cm} \R^d)  \biggr\}. 
   \end{equation} 
In addition, in light of \cite[Remark 14.6]{BD}, $f^\hom_p$ also satisfies the formula
   \begin{equation}
   \label{fhompdoubleinf}
   f^\hom_p(Z)=\inf_{j\in\mathbb{N}}\inf\left\{{1\over j^n} \int_{jY} f^p(x, Z+ Du(x))dx\hspace{0.03cm}:\hspace{0.03cm} u\in W^{1,p}_{\#} (jY;\hspace{0.03cm}\R^d) \right\},
   \end{equation}      
and in \eqref{def:fhomp}, one can replace $W^{1,p}_{0}(TY;\hspace{0.03cm}\R^d)$ with $W^{1,p}_{\#}(TY;\hspace{0.03cm}\R^d),$ where $\mathbb R \ni T >0,$ and $Y=(0,1)^n$ (we refer to Section \ref{not&pre} for \textcolor{blue}{the} adopted notation). We would like to \textcolor{blue}{stress that} the energy density $f$ is assumed to be a Borel function since we drop the quasiconvexity assumption in the second variable.\color{black}\\

\par At this point, we take the limit as $p\to +\infty$ of the functionals in \eqref{def:Fhomp}. In other words, we start considering first the asymptotics as the homogenization parameter vanishes in the sense of $\Gamma$-convergence with respect to the $L^p$ strong convergence, then we study the $\Gamma$-limit in the \color{blue}$L^p$-approximation \color{black} sense. This is done in Section \ref{tre} and the main result in this direction is Theorem \ref{Theorem3.6}, where it is proved that   \begin{equation}
   \notag   \Gamma(L^\infty)\mbox{-}\lim_{p\to+\infty} F^{\color{blue}\hom}_p(u)= F^\hom(u):=  \underset{x\in \Omega}{\mbox{{\rm ess-sup}}}\hspace{0.1cm} \widetilde{f}_\hom (Du(x)),
   \end{equation} for every $u \in W^{1, \infty}(\Omega; \mathbb{R}^d).$ 
   In fact, this latter functional can be considered as the homogenized version of \eqref{def:supfunctional}. \color{black} 
   We also emphasize that the effective energy density $\widetilde{f}_\hom$ is characterized through an asymptotic formula
   \begin{equation}\label{tildefhom}
   \widetilde{f}_\hom(Z):=\inf_{j\in\mathbb{N}} \inf \left\{ \underset{x\in jY}{\mbox{{\rm ess}-{\rm sup}}}\hspace{0.03cm} f_\infty(x, Z+Du(x)) \hspace{0.03cm}:\hspace{0.03cm} u\in W^{1,\infty}_\# (jY; \hspace{0.03cm}\R^d) \right\},
   \end{equation}
  \textcolor{blue}{where} 
   \[
   f_\infty(x, Z) := \sup_{k\geq 1} (\widetilde{f^k})^{1/k} (x, Z), 
    \]
for a.e. $x\in\Omega$ and for every $Z\in\mathbb R^{d\times n}$,  where $\widetilde{f^k}$ denotes the relaxed energy density of $f^k$, ($k \in \mathbb N$) given by \cite[Theorem 4.4.1]{Buttazzo}. In Section \ref{quattro}, we show that if the function $f(x,\cdot)$ is level convex and upper semicontinuous, then \eqref{tildefhom} turns into a cell formula (see \eqref{cellformula}). It is also worth recalling that in contrast with convexity, level convexity does not guarantee any semicontinuity (see \cite{RZ2}).
\\
\par 
\color{blue} We do not explicitly present any proof of the fact that, for every $\e>0$,  the $\Gamma(L^\infty)\mbox{-}\lim_{p\to +\infty} F_{p,\e}= F_\varepsilon(u)$ for every $u \in W^{1,\infty}(\Omega;\mathbb R^d)$, since the set of assumptions on our density $f$ in \eqref{def:supfunctional} and hence in \eqref{def:Functionalpeps}  (see Section 6 for more details) fits in the framework of Theorem \ref{curlcase2} in Appendix, i.e., see \cite[Theorem 2.2]{PZ20}, where this result has been proven in a wider generality.
\color{black}\\
\par The last step consists in proving a homogenization result for the family of energies $\{F_\e\}_{\e}$ given by \eqref{def:supfunctional}: this is done in Section \ref{cinque} under suitable technical assumptions. Hence, we can summarize our results by means of the following diagram, observing that the vertical right-hand side arrow \color{blue} (which guarantees the commutativity of the diagram) \color{black}is obtained by Theorem \ref{Theorem5.1}: 
\color{black}

\begin{figure}[ht]
        \centering
        \includegraphics[width=7cm]{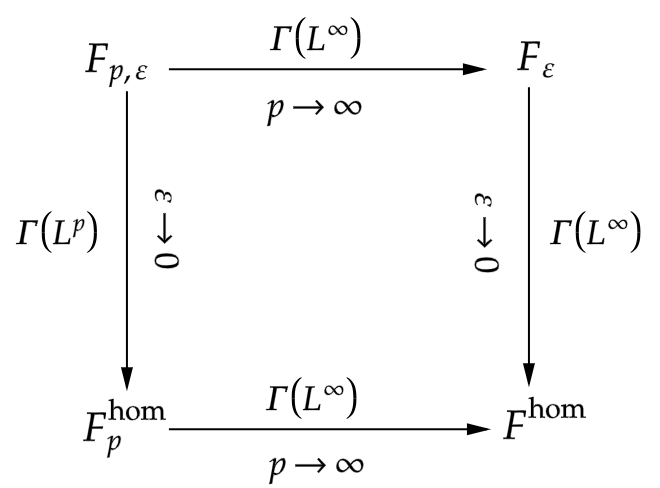}
          \label{Fig:comdiagram}
\end{figure}

\noindent More precisely,  we are able to prove that, for any $u\in W^{1,\infty}(\Omega; \hspace{0.03cm}\R^d)$,  
              \begin{equation}
               \notag
               \Gamma(L^\infty)\mbox{-}\lim_{\e\to 0} F_\e(u) = F^{\color{blue}\hom} (u).
           \end{equation}
We stress that the proof of the $\Gamma$-$\liminf$ inequality relies on the results obtained by 
 the $L^p$-approximation. Instead, the proof of the $\Gamma$-$\limsup$ inequality is more involved and it is based on a generalization of \cite[Chapter 12]{CDA02} to the vectorial case, requiring the convexity of $\Omega$ (strong-starshapedness {\color{blue} with Lipschtiz boundary} would suffice) and technical assumptions on the sublevel sets of the density $f(x,\cdot)$. 
 \color{black} Indeed, in Section \ref{secHomunb} we present a homogenization result, via $\Gamma$-convergence, of the family of vector-valued unbounded functionals of the form
    \[
        G_\e(u) := 
        \begin{dcases}
            \int_{\Omega} g\left({x\over \e}, Du(x)\right)dx & \quad \mbox{if } u\in W^{1,q}_{\loc} (\R^n; \hspace{0.03cm} \R^d)\cap L^\infty_{\loc} (\R^n; \hspace{0.03cm} \R^d), \\
            &\\
            +\infty & \mbox{otherwise},
        \end{dcases}        
    \]
where $\Omega$ is a bounded convex set (or strongly star-shaped \color{blue} with Lipschtiz boundary\color{black}), \color{black} $q\in [1, +\infty]$ and the energy density $g: \R^n\times \R^{d \times n} \to [0, +\infty]$  is ${\mathcal L}(\R^n)\otimes {\mathcal B}(\R^{d \times n})$-measurable, $Y$-periodic in the first variable and convex in the second one,  with the effective domain ${\rm dom}(g(x, \cdot))$  of $g(x,\cdot)$ containing a fixed cube $Q$, uniformly in $x$. \color{black}
 The main result of Section \ref{secHomunb}, namely the homogenization Theorem \ref{Proposition 12.6.1}, is applied, in Theorem \ref{Theorem5.1}, to the (integral of the) indicator function 
   \begin{equation*}
      1_{C(x)}(Z)\coloneqq
     \begin{dcases}
           0 & \hbox{ if } Z \in C(x), \\
           +\infty & \hbox{ otherwise},
      \end{dcases}
   \end{equation*}
 where $C(x)\subset \mathbb R^{d\times n}$ is a bounded Borel convex set, with no empty interior satisfying suitable hypotheses, as those described above, uniformly with respect to $x \in \Omega$. This result provides \color{black} also the limsup inequality. The main difficulty is to extend the results in \cite[Chapter 12]{CDA02} to \color{black} the vectorial setting. In this respect, we extended the scalar techniques in \cite{CDA02}, which, in turn, required, for instance, the need to impose further technical assumptions (see Remark \ref{remark:existenceofcube}), which, on the other hand, are similar to those present in literature in the context of relaxation for unbounded functionals depending on vector-valued fields, see, for instance, \cite{AH, W1, W2}.

On the other hand, the results contained in Section 5 are per se interesting, since they provide a vectorial extension of the results proven by \cite{AHCM, AHM2, CCGDA} and to the bibliography contained therein, we also refer to the recent contributions to integral representation problem in the unbounded vectorial setting contained in \cite{AHM, DG}, and to the results in the mechanical framework \cite{CK1, CK2}.

\color{black}

  \section{Notation and preliminary results}\label{not&pre}

\subsection{General notations and basic facts}

The present section is devoted to the introduction of the general notation
and the basic facts that we use throughout the paper.

\begin{itemize}

\item Let $X$ be a set. For every $S \subseteq X$, we denote by $\chi_S$ the characteristic
function of $S$ defined by
\[
\chi_S(x) = \left \{
\begin{array}{lll}
\!\!\!\!\!\! & 1 \qquad \textnormal{if $x \in S,$}\\[3mm]
\!\!\!\!\!\! & 0 \qquad \textnormal{if $x \in X \setminus S.$}
\end{array}
\right.
\]
{\color{blue} For every $S\subseteq X$, the indicator function $1_S$ is given by
   \begin{equation*}
       1_{S}(x)\coloneqq
       \begin{dcases}
            0 & \hbox{ if } x \in S, \\
           +\infty & \hbox{ if } x \in X \setminus S;
       \end{dcases}
   \end{equation*}

}
\item For any $x_0 \in \mathbb{R}^n$  and $r \in (0, + \infty),$ we denote by $B_r(x_0)$ the open ball of $\mathbb{R}^n$ centered in $x_0$ and with radius $r$ and by $Q_r(x_0)$ the open cube centred in $x_0$ having sidelength $r;$
\item For any $x_0 \in \mathbb{R}^n,$ $S, {\color{blue} E} \subseteq \mathbb{R}^n$ and $r \in (0, + \infty),$ we set
\begin{eqnarray*}
&& {\rm dist}(x_0, S) = \inf \{|x_0 - x|: x \in S\},\\[3mm]
&& {\color{blue}{\rm dist}(E, S) = \inf \{|x - y|: x \in  E, x\in S \},}\\[3mm]
&& S_r^- = \{x \in S: {\rm dist}(x, \partial S) > r\}, \\[3mm]
&& S_r^+ = \{x \in \mathbb{R}^n: {\rm dist}(x, S) < r\};
\end{eqnarray*}
\item We denote by $Y$ the unit cell in $\R^n$, i.e., $Y = (0,1)^n$ and by $Q$ a generic cube in $\mathbb R^{d\times n}$ which will be specified case by case; 
\item For every $N \in \mathbb N$, by $\mathcal L^N$ we denote the $N$-dimensional Lebesgue measure;
\item We say that a subset of $\mathbb{R}^n$ is a polyhedral set if it can be expressed
as the intersection of a finite number of closed half-spaces;
\item $\R^{d\times n}$ denotes the space of $d\times n$ real matrices;
\item Given $Z\in\R^{d\times n}$, the norm $|Z|$ is defined by $|Z|:=\sum_{i=1}^{d}|Z_i|$, where $Z_i$ is the $i$-th row of the matrix $Z$ and $|Z_i|$ is its Euclidean norm;
\item For every $O \subset \mathbb R^n$, by $X(O;\mathbb R^d)$ we denote a space of functions defined in $O$ with values in $\mathbb R^d$. $X_\#(KY; \mathbb{R}^d)$ denotes the space of $K$-periodic functions in $X_{\rm loc}(\mathbb{R}^n; \mathbb{R}^d),$ ($K \in (0,+\infty)$) \color{black} where 
\[
X_{\rm loc}(O; \mathbb{R}^d) = \{u: O \rightarrow \mathbb{R}^d: u \in X(U; \mathbb{R}^d) \,\,\, \textnormal{for all open sets} \,\,\,  U \subset \subset O \}.
\]
For Sobolev spaces, taking into account the notion of traces given in \cite[Section 4]{CDA02},  and for any cube $D\subset \mathbb R^n$, we denote, for every $p\in [1,+\infty]$, by
$W^{1,p}_{\#}({\color{blue} D};\mathbb R^d)$ the set
$\{v \in W^{1,p}({\color{blue} D};\mathbb R^d): \gamma_{\color{blue} D} v$ takes the same value on the opposite faces of ${\color{blue} D} \}$ {\color{blue} where $\gamma_D$ is the trace operator (for the precise definition, see \cite[Theorem 4.2.12]{CDA02} )};
\item for every open subset $\Omega \subset \mathbb R^n$,  $\textcolor{blue}{\mathcal{T}}(\Omega)$ denotes the family of open subsets of $\Omega$, and $\textcolor{blue}{\mathcal{T}}_0(\Omega)$ denotes those which are bounded. {\color{blue} In case $\Omega$ is $\R^n$, we denote with $\mathcal O$ the family of open subsets of $\R^n$};

\item For any function $f: \Omega\times \mathbb \R^{d \times n}$, by $f^{\rm ls}$ we denote the lower semicontinuous envelope of $f(x,\cdot)$ (i.e., with respect to the second variable);

\item For every $E \color{blue}\subset\color{black} \mathbb{R}^n,$ every function $u$ on $E$ and $x_0 \in \mathbb{R}^n,$ we define the {\it translate of $u$} as
\[
T[x_0]u : x \in E - x_0 \mapsto u(x + x_0).
\]
Let $\Phi: \mathcal{O} \times {\color{blue} X(\Omega; \R^d)} \rightarrow (- \infty, + \infty].$ We say that $\Phi$ is {\it translation invariant} if
\[
\Phi(\Omega - x_0, T[x_0]u) = \Phi(\Omega, u) \qquad \textnormal{for every $\Omega \in \mathcal{O}, x_0 \in \mathbb{R}^n, u \in {\color{blue} X(\Omega; \R^d)};$}
\]
\item For any $A \subset \mathbb R^n$,  we denote by ${\rm int}(A)$ the relative interior (relative to the smallest affine subspace which contains $A$) of $A$;

\item We denote by $\textcolor{blue}{A_{\rm pw}}(\mathbb{R}^n; \mathbb{R}^d)$ the set of the piecewise affine functions $u :\mathbb{R}^n \rightarrow \mathbb{R}^d$, i.e., $u$ is a continuous function 
such that 
\begin{equation}\label{aff-u-Zi}u(x) :=\sum_{i=1}^m (u_{Z_i}(x)+ c_i) \chi_{P_i}(x)\quad  \hbox{ for every }x \in \bigcup_{i=1}^m {\rm int}(P_i), 
\end{equation} with $m \in \mathbb N,$ where for any $i=1,\dots, m$, $u_{Z_i}$ is a linear function given by $u_{Z_i}(x)\coloneqq Z_ix$, with $ Z_1,\dots, Z_m\in \mathbb R^{d\times n}, c_1, \dots,c_m \in \mathbb R^d$,  and $P_1,\dots, P_m$
polyhedral
sets with pairwise disjoint nonempty interiors such that $\bigcup_{i=1}^m P_i=\mathbb R^n$;

\item Given $f:\mathbb R^{d \times n} \to [0,+\infty]$, by ${\rm dom} f$ we denote the effective domain of $f$, i.e.,
$$
{\rm dom }f:=\{Z \in \mathbb R^{d \times n}: f(Z)< +\infty\};$$

\end{itemize}

\subsection{$\Gamma$-convergence}

\noindent For an exhaustive introduction to $\Gamma$-convergence, we refer to \cite{DM93}. 
We only recall the sequential characterization of the $\Gamma$-limit when $X$ is a metric space.

\begin{prop}[{\cite[Proposition 8.1]{DM93}}] 
Let $X$ be a metric  space and let $\varphi_{k}: X \to \mathbb R \cup
\{\pm \infty\}$, for every $k\in \N$.
 Then  $\{\varphi_k\}_{k}$ $\Gamma$-converges to $\varphi$ with respect to the strong topology of  $X$ (and we write $\displaystyle  \Gamma(X)\hbox{-}\lim_{k\to +\infty}\varphi_{k}=\varphi$)  if and only if
\begin{itemize}
\item [{\rm (i)}] {\rm($\Gamma$-liminf inequality)} for every $x\in X$ and for every sequence $\{x_{k}\}_k$ converging
to $x$, it is
$$  \varphi(x)\le \liminf_{k\to +\infty} \varphi_{k}(x_{k});$$
\item [{\rm (ii)}]{\rm ($\Gamma$-limsup inequality)} for every $x\in X,$ there exists a sequence $\{x_{k}\}_k$  converging
to $x \in X$ such that
$$  \varphi(x)=\lim_{k\to +\infty} \varphi_{k}(x_{k}).$$
\end{itemize}
\end{prop}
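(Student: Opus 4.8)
The statement is the classical sequential characterisation of De Giorgi's $\Gamma$-convergence in first countable spaces, and the plan is to derive it from the topological definition adopted in \cite{DM93}. Recall that, by definition, $\Gamma(X)\text{-}\lim_{k}\varphi_k=\varphi$ means that for every $x\in X$
\[
\varphi(x)=\sup_{U\in\mathcal N(x)}\liminf_{k\to+\infty}\ \inf_{y\in U}\varphi_k(y)=\sup_{U\in\mathcal N(x)}\limsup_{k\to+\infty}\ \inf_{y\in U}\varphi_k(y),
\]
where $\mathcal N(x)$ denotes the family of open neighbourhoods of $x$ in $X$. Write $\varphi'(x)$ and $\varphi''(x)$ for the first and the second supremum (the $\Gamma$-lower and $\Gamma$-upper limits of $\{\varphi_k\}$ at $x$); since $\liminf\le\limsup$, one always has $\varphi'(x)\le\varphi''(x)$.

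The core of the argument, and the only point where the metric (hence first countable) structure of $X$ enters, is the pair of identities
\[
\varphi'(x)=\min\Big\{\liminf_{k\to+\infty}\varphi_k(x_k)\ :\ x_k\to x\Big\},\qquad \varphi''(x)=\min\Big\{\limsup_{k\to+\infty}\varphi_k(x_k)\ :\ x_k\to x\Big\},
\]
the minima being attained. The inequality ``$\le$'' in both is the easy direction: if $x_k\to x$, then for each $U\in\mathcal N(x)$ one has $x_k\in U$ for all large $k$, hence $\inf_U\varphi_k\le\varphi_k(x_k)$; taking $\liminf_k$ (resp.\ $\limsup_k$) and then the supremum over $U$ gives $\varphi'(x)\le\liminf_k\varphi_k(x_k)$ (resp.\ $\varphi''(x)\le\limsup_k\varphi_k(x_k)$), so that $\varphi'(x)$ (resp.\ $\varphi''(x)$) is a lower bound for the corresponding set of values. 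For the reverse inequality together with attainment I would fix a countable decreasing neighbourhood basis $\{B_{1/j}(x)\}_{j\in\mathbb N}$ at $x$, observe that $\liminf_k\inf_{B_{1/j}(x)}\varphi_k$ increases, as $j\to+\infty$, to $\varphi'(x)$ (and similarly for the $\limsup$ version), and then construct by a diagonal selection a sequence $x_k\to x$, with $x_k$ chosen in $B_{1/j}(x)$ over a suitable block of indices $k$ and with $\varphi_k(x_k)$ nearly minimal on that block; this yields a sequence realising $\varphi'(x)$ (resp.\ $\varphi''(x)$), also in the degenerate cases where these limits are $\pm\infty$.

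Granting these two identities, the equivalence with (i)--(ii) is a formal verification. If $\Gamma(X)\text{-}\lim_k\varphi_k=\varphi$, then $\varphi=\varphi'=\varphi''$; the identity for $\varphi'$ gives, for every $x_k\to x$, $\liminf_k\varphi_k(x_k)\ge\varphi'(x)=\varphi(x)$, which is (i), while the attainment in the identity for $\varphi''$ furnishes $x_k\to x$ with $\limsup_k\varphi_k(x_k)=\varphi''(x)=\varphi(x)$, and combined with (i) this forces $\lim_k\varphi_k(x_k)=\varphi(x)$, i.e.\ (ii). Conversely, (i) says $\varphi(x)\le\liminf_k\varphi_k(x_k)$ for every $x_k\to x$, hence $\varphi(x)\le\varphi'(x)$, while (ii) produces a sequence with $\limsup_k\varphi_k(x_k)=\varphi(x)$, hence $\varphi''(x)\le\varphi(x)$; together with $\varphi'(x)\le\varphi''(x)$ this sandwiches $\varphi(x)=\varphi'(x)=\varphi''(x)$, i.e.\ $\Gamma$-convergence. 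I expect the diagonal construction in the previous paragraph to be the only genuinely delicate step, namely checking that the extracted sequence indeed converges to $x$ and attains the value $\varphi'(x)$ (resp.\ $\varphi''(x)$); the remainder is bookkeeping with $\inf$, $\liminf$ and $\limsup$.
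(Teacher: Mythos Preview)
The paper does not prove this proposition: it is simply recalled verbatim as \cite[Proposition 8.1]{DM93} with no argument given, so there is no ``paper's own proof'' to compare against. Your sketch is the standard derivation of the sequential characterisation from the topological definition of the $\Gamma$-lower and $\Gamma$-upper limits in first countable spaces, and it is correct; the diagonal extraction you flag as the delicate step is exactly the content of \cite[Proposition 8.1]{DM93} and its supporting lemmas.
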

We recall that the  $\Gamma\hbox{-}\lim_{k\to +\infty}\varphi_{k}$ is lower semicontinuous on $X$ (see \cite[Proposition 6.8]{DM93}).

\color{blue} In what follows we deal with the strong convergence in $L^\infty$, consequently the following definition will be assumed to be \color{black} the notion of $\Gamma$-convergence for families of functionals $\{\varphi_{\varepsilon}\}_\varepsilon$ as $\mathbb R^+ \ni \varepsilon \to 0$.
We also refer to \cite[Propositions 3.2.3, 3.2.4, 3.2.6]{CDA02} to see that $\Gamma^-$-convergence (with respect to a general topology $\tau$) (see Definition 3.2.1, therein), well suited for families of functionals reduces, in the context, subject of our subsequent analysis, \color{black} to the definition below

\begin{definition}
We say that a family $\{\varphi_{\varepsilon}\}_\e$ $\Gamma$-converges to $\varphi$, with respect to the metric in $X$ as $\varepsilon\to 0^+$,
if $\{\varphi_{{\varepsilon}_{k}}\}_k$ $\Gamma$-converges to $\varphi$ for all sequences $\{{\varepsilon}_{k}\}_k$ of positive numbers converging to $0$ as $k\to +\infty$.
\end{definition}
\smallskip

Furthermore, we recall the Urysohn property for the $\Gamma$-convergence of sequences of functionals defined on metric spaces $X$ (or at least satisfying the first axiom of countability).
\begin{prop}
Under the above assumptions on $X$, let $\lambda \in [-\infty,+\infty]$, then
\begin{equation}
\notag
\lambda = \Gamma\mbox{-}\lim_{k \rightarrow + \infty} \varphi_k({\color{blue} x})
\end{equation}
if and only if
\begin{equation}
\notag
\textnormal{$\forall \{k_h\} \subseteq \mathbb{N}$ 
strictly increasing, $\exists \{k_{h_j}\} \subseteq \{k_h\}$ such that $\lambda = \Gamma \mbox{-}\lim_{j \rightarrow + \infty} \varphi_{k_{h_j}}({\color{blue} x})$}.
\end{equation}
\end{prop}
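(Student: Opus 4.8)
The plan is to establish the two implications separately, working throughout with the sequential description of the $\Gamma$-lower limit recalled above together with the equivalent ``neighbourhood'' descriptions of the $\Gamma$-lower and $\Gamma$-upper limits at a point,
\[
\Gamma\text{-}\liminf_{k\to+\infty}\varphi_k(x)=\sup_{U}\,\liminf_{k\to+\infty}\,\inf_{y\in U}\varphi_k(y),
\qquad
\Gamma\text{-}\limsup_{k\to+\infty}\varphi_k(x)=\sup_{U}\,\limsup_{k\to+\infty}\,\inf_{y\in U}\varphi_k(y),
\]
with $U$ ranging over the open neighbourhoods of $x$ in $X$ (see \cite{DM93}). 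Writing $\varphi'(x)$ and $\varphi''(x)$ for the two left-hand sides, these formulas make transparent the two facts on which the argument rests: one always has $\varphi'(x)\le\varphi''(x)$, and for any strictly increasing $\{k_h\}$ one has $\Gamma\text{-}\liminf_h\varphi_{k_h}(x)\ge\varphi'(x)$ and $\Gamma\text{-}\limsup_h\varphi_{k_h}(x)\le\varphi''(x)$, since passing to a subsequence can only raise the lower limit and lower the upper limit (the inner infima $\inf_U\varphi_k$ are unchanged, and $\liminf$ along a subsequence is $\ge$, $\limsup$ along a subsequence is $\le$, those of the whole sequence). The implication ``$\Rightarrow$'' is then immediate: if $\varphi'(x)=\varphi''(x)=\lambda$ then every $\{k_h\}$ already satisfies $\lambda\le\Gamma\text{-}\liminf_h\varphi_{k_h}(x)\le\Gamma\text{-}\limsup_h\varphi_{k_h}(x)\le\lambda$, so $\lambda=\Gamma\text{-}\lim_h\varphi_{k_h}(x)$ and one takes $\{k_{h_j}\}=\{k_h\}$.

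For the implication ``$\Leftarrow$'' I would argue by contradiction and split off the two inequalities $\varphi'(x)\ge\lambda$ and $\varphi''(x)\le\lambda$; combined with $\varphi'(x)\le\varphi''(x)$ these force $\varphi'(x)=\varphi''(x)=\lambda$, that is $\lambda=\Gamma\text{-}\lim_k\varphi_k(x)$. To obtain $\varphi'(x)\ge\lambda$: if instead $\varphi'(x)<\lambda$, then by the sequential characterisation there is $\bar x_k\to x$ with $\ell':=\liminf_k\varphi_k(\bar x_k)<\lambda$, and after passing to a subsequence I may assume $\varphi_{m_i}(\bar x_{m_i})\to\ell'$ along some strictly increasing $\{m_i\}$. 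Apply the hypothesis to $\{m_i\}$ to get $\{m_{i_j}\}$ with $\Gamma\text{-}\lim_j\varphi_{m_{i_j}}(x)=\lambda$; testing the (equal) $\Gamma$-lower limit of this sub-subsequence against the admissible competitor $\bar x_{m_{i_j}}\to x$ gives $\lambda\le\liminf_j\varphi_{m_{i_j}}(\bar x_{m_{i_j}})=\ell'<\lambda$, a contradiction. To obtain $\varphi''(x)\le\lambda$: if instead $\varphi''(x)>\lambda$, then by the neighbourhood description there is an open $U\ni x$ with $c:=\limsup_k\inf_U\varphi_k>\lambda$ (possibly $c=+\infty$), and after passing to a subsequence I may assume $\inf_U\varphi_{m_i}\to c$ along some $\{m_i\}$. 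Applying the hypothesis to $\{m_i\}$ gives $\{m_{i_j}\}$ with $\Gamma\text{-}\lim_j\varphi_{m_{i_j}}(x)=\lambda$; but the neighbourhood description for this subsequence, tested on the same $U$, gives $\lambda=\Gamma\text{-}\limsup_j\varphi_{m_{i_j}}(x)\ge\limsup_j\inf_U\varphi_{m_{i_j}}=c>\lambda$, again a contradiction. The cases $\lambda=\pm\infty$ are covered by the very same computations.

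The genuinely delicate point is the bound $\varphi''(x)\le\lambda$. For the $\Gamma$-lower limit one may test a recovery competitor of a subsequence directly, which is why that half is soft; for the $\Gamma$-upper limit, however, one cannot manufacture an admissible competitor for the whole family $\{\varphi_k\}$ by interpolating a recovery sequence of a subsequence with arbitrary (uncontrolled) values on the complementary indices. This is exactly why the neighbourhood-infimum formula for $\Gamma\text{-}\limsup$ must be used rather than the sequential one. It is also worth recording, as it clarifies the structure of the argument, that $\varphi''(x)$ is in general \emph{not} realised as the $\Gamma$-limit at $x$ of any single subsequence of $\{\varphi_k\}$, so that a ``compactness plus realising subsequence'' shortcut is unavailable and the contradiction must be extracted at the level of neighbourhood infima, as above.
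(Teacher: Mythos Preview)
Your argument is correct. The paper does not give its own proof of this proposition: it is stated in the preliminaries section as a recall of the classical Urysohn property for $\Gamma$-convergence (see \cite[Proposition~8.3]{DM93}), with no proof supplied. Your treatment---handling ``$\Rightarrow$'' by the monotonicity of $\Gamma$-lower and $\Gamma$-upper limits under passage to subsequences, and ``$\Leftarrow$'' by contradiction using the sequential description for the lower limit and the neighbourhood-infimum description for the upper limit---is a standard and complete proof of this fact, and your remark on why the $\Gamma\text{-}\limsup$ half genuinely requires the neighbourhood formulation is well taken.
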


\color{black}

\section{The $\boldsymbol{L^p}$-approximation}

\label{tre} 

In this section, after having recalled the homogenization of the functionals $\{F_{p, \e}\}_{p,\varepsilon}$ in \eqref{GammaLp}, as $\varepsilon \to 0$, leading to the functional \eqref{def:Fhomp}, we take the limit as $p\to +\infty$. {\color{blue} More precisely}, we start considering first the asymptotics as the homogenization parameter vanishes in the sense of $\Gamma$-convergence with respect to the $L^p$ strong convergence, then we study the $\Gamma$-limit in the \color{blue}$L^p$-approximation \color{black} sense, i.e., as $p\to +\infty$. To that end, we combine techniques used in \cite[Lemma 3.2]{BGP04}  and \cite[Theorem 2.2]{PZ20}. In particular, we recall the following definition, introduced in \cite{BJW04}, and proved to be necessary and sufficient for the lower semicontinuity of supremal functionals.
\begin{definition}
    \label{sMqcx}
    Let $f:\mathbb R^{d \times n}\to \R$ be a Borel measurable function. We say that $f$ is {\it strong Morrey quasiconvex} if 
$$\forall\ \varepsilon>0\ \forall\ Z\in \mathbb{R}^{d\times n}\ \forall\ K>0\ \exists\ \delta=\delta(\varepsilon, K,Z)>0:$$
$$\left.\begin{array}{l}\varphi\in W^{1,\infty}(Y;\mathbb{R}^d)\vspace{0.2cm}\\ ||D\varphi||_{L^\infty(Y;\mathbb{R}^{d\times n})}\le K\vspace{0.2cm}\\ \max_{x\in\partial Y}|\varphi(x)|\le \delta\end{array}\right\}\Longrightarrow f(Z)\le \underset{x\in Y}{\mbox{\rm ess-sup}}\hspace{0.03cm} f(Z+D\varphi(x))+\varepsilon.$$
\end{definition}

Let $\Omega$ {\color{blue} be a bounded open set of $\R^n$} and let $f:\Omega \times \mathbb R^{d\times n}\to [0,+\infty]$,  be a $\mathcal L(\Omega)\otimes \mathcal B(\mathbb R^{d \times n})$- measurable function. Following \cite{PZ20}
we set
    \begin{equation}
        \label{def:finfinity}
        f_\infty(x, Z) := \sup_{k\geq 1} (\widetilde{f^k})^{1/k} (x, Z), 
    \end{equation}
for a.e. $x\in\Omega$ and for every $Z\in\mathbb R^{d\times n}$,  where $\widetilde{f^k}$ denotes the relaxed energy density of $f^k$, ($k \in \mathbb N$) given by \cite[Theorem 4.4.1]{Buttazzo}, which is a Carath\'eodory function, quasiconvex in the second variable.

\color{blue} We remark that, in general, we cannot conclude that  $\widetilde{f^k}$ coincides with the greatest quasiconvex minorant of $f^k(x,\cdot)$, as observed in \cite[Remark 4.4.5]{Buttazzo}, see also Remark \ref{cexButtazzo} below for further comments. \color{black} 

  \begin{lemma}
  \label{lemma:recoveryseq}
  Let $f:\R^n\times\R^{d\times n}\to [0,+\infty]$ be a Borel function, $1$-periodic in the first variable, satisfying \eqref{growthconditions}. Let $f^\hom_p$ be the homogenized energy density described by \eqref{fhompdoubleinf}. Then, for any $Z\in\R^{d\times n}$, it holds 
         \begin{equation}
         \label{limfhomp}
         \lim_{p\to+\infty} (f^\hom_p)^{1/p}(Z) =\widetilde{f}_\hom(Z),
         \end{equation}
      where $\widetilde{f}_\hom$ is given by the following formula
         \begin{equation}
         \label{characterizationftildehom}
         \widetilde{f}_\hom(Z):=\inf_{j\in\mathbb{N}} \inf \left\{ \underset{x\in jY}{\mbox{{\rm ess}-{\rm sup}}}\hspace{0.03cm} f_\infty(x, Z+Du(x)) \hspace{0.03cm}:\hspace{0.03cm} u\in W^{1,\infty}_\# (jY; \hspace{0.03cm}\R^d) \right\},
         \end{equation}
         for any $Z \in\R^{d\times n}$, where $f_\infty$ is given by \eqref{def:finfinity}.
 \end{lemma}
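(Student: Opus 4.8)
The strategy is to prove the equality \eqref{limfhomp} by establishing two inequalities, exploiting the double-infimum representation \eqref{fhompdoubleinf} of $f^\hom_p$ together with the commutation of the limits $\varepsilon\to0$ and $p\to+\infty$ already known at the level of cell problems (cf.\ \cite[Lemma 3.2]{BGP04} and \cite[Theorem 2.2]{PZ20}). First I would observe that, since for fixed $j$ the map $p\mapsto\big(\tfrac{1}{j^n}\int_{jY}(f^p(x,Z+Du))\,dx\big)^{1/p}$ is monotone nondecreasing in $p$ (by Jensen/H\"older on the normalized measure $\tfrac{1}{j^n}dx$ restricted to $jY$), the normalized $L^p$-averages increase to the essential supremum of $f$ over $jY$ along any fixed competitor $u$. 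The technical content is to transfer this competitor-wise convergence through the two nested infima (over $j\in\mathbb N$ and over $u\in W^{1,p}_\#(jY;\mathbb R^d)$, resp.\ $W^{1,\infty}_\#$), and to replace the raw density $f$ by its $p$-dependent relaxation: indeed $f^\hom_p$ is built from $(f^p)^{\widetilde{\phantom{f}}}$, not from $f^p$ itself, which is exactly why $f_\infty$ — defined in \eqref{def:finfinity} as $\sup_{k\ge1}(\widetilde{f^k})^{1/k}$ — appears in the limit \eqref{characterizationftildehom} rather than $f^{\rm ls}$ or $f$.

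\emph{Lower bound} $\liminf_{p\to+\infty}(f^\hom_p)^{1/p}(Z)\ge\widetilde{f}_\hom(Z)$: fix $p$ and, for each $j$, pick a near-optimal $u_{p,j}\in W^{1,p}_\#(jY;\mathbb R^d)$ in \eqref{fhompdoubleinf}. Using $(\widetilde{f^p})\le (\widetilde{f^k})^{p/k}$ for $k\le p$ — equivalently $(\widetilde{f^k})^{1/k}\le(\widetilde{f^p})^{1/p}$, which follows from the nestedness $\widetilde{(\cdot)^{k}}$ and H\"older applied inside the relaxation formula — one gets, for every fixed $k$,
\begin{equation*}
(f^\hom_p)^{1/p}(Z)+o(1)\ \ge\ \Big(\tfrac{1}{j^n}\int_{jY}\widetilde{f^p}(x,Z+Du_{p,j})\,dx\Big)^{1/p}\ \ge\ \Big(\tfrac{1}{j^n}\int_{jY}\big((\widetilde{f^k})^{1/k}(x,Z+Du_{p,j})\big)^{p}dx\Big)^{1/p}.
\end{equation*}
Letting $p\to+\infty$ the right-hand side converges (by the $L^p\to L^\infty$ norm convergence, using the $p$-growth bounds to control the competitors) to $\operatorname*{ess\,sup}_{jY}(\widetilde{f^k})^{1/k}(x,Z+Dv)$ for a suitable weak-$*$ limit $v\in W^{1,\infty}_\#(jY;\mathbb R^d)$; taking $\sup_k$ and then $\inf_j$ yields the claimed lower bound. \emph{Upper bound} $\limsup_{p\to+\infty}(f^\hom_p)^{1/p}(Z)\le\widetilde{f}_\hom(Z)$: fix $\eta>0$, choose $j$ and $v\in W^{1,\infty}_\#(jY;\mathbb R^d)$ with $\operatorname*{ess\,sup}_{jY}f_\infty(x,Z+Dv)\le\widetilde{f}_\hom(Z)+\eta$. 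Since $f_\infty=\sup_k(\widetilde{f^k})^{1/k}$ and $\widetilde{f^p}\le (f^p)^{\rm ls}\le f^p$ pointwise while also $\widetilde{f^p}\le (f_\infty)^p$ (as $(\widetilde{f^p})^{1/p}\le f_\infty$ by definition), use $v$ as a competitor for $f^\hom_p$ in \eqref{fhompdoubleinf} to obtain
\begin{equation*}
(f^\hom_p)^{1/p}(Z)\ \le\ \Big(\tfrac{1}{j^n}\int_{jY}\widetilde{f^p}(x,Z+Dv(x))\,dx\Big)^{1/p}\ \le\ \Big(\tfrac{1}{j^n}\int_{jY}\big(f_\infty(x,Z+Dv(x))\big)^{p}dx\Big)^{1/p},
\end{equation*}
and the last term tends, as $p\to+\infty$, to $\operatorname*{ess\,sup}_{jY}f_\infty(x,Z+Dv)\le\widetilde{f}_\hom(Z)+\eta$; letting $\eta\to0$ concludes.

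\textbf{Main obstacle.} The delicate point is the lower bound: there the competitors $u_{p,j}$ genuinely depend on $p$, so one cannot simply plug a fixed function into an inequality — one must extract a weak-$*$ convergent subsequence in $W^{1,\infty}$, which requires an a priori $L^\infty$-gradient bound. This bound comes from combining the coercivity $\alpha|Z|\le f(x,Z)$ in \eqref{growthconditions} with the fact that $(f^\hom_p)^{1/p}(Z)$ stays bounded as $p\to+\infty$ (from the cheap competitor $u\equiv0$ and the upper growth $f\le\beta(|Z|+1)$), but one has to be careful that it controls only the $L^p$-norm of $Du_{p,j}$ with a $p$-independent constant, and upgrading to an $L^\infty$ bound on a limiting object requires passing through all $L^q$ norms and a diagonal argument — this is precisely the mechanism of the $L^p$-approximation and is where \cite[Lemma 3.2]{BGP04} and \cite[Theorem 2.2]{PZ20} are invoked. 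A secondary subtlety is the lower semicontinuity of $u\mapsto\operatorname*{ess\,sup}_{jY}(\widetilde{f^k})^{1/k}(x,Z+Du)$ under weak-$*$ $W^{1,\infty}$ convergence, which is not automatic for supremal functionals; here one uses that $(\widetilde{f^k})^{1/k}$ is Carath\'eodory and quasiconvex in $Z$ (inherited from \cite[Theorem 4.4.1]{Buttazzo}), hence — via the $L^p$-lower semicontinuity of the integral functionals with density $(\widetilde{f^k})^{q/k}$ and another passage $q\to+\infty$ — the required supremal lower semicontinuity follows, possibly after replacing $(\widetilde{f^k})^{1/k}$ by its strong-Morrey-quasiconvex envelope, which does not change the value of the infimum over $j$ and $u$.
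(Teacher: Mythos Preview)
Your proposal is correct and follows essentially the same route as the paper: the upper bound is obtained by testing $f^\hom_p$ with a fixed $W^{1,\infty}_\#$ competitor and using $(\widetilde{f^p})^{1/p}\le f_\infty$, while the lower bound is obtained by extracting a weak-$*$ $W^{1,\infty}$ limit of near-optimal $W^{1,p}_\#$ competitors (via the coercivity in \eqref{growthconditions} and the uniform bound coming from $u\equiv0$) and then invoking supremal lower semicontinuity. The only cosmetic difference is that the paper first proves $p\mapsto (f^\hom_p)^{1/p}$ is nondecreasing (so the limit exists and is \emph{defined} to be $\widetilde{f}_\hom$) and black-boxes the supremal liminf step as \cite[Theorem~2.2]{PZ20}, whereas you sketch that step via the fixed-$k$ inequality $(\widetilde{f^k})^{1/k}\le(\widetilde{f^p})^{1/p}$ followed by $\sup_k$; these are the same argument at different levels of unpacking. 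One small wording issue: in your lower bound you should pick a \emph{single} near-optimal pair $(\bar\jmath,u_p)$ for the double infimum in \eqref{fhompdoubleinf} (as the paper does), not ``for each $j$'' a $u_{p,j}$, since otherwise the displayed inequality $(f^\hom_p)^{1/p}(Z)+o(1)\ge\big(\tfrac{1}{j^n}\int_{jY}\widetilde{f^p}\big)^{1/p}$ goes the wrong way.
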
  
\begin{proof}
 To prove \eqref{limfhomp}, we show that $(f^\hom_p)^{1/p}$ is a non-decreasing function with respect to $p>1$, i.e., for $p_1<p_2$, we have that, for any $Z \in\R^{d\times n}$,
    \begin{equation}
    \label{fhomincreasingsequence}
    (f^\hom_{p_1})^{1/p_1}(Z)\leq(f^\hom_{p_2})^{1/p_2}(Z).
    \end{equation}
Fix $j\in\mathbb{N}$ and $Z\in\R^{d\times n}$. Since $f^{p_2}(\cdot, Z+Du(\cdot))\in L^1(jY)$, it follows that $f^{{\color{blue}p_1}}(\cdot, Z+Du(\cdot))\in L^{p_2/p_1}(jY)$, so that an application of the H\"{o}lder inequality yields to 
    \begin{equation}
    \notag
    \int_{jY} f^{p_1}\left(x, Z+ Du(x)\right)dx\leq (j^n)^{1-p_1/p_2}\left(  \int_{jY} f^{p_2}\left(x, Z+ Du(x)\right)dx\right)^{p_1/p_2}.
    \end{equation}
Moreover, due to $W^{1, p_2}_{\#}(jY;\hspace{0.03cm}\R^d)\subset W^{1, p_1}_{\#}(jY;\hspace{0.03cm}\R^d)$, we deduce that 
   \begin{align}
   {1\over j^n}\inf&\left\{\int_{jY} f^{p_1}\left(x, Z+ Du(x)\right)dx \hspace{0.03cm}:\hspace{0.03cm} u\in W^{1, p_1}_{\#}(jY;\hspace{0.03cm}\R^d)    \right\}\notag\\
   & \leq{1\over j^n}\inf\left\{\int_{jY} f^{p_1}\left(x, Z+ Du(x)\right)dx \hspace{0.03cm}:\hspace{0.03cm} u\in W^{1, p_2}_{\#}(jY;\hspace{0.03cm}\R^d)    \right\}\notag\\
   &\leq {1\over (j^n)^{p_1/p_2}}\inf\left\{\left(  \int_{jY} f^{p_2}\left(x, Z+ Du(x)\right)dx\right)^{p_1/p_2} \hspace{0.03cm}:\hspace{0.03cm} u\in W^{1, p_2}_{\#}(jY;\hspace{0.03cm}\R^d)    \right\}\notag\\
   &=\left[{1\over j^n}\inf\left\{  \int_{jY} f^{p_2}\left(x, Z+ Du(x)\right)dx\hspace{0.03cm}:\hspace{0.03cm} u\in W^{1, p_2}_{\#}(jY;\hspace{0.03cm}\R^d)    \right\}  \right]^{p_1/p_2}\notag.
   \end{align}
Taking the infimum on $j\in\mathbb{N}$ and using \eqref{fhompdoubleinf}, it follows that
     \begin{align}
     & f^\hom_{p_1}(Z)\notag\\
     &\quad \leq \inf_{j\in\mathbb{N}} \left[{1\over j^n}\inf\left\{  \int_{jY} f^{p_2}\left(x, Z+ Du(x)\right)dx\hspace{0.03cm}:\hspace{0.03cm} u\in W^{1, p_2}_{\#}(jY;\hspace{0.03cm}\R^d)    \right\}  \right]^{p_1/p_2}\notag\\
     &\quad =\left[\inf_{j\in\mathbb{N}}  {1\over j^n}\inf\left\{  \int_{jY} f^{p_2}\left(x, Z+ Du(x)\right)dx\hspace{0.03cm}:\hspace{0.03cm} u\in W^{1, p_2}_{\#}(jY;\hspace{0.03cm}\R^d)    \right\}  \right]^{p_1/p_2}\notag\\
     &\quad =(f^\hom_{p_2})^{p_1/p_2}(Z)\notag,
     \end{align}
for any $Z \in\R^{d\times n},$ which implies \eqref{fhomincreasingsequence}. Hence, the limit as $p\to+\infty$ of $(f^\hom_p)^{1/p}$   exists and we denote it with $\widetilde{f}_\hom$. In other words, 
  \[
   \lim_{p\to+\infty} (f^\hom_p)^{1/p} (Z)= :\widetilde{f}_\hom(Z) \qquad\mbox{for any }Z\in\R^{d\times n}.
   \]
To conclude the proof, it remains to show the characterization \eqref{characterizationftildehom}.
First, note that,
thanks to growth condition \eqref{growthconditions} along with \cite[Proposition 6.11]{DM93}, for $p>n$, $f^\hom_p$ may be characterized through the relaxed energy density $\widetilde{f^p}$, i.e.,
    \begin{equation}
        \notag
        f^\hom_p(Z) := \inf_{j\in\mathbb{N}}\inf\left\{{1\over j^n} \int_{jY} \widetilde{f^p}(x, Z+Du(x))dx\hspace{0.03cm} : \hspace{0.03cm} u\in W^{1,p}_{\#}(jY;\hspace{0.03cm} \R^d)  \right\}.
    \end{equation}
Hence, for fixed $j\in\mathbb{N}$ and $Z\in\R^{d\times n}$ and for a given $u\in W^{1,\infty}_{\#} (jY; \hspace{0.03cm}\R^d)$,

we deduce that
   \begin{equation}
   \notag
   \int_{jY} \widetilde{f^p} (x, Z+ Du(x))dx\leq j^n \underset{x\in jY}{\mbox{ess-sup}}\hspace{0.03cm} \widetilde{f^p}(x, Z+Du(x)).
   \end{equation}
This, combined with the fact that $W^{1,\infty}_{\#} (jY; \hspace{0.03cm}\R^d) \subset W^{1,p}_{\#} (jY; \hspace{0.03cm}\R^d),$ implies that
    \begin{align}
    {1\over j^n}&\inf \left\{\int_{jY} \widetilde{f^p} (x, Z+ Du(x))dx \hspace{0.03cm}:\hspace{0.03cm} u\in W^{1,p}_{\#} (jY; \hspace{0.03cm}\R^d) \right\} \notag\\
    & \leq {1\over j^n}\inf \left\{\int_{jY} \widetilde{f^p}(x, Z+ Du(x))dx \hspace{0.03cm}:\hspace{0.03cm} u\in W^{1,\infty}_{\#} (jY; \hspace{0.03cm}\R^d) \right\} \notag\\
    &\leq \inf \left\{\underset{x\in jY}{\mbox{ess-sup}}\hspace{0.3cm} \widetilde{f^p}(x, Z+Du(x))  \hspace{0.03cm}:\hspace{0.03cm} u\in W^{1,\infty}_{\#} (jY; \hspace{0.03cm}\R^d) \right\}. \notag
    \end{align}
Taking the infimum on $j\in\mathbb{N}$, we get that
   \begin{align}
   f^\hom_p(Z) &= \inf_{j\in\mathbb{N}}\inf\left\{{1\over j^n} \int_{jY} \widetilde{f^p}(x, Z+ Du(x))dx\hspace{0.03cm}:\hspace{0.03cm} u\in W^{1,p}_{\#} (jY;\hspace{0.03cm}\R^d) \right\}\notag\\
   &\leq \inf_{j\in\mathbb{N}} \inf \left\{\underset{x\in jY}{\mbox{ess-sup}}\hspace{0.03cm} \widetilde{f^p}(x, Z+Du(x)) \hspace{0.03cm}:\hspace{0.03cm} u\in W^{1,\infty}_\# (jY; \hspace{0.03cm}\R^d) \right\}.\notag
   \end{align}
This implies that 
   \begin{align}
   (f^\hom_p(Z) )^{1/p} &\leq \inf_{j\in\mathbb{N}} \inf \left\{\left(\underset{x\in jY}{\mbox{ess-sup}}\hspace{0.1cm} \widetilde{f^p}(x, Z+Du(x))\right)^{1/p} \hspace{0.03cm}:\hspace{0.03cm} u\in W^{1,\infty}_\# (jY; \hspace{0.03cm}\R^d) \right\} \notag\\
   &= \inf_{j\in\mathbb{N}} \inf \left\{\underset{x\in jY}{\mbox{ess-sup}}\hspace{0.1cm} (\widetilde{f^p})^{1/p} (x, Z+Du(x))\hspace{0.03cm}:\hspace{0.03cm} u\in W^{1,\infty}_\# (jY; \hspace{0.03cm}\R^d) \right\}.\notag
   \end{align}
The same holds when $p\equiv p_k$ is an integer number, therefore it is possible to repeat the same arguments as before by considering
a divergent sequence $\{p_k\}_k$ as $k\to+\infty$. We therefore deduce that
   \begin{align}
    (f^\hom_{p_k}(Z) )^{1/p_k} \leq   \inf_{j\in\mathbb{N}} \inf \left\{\underset{x\in jY}{\mbox{ess-sup}}\hspace{0.1cm} (\widetilde{f^{p_k}})^{1/p_k} (x, Z+Du(x))\hspace{0.03cm}:\hspace{0.03cm} u\in W^{1,\infty}_\# (jY; \hspace{0.03cm}\R^d) \right\}.\label{ineq4}
   \end{align}
{\color{blue} Given a bounded open set $\Omega$ of $\R^n$,} we introduce a ${\mathcal L}^n(\Omega)\otimes {\cal B}(\mathbb R^{d\times n})$-measurable function $h_\infty: \Omega\times\R^{d\times n}\to [0, +\infty)$ defined by
    \begin{equation}
        \notag
        h_\infty (x, Z) := \sup_{k\in\mathbb{N}} \left(\widetilde{f^{p_k}} \right)^{1/p_k} (x, Z).
    \end{equation}
It is possible to show that, for any $u\in W^{1,\infty}(\Omega; \hspace{0.03cm} \R^d)$,
      \begin{equation}
        \label{eqhf}
        \underset{x\in jY}{\mbox{ess-sup}}\hspace{0.1cm} h_\infty(x, Du(x)) = \underset{x\in jY}{\mbox{ess-sup}}\hspace{0.1cm} f_\infty(x, Du(x)),
      \end{equation}
for a proof see \color{blue} the Appendix where the arguments taken from \cite[equations (61) and (64) in Theorem 2.2]{PZ20} are presented. \color{black} 
 Hence, from \eqref{ineq4}, it follows that
    \begin{align}
        (f^\hom_{p_k}(Z) )^{1/p_k} &\leq   \inf_{j\in\mathbb{N}} \inf \left\{\underset{x\in jY}{\mbox{ess-sup}}\hspace{0.1cm}h_\infty (x, Z+Du(x))\hspace{0.03cm}:\hspace{0.03cm} u\in W^{1,\infty}_\# (jY; \hspace{0.03cm}\R^d) \right\}\notag\\
        &=\inf_{j\in\mathbb{N}} \inf \left\{\underset{x\in jY}{\mbox{ess-sup}}\hspace{0.1cm}f_\infty (x, Z+Du(x))\hspace{0.03cm}:\hspace{0.03cm} u\in W^{1,\infty}_\# (jY; \hspace{0.03cm}\R^d) \right\}.\notag
    \end{align}
Finally, taking the limit as $k\to+\infty$, taking into account \eqref{limfhomp}, we conclude that
    \begin{align}\label{1st}
        \widetilde{f}_\hom(Z) \leq \inf_{j\in\mathbb{N}} \inf \left\{\underset{x\in jY}{\mbox{ess-sup}}\hspace{0.1cm}f_\infty (x, Z+Du(x))\hspace{0.03cm}:\hspace{0.03cm} u\in W^{1,\infty}_\# (jY; \hspace{0.03cm}\R^d) \right\}.
    \end{align}
\par Now, we show the reverse inequality, assuming, without loss of generality, that $\widetilde f_{\hom}$ in \eqref{limfhomp} is finite. To that end, we set
	\begin{equation}
	\label{varphi}
	\varphi(Z):= \inf_{j\in\mathbb{N}}\inf\left\{ \underset{x\in jY}{\mbox{ess-sup}}\hspace{0.1cm} f_\infty(x, Z+Du(x)) \hspace{0.03cm}:\hspace{0.03cm} u\in W^{1,\infty}_\# (jY; \hspace{0.03cm}\R^d) \right\},
	\end{equation}
	 and we fix $\delta > 0$. In view of the characterization of $f^\hom_p$ given by  \eqref{fhompdoubleinf}, we  deduce  that for any $p>1$ there exist $\overline{j}\in\mathbb{N}$ and $u_p\in W^{1,p}_{\#}(\overline{j}Y\hspace{0.03cm};\hspace{0.03cm}\R^d)$ such that

 \color{black}
	\begin{equation}
	\notag
	\left({1\over\overline{j}^n}\int_{\overline{j}Y}f^p(x, Z+Du_p(x))dx\right)^{1/p}\leq (f^\hom_p(Z))^{1/p} +\delta
	\end{equation}  
Using the growth conditions \eqref{growthconditions} as well as the triangular inequality for the $L^p$-norm, it follows that 
	\begin{align}
	\left({1\over\overline{j}^n}\int_{\overline{j}Y}
	|Du_p(x)|^pdx\right)^{1/p}&\leq
	\left({1\over\overline{j}^n}\int_{\overline{j}Y}
		|Du_p(x)+Z|^pdx\right)^{1/p} + \left({1\over\overline{j}^n}\int_{\overline{j}Y}
			|Z|^pdx\right)^{1/p}\notag\\
	&\leq \left({1\over\overline{j}^n}\int_{\overline{j}Y}f^p(x, Z+Du_p(x))dx\right)^{1/p}+|Z|\notag\\
 & \leq (f^\hom_p(Z))^{1/p} +\delta + |Z|\notag\\
	&\leq C(\delta, Z),\notag
	\end{align}
 where the latter constant does not depend on $p$.
 \color{black}
This, combined with the fact that, for any $p>q>n$,
we have that $L^p(\overline{j}Y\hspace{0.03cm};\hspace{0.03cm}\R^{d\times n} )\subset L^q(\overline{j}Y\hspace{0.03cm};\hspace{0.03cm}\R^{d\times n})$, we deduce that
	\begin{equation}
	\notag
	{1\over \overline{j}^n}\|Du_p\|_{{L^q(\overline{j}Y\hspace{0.03cm}; \R^{d\times n}})}\leq {1\over \overline{j}^n} \|Du_p\|_{L^p(\overline{j}Y\hspace{0.03cm}; \R^{d\times n})}\leq C.
	\end{equation}
 \color{black}
	It is not restrictive to assume that $u_p$ has zero average, so that due to Poincar\'{e}-Wirtinger's inequality, we conclude that the sequence $\{u_p\}_{p>q}$ is bounded in $W^{1,q}_{\#}(\overline{j}Y\hspace{0.03cm};\hspace{0.03cm}\R^d)$, for $q>n$. Therefore, up to subsequences, there exists $u_\infty$ such that $u_p$ weakly converges to $u_\infty$ in $W^{1,q}(\overline{j}Y\hspace{0.03cm};\hspace{0.03cm} \R^d)$. 
	Thanks to the compactness of embedding of Sobolev spaces (see, e.g., \cite[Theorem 9.16]{Bre10}), we conclude that $u_p$ uniformly converges to $u_\infty$ as $p\to +\infty$, since $q>n$. Moreover, $u_\infty\in W^{1,\infty}_{\#}(\overline{j}Y\hspace{0.03cm};\hspace{0.03cm}\R^d)$. Indeed, from the boundedness of $\{D u_p \}_{p>q}$ in $L^q(\overline{j}Y\hspace{0.03cm}; \R^{d\times n})$ combined with the $W^{1,q}$-weak convergence of $\{u_p \}_{p>q}$ to $u_\infty$ and the lower semicontinuity of the norm, it follows that 
	     \begin{equation}
	     \notag
	     \|D u_\infty\|_{L^q(\overline{j}Y\hspace{0.03cm};\R^{d\times n})}\leq \liminf_{p\to+\infty} \|D u_p\|_{L^q(\overline{j}Y\hspace{0.03cm}; \R^{d\times n})}\leq C.
	     \end{equation}  
	 Now, taking into account that the $L^\infty$ norm can be approximated from below by the $L^q$ norm, as $q \to +\infty$, computing this latter limit,  we conclude that $\|D u_\infty\|_{L^\infty(\overline{j}Y\hspace{0.03cm};\hspace{0.03cm} \R^{d\times n})}\leq C$. This together with the uniform convergence of $u_p$ to $u_\infty$ implies that $u_\infty\in W^{1,\infty}_{\#}(\overline{j}Y\hspace{0.03cm};\hspace{0.03cm}\R^d)$.
	 Hence, we have shown that, up to subsequences, there exists $u_\infty\in W^{1,\infty}_{\#}(\overline{j}Y\hspace{0.03cm};\hspace{0.03cm}\R^d)$ such that $u_p$ uniformly converges to $u_\infty$, as $p\to+\infty$.
	Recalling \eqref{varphi}, thanks to  \cite[Theorems 2.2]{PZ20}, it follows that 
	\begin{align}
	\varphi(Z)&\leq \underset{x\in \overline{j}Y}{\mbox{ess-sup}}\hspace{0.1cm} f_\infty(x, Z+Du_\infty(x))\notag\\
	&\leq \liminf_{p\to+\infty} \left({1\over \overline{j}^n} \int_{\overline{j}Y }f^p(x, Z+Du_p(x))dx \right)^{1/p}\notag\\
	&\leq \liminf_{p\to+\infty}(f^\hom_p(Z))^{1/p} + \delta\notag\\
	&= \widetilde{f}_\hom (Z) +\delta. \notag
	\end{align}
	In view of the arbitrariness of $\delta$, we obtain the reverse inequality of \eqref{1st}, as desired.
\end{proof}   

\color{black}

\begin{remark}\rm  Since $f^\hom_p$ is also characterized through  asymptotic formula 
   \begin{equation}
       \notag
       f^\hom_p(Z)=\limsup_{T\to+\infty} \inf{1\over T^n} \left\{\int_{TY}\widetilde{f^p}(x, Z+Du(x))dx\hspace{0.03cm}:\hspace{0.03cm} u\in W^{1,p}_{\#}( TY; \hspace{0.03cm}\R^d)  \right\}, 
   \end{equation}
we deduce that 
	\begin{equation}
	\label{upperboundwidefhom}
	\widetilde{f}_\hom(Z)\leq\limsup_{T\to+\infty}\inf\biggl\{ \underset{x\in TY}{\mbox{\rm ess-sup}}\hspace{0.1cm} f_\infty(x, Z + Du(x))\hspace{0.03cm}:\hspace{0.03cm} u\in W^{1,\infty}_{\#} (TY;\hspace{0.03cm}\R^d) \biggr \}.
	\end{equation}
Indeed, for fixed $T$ and $Z\in\R^{d\times n}$, we deduce that
     \begin{align}
         {1\over T^n} &\inf\left\{ \int_{TY} \widetilde{f^p}(x, Z+Du(x))dx \hspace{0.03cm}:\hspace{0.03cm} u\in W^{1,p}_{\#}(TY; \hspace{0.03cm} \R^d) \right\}\notag\\
         & \le \, \inf\left\{  \underset{x\in TY}{\mbox{\rm ess-sup}}\hspace{0.1cm} \widetilde{f^p}(x, Z + Du(x))\hspace{0.03cm}:\hspace{0.03cm} u\in W^{1,\infty}_{\#} (TY;\hspace{0.03cm}\R^d)     \right\}.\notag
     \end{align}
Taking the limit as $T\to+\infty$, it yields to 
   \begin{align}
       \notag
       f^\hom_p(Z) \leq \limsup_{T\to+\infty} \inf\left\{  \underset{x\in TY}{\mbox{\rm ess-sup}}\hspace{0.1cm} \widetilde{f^p}(x, Z + Du(x))\hspace{0.03cm}:\hspace{0.03cm} u\in W^{1,\infty}_{\#} (TY;\hspace{0.03cm}\R^d)     \right\},
   \end{align}
which implies that 
     \begin{align}
         \notag
         (f^\hom_p)^{1/p} (Z) \leq \limsup_{T\to+\infty} \inf\left\{  \underset{x\in TY}{\mbox{\rm ess-sup}}\hspace{0.1cm} (\widetilde{f^p})^{1/p}(x, Z + Du(x))\hspace{0.03cm}:\hspace{0.03cm} u\in W^{1,\infty}_{\#} (TY;\hspace{0.03cm}\R^d)     \right\},
     \end{align}
If $p \equiv k$ is an integer number, thanks to \eqref{def:finfinity}, it is easy to show that 
    \begin{align}
         \notag
         (f^\hom_k)^{1/k} (Z) \leq \limsup_{T\to+\infty} \inf\left\{  \underset{x\in TY}{\mbox{\rm ess-sup}}\hspace{0.1cm} f_\infty(x, Z + Du(x))\hspace{0.03cm}:\hspace{0.03cm} u\in W^{1,\infty}_{\#} (TY;\hspace{0.03cm}\R^d)     \right\},
     \end{align}    
and taking the limit as $k\to+\infty$ we conclude that \eqref{upperboundwidefhom} holds.
Now, consider a divergent sequence $\{p_k\}_k$ as $k\to+\infty$. By similar arguments as those in the proof of Lemma \ref{lemma:recoveryseq}, we obtain that 
    \begin{align}
        (f^\hom_{p_k})^{1/p_k} (Z) \leq \limsup_{T\to+\infty} \inf\left\{  \underset{x\in TY}{\mbox{\rm ess-sup}}\hspace{0.1cm} f_\infty(x, Z + Du(x))\hspace{0.03cm}:\hspace{0.03cm} u\in W^{1,\infty}_{\#} (TY;\hspace{0.03cm}\R^d)     \right\}.\notag
    \end{align}
Taking the limit as $p_k\to+\infty$ yields to  \eqref{upperboundwidefhom}.	
\end{remark}  

\begin{lemma}
\label{lemma:convergenceofhomdensity}
Let $f:\R^n\times\R^{d\times n}\to [0,+\infty]$ be a Borel function, $1$-periodic in the first variable satisfying the growth conditions \eqref{growthconditions}. Then, for all open bounded set $\Omega\subset \R^n$ and $u\in W^{1,\infty} (\Omega;\hspace{0.03cm}\R^d)$, we have that
        \begin{equation}
        \notag
        \lim_{p\to+\infty}\left(\int_{\Omega} f^\hom_p(Du(x))dx \right)^{1/p}=\underset{x\in\Omega}{\mbox{\rm ess-sup}}\hspace{0.1cm} \widetilde{f}_\hom(Du(x)),
        \end{equation}
where $\widetilde{f}_\hom$ is given by \eqref{characterizationftildehom}.
\end{lemma}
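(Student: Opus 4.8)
The plan is to reduce the statement to the two facts furnished by Lemma~\ref{lemma:recoveryseq}: that the map $p\mapsto g_p(Z):=(f^\hom_p(Z))^{1/p}$ is non-decreasing (inequality~\eqref{fhomincreasingsequence}) and that $g_p(Z)\to\widetilde f_\hom(Z)$ pointwise in $Z\in\R^{d\times n}$ as $p\to+\infty$. First I would record the elementary a priori bounds: testing the infimum in \eqref{fhompdoubleinf} with $u\equiv 0$ and using \eqref{growthconditions} gives $f^\hom_p(Z)\le\beta^p(1+|Z|)^p$, hence $g_p(Z)\le\beta(1+|Z|)$ and, passing to the limit, $\widetilde f_\hom(Z)\le\beta(1+|Z|)$ as well. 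Since $u\in W^{1,\infty}(\Omega;\R^d)$, this makes $M:=\underset{x\in\Omega}{\mbox{ess-sup}}\,\widetilde f_\hom(Du(x))$ finite and the functions $x\mapsto g_p(Du(x))$ uniformly bounded on $\Omega$; measurability is not an issue, since each $f^\hom_p$ is quasiconvex and finite, hence continuous, while $\widetilde f_\hom$, being an increasing pointwise limit of continuous functions, is Borel.

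For the $\limsup$ inequality I would use that, $g_p$ being the increasing limit of the family to $\widetilde f_\hom$, one has $g_p\le\widetilde f_\hom$ pointwise, so
$\left(\int_\Omega f^\hom_p(Du(x))\,dx\right)^{1/p}=\|g_p(Du(\cdot))\|_{L^p(\Omega)}\le M\,|\Omega|^{1/p}\to M$.
For the $\liminf$ inequality I would fix $q>1$ and exploit monotonicity: for every $p>q$, $g_p\ge g_q$, hence $\|g_p(Du(\cdot))\|_{L^p(\Omega)}\ge\|g_q(Du(\cdot))\|_{L^p(\Omega)}$; now the integrand no longer depends on $p$, so the classical convergence $\|h\|_{L^p(\Omega)}\to\|h\|_{L^\infty(\Omega)}$ for $h\in L^\infty(\Omega)$ with $|\Omega|<+\infty$ applies and yields $\liminf_{p\to+\infty}\|g_p(Du(\cdot))\|_{L^p(\Omega)}\ge\underset{x\in\Omega}{\mbox{ess-sup}}\,g_q(Du(x))$. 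It then remains to let $q\to+\infty$: since $g_q(Du(\cdot))\uparrow\widetilde f_\hom(Du(\cdot))$ pointwise, and since for a pointwise non-decreasing sequence of measurable functions the essential suprema increase to the essential supremum of the limit (if $g_q(Du(\cdot))\le L$ a.e.\ for all $q$, then $\widetilde f_\hom(Du(\cdot))\le L$ a.e.), one gets $\underset{x\in\Omega}{\mbox{ess-sup}}\,g_q(Du(x))\uparrow M$. Combining the two bounds gives the claim.

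I do not expect a serious obstacle here; the only point that needs a little care is that the integrand $g_p$ on the left-hand side itself depends on $p$, so the textbook statement ``$L^p\to L^\infty$'' cannot be invoked directly. The monotonicity~\eqref{fhomincreasingsequence} is precisely what allows one to freeze the integrand to a fixed $g_q$ before sending $p\to+\infty$, and then to recover $M$ through a second, monotone limit in $q$; the compatibility of the essential supremum with monotone pointwise limits is the last small ingredient. (For the upper bound an equally good alternative is to note that the $L^p$-norms of the fixed majorant $\widetilde f_\hom(Du(\cdot))\in L^\infty(\Omega)$ tend to its $L^\infty$-norm.)
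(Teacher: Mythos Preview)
Your proof is correct. The upper bound is exactly the paper's argument: $g_p\le\widetilde f_\hom$ gives $\|g_p(Du)\|_{L^p}\le\|\widetilde f_\hom(Du)\|_{L^p}\to M$. For the lower bound, however, you take a genuinely different and more elementary route than the paper. The paper fixes $\varepsilon>0$, considers the superlevel set $E_\varepsilon=\{\widetilde f_\hom(Du)>M-\varepsilon/2\}$, applies Egorov's theorem to upgrade the pointwise convergence $g_p(Du)\to\widetilde f_\hom(Du)$ to uniform convergence off a small set $F_\delta$, and then estimates the integral from below on $E_\varepsilon\setminus F_\delta$. You instead freeze the integrand at a level $q$ via the monotonicity $g_p\ge g_q$ for $p>q$, invoke the classical convergence $\|h\|_{L^p}\to\|h\|_{L^\infty}$ for the fixed $h=g_q(Du)\in L^\infty(\Omega)$, and only then send $q\to+\infty$ using the commutation of $\mathrm{ess\,sup}$ with monotone pointwise limits. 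Your approach avoids Egorov entirely and isolates more cleanly the two uses of monotonicity (once to decouple the $p$-dependence of the integrand from the $L^p$-norm, once to recover $M$); the paper's approach is perhaps closer in spirit to the usual proof that $\|h\|_{L^p}\to\|h\|_{L^\infty}$ and works in one pass. Both are short and rely on the same input from Lemma~\ref{lemma:recoveryseq}.
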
 
\begin{proof}
First, note that since $(f^\hom_p)^{1/p}$ is a non-decreasing function, we have that, for all $Z\in\R^{d\times n}$,
    \begin{equation}
    \notag
    (f^\hom_p(Z))^{1/p}\leq \widetilde{f}_\hom(Z) \qquad\mbox{for any } p>1.
    \end{equation}
This implies that, for fixed $u\in W^{1, \infty}(\Omega;\hspace{0.03cm} \R^d)$, 
     \begin{equation}
     \notag
     \left[\int_{\Omega} f^\hom_p(Du(x))dx\right]^{1/p}\leq\left[\int_{\Omega} (\widetilde{f}_\hom(Du(x)))^pdx\right]^{1/p}= \|\widetilde{f}_\hom(Du(\cdot))\|_{L^p(\Omega)}. 
     \end{equation}
Taking the limit as $p\to+\infty$, we have that 
    \begin{equation}
    \notag
    \lim_{p\to+\infty}\left[\int_{\Omega} f^\hom_p(Du(x))dx\right]^{1/p} \leq 
    \|\widetilde{f}_\hom(Du(\cdot))\|_{L^\infty(\Omega)} =\underset{x\in\Omega}{\mbox{ ess-sup}}\hspace{0.03cm} \widetilde{f}_\hom (Du(x)).
    \end{equation}
Now, we prove the reverse inequality. To that end, fix $\e>0$ and $u\in W^{1, \infty}(\Omega;\hspace{0.03cm}\R^d)$. We define the set $E_\e$ by
   \begin{equation}
   \notag
   E_\e := \left\{x\in\Omega\hspace{0.03cm}:\hspace{0.03cm} \widetilde{f}_\hom(Du(x))> \esssup \widetilde{f}_\hom(Du(x))-{\e\over 2}  \right\}.
   \end{equation}
Note that $E_\e$ has positive measure, i.e. $m_\e:= {\mathcal L}^n(E_\e)>0$.  We consider $\delta$ such that $0<\delta<<m_\e$. 
From \eqref{limfhomp} combined with the Egorov theorem, it follows that there exists a set $F_\delta\subset\Omega$ such that ${\mathcal L}^n(F_\delta)\leq\delta$ and 
     \begin{equation}
     \notag
     \lim_{p\to+\infty} \|(f^\hom_p)^{1/p}(Du(\cdot)) -\widetilde{f}_\hom (Du(\cdot)) \|_{L^\infty(\Omega\setminus F_\delta)}=0.
     \end{equation}
In particular, there exists $p_0=p_0(\e)$ such  $p\geq p_0$ implies that
   \begin{equation}
   (f^\hom_p)^{\color{blue}{1/p}}(Du(x))  - \widetilde{f}_\hom (Du(x))\geq -{\e\over 2}, \qquad \forall x\in \Omega\setminus F_\delta.\notag
   \end{equation}
Note that ${\mathcal L}^n(E_\e\setminus F_\delta)>0$, since $0<\delta<<m_\e$. Moreover, if $x\in E_\e\setminus F_\delta$, then 
    \begin{align}
    (f^\hom_p)^{1\over p} (Du(x))&\geq \widetilde{f}_\hom (Du(x))-{\e\over 2}\notag\\
    &\geq \esssup \hspace{0.1cm} \widetilde{f}_\hom (Du (x)) -\e, \qquad\forall p\geq p_0.\notag
    \end{align}
Let $E_\e^p$ be the set defined by
    \begin{equation}
    \notag
    E_\e^p:=\left\{x\in\Omega\hspace{0.03cm}:\hspace{0.03cm} (f^\hom_p)^{1/p} (Du(x))> \esssup \hspace{0.1cm}\widetilde{f}_\hom (Du (x)) -\e   \right\}.
    \end{equation}
The set $E_\e^p$ contains $E_\e\setminus F_\delta$ which implies that the measure of $E_\e^p$ is positive for any $p\geq p_0$. Hence, we deduce that 
    \begin{align}
    \left(\int_{\Omega} f^\hom_p(Du(x))dx \right)^{1/p} &\geq \left(\int_{E_\e^p} f^\hom_p(Du(x))dx \right)^{1/p}\notag\\
    &\geq\left(\int_{E_\e^p} (\esssup \hspace{0.1cm}\widetilde{f}_\hom (Du(x)) -\e )^pdx \right)^{1/p}\notag\\
    &= {\mathcal L}^n (E^p_\e)^{1/p} \left(\esssup\hspace{0.1cm} \widetilde{f}_\hom (Du(x)) -\e \right)\notag\\
    &\geq {\mathcal L}^n (E_\e\setminus F_\delta)^{1/p} \left(\esssup \hspace{0.1cm}\widetilde{f}_\hom (Du(x)) -\e \right)\notag,
    \end{align} 
for any $p\geq p_0$. This implies that 
    \begin{align}
    \lim_{p\to+\infty} \left(\int_{\Omega} f^\hom_p(Du(x))dx \right)^{1/p} \geq \esssup \hspace{0.1cm}\widetilde{f}_\hom (Du(x)) -\e .\notag
    \end{align}
In view of the arbitrariness of $\e$, we obtain the reverse inequality, as desired.
\end{proof}

In the next proposition, we show that the function $\widetilde{f}_\hom$ \color{blue} is \color{black} strong Morrey quasiconvex (see Definition \ref{sMqcx}).

\begin{prop}
The function $\widetilde{f}_\hom$ is strong Morrey quasiconvex.
\end{prop}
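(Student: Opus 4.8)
The plan is to derive strong Morrey quasiconvexity of $\widetilde f_\hom$ from the same property of the finite--$p$ densities $(f^\hom_p)^{1/p}$, using the information already in Lemma~\ref{lemma:recoveryseq}: the map $p\mapsto (f^\hom_p)^{1/p}$ is non-decreasing and converges pointwise to $\widetilde f_\hom$, so that $(f^\hom_p)^{1/p}\le\widetilde f_\hom$ on $\R^{d\times n}$ for every $p>1$, while for each $Z$ one has $(f^\hom_p)^{1/p}(Z)\uparrow\widetilde f_\hom(Z)$; moreover \eqref{growthconditions} gives $\alpha|Z|\le\widetilde f_\hom(Z)\le 2\beta(1+|Z|)$, so $\widetilde f_\hom$ is finite valued. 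Unlike the scalar case of \cite{BGP04}, where $(f^\hom_p)^{1/p}$ is level convex and level convexity is trivially stable under (monotone) suprema, here strong Morrey quasiconvexity is not obviously preserved by $\sup_p$, which is exactly why the argument must go through the finite--$p$ densities.

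The core step is to show that, for every fixed $p>1$, the function $(f^\hom_p)^{1/p}$ is strong Morrey quasiconvex. Recall (see, e.g., \cite[Chapter 14]{BD}) that $f^\hom_p$ is quasiconvex and inherits the $p$--growth bounds $\alpha^p|Z|^p\le f^\hom_p(Z)\le\gamma(1+|Z|^p)$. Fix $\e>0$, $Z\in\R^{d\times n}$, $K>0$, take $\varphi\in W^{1,\infty}(Y;\R^d)$ with $\|D\varphi\|_{L^\infty}\le K$ and $\max_{x\in\partial Y}|\varphi(x)|\le\delta$ for some $\delta\in(0,\tfrac14)$ to be chosen, and introduce a cut-off $\theta\in C^\infty_c(Y)$ with $0\le\theta\le1$, $\theta\equiv1$ on $Y_r^-$ and $|D\theta|\le c_0/r$, where $r:=\sqrt\delta$. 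Setting $\widetilde\varphi:=\theta\varphi\in W^{1,\infty}_0(Y;\R^d)$ and using $|\varphi(x)|\le\delta+Kr$ on $Y\setminus Y_r^-$ one checks $\|D\widetilde\varphi\|_{L^\infty}\le K':=(1+c_0)K+c_0$, independent of $\delta$. Then quasiconvexity of $f^\hom_p$, together with $D\widetilde\varphi=D\varphi$ on $Y_r^-$, $|Z+D\widetilde\varphi|\le|Z|+K'$ on $Y\setminus Y_r^-$ and $|Y\setminus Y_r^-|\le 2nr$, gives
\[
f^\hom_p(Z)\le\int_Y f^\hom_p(Z+D\widetilde\varphi)\,dx\le\underset{x\in Y}{\mbox{ess-sup}}\; f^\hom_p(Z+D\varphi)+2n\,\gamma\big(1+(|Z|+K')^p\big)\sqrt\delta,
\]
and hence, taking $p$-th roots and using the subadditivity of $t\mapsto t^{1/p}$ and that it commutes with the essential supremum,
\[
(f^\hom_p)^{1/p}(Z)\le\underset{x\in Y}{\mbox{ess-sup}}\;(f^\hom_p)^{1/p}(Z+D\varphi)+C(Z,K,p)\,\delta^{1/(2p)},
\]
with $C(Z,K,p):=\big(2n\gamma(1+(|Z|+K')^p)\big)^{1/p}$ independent of $\delta$. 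Since $p$ is frozen, $\delta\mapsto C(Z,K,p)\delta^{1/(2p)}$ tends to $0$ as $\delta\to0$, so we may pick $\delta=\delta(\e,K,Z,p)$ making the last term $\le\e$, which is precisely the inequality of Definition~\ref{sMqcx} for $(f^\hom_p)^{1/p}$. I expect this to be the main difficulty, and it explains why a one-shot argument directly on $\widetilde f_\hom$ fails: the boundary-shell term can be absorbed only for fixed $p$, since letting $p\to+\infty$ at fixed $\delta$ turns $\delta^{1/(2p)}$ into $1$ and leaves an uncontrolled contribution of order $|Z|+K$; the cut-off must therefore be invoked only after $p$ has been chosen.

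Finally I would combine the two facts. Given $\e>0$, $Z\in\R^{d\times n}$, $K>0$, use the pointwise convergence in Lemma~\ref{lemma:recoveryseq} to fix $p$ (depending on $\e$ and $Z$) with $\widetilde f_\hom(Z)\le(f^\hom_p)^{1/p}(Z)+\tfrac\e2$, and apply the strong Morrey quasiconvexity of $(f^\hom_p)^{1/p}$ established above with parameter $\tfrac\e2$ to obtain $\delta>0$ such that, for every $\varphi\in W^{1,\infty}(Y;\R^d)$ with $\|D\varphi\|_{L^\infty}\le K$ and $\max_{x\in\partial Y}|\varphi(x)|\le\delta$,
\[
(f^\hom_p)^{1/p}(Z)\le\underset{x\in Y}{\mbox{ess-sup}}\;(f^\hom_p)^{1/p}(Z+D\varphi)+\tfrac\e2\le\underset{x\in Y}{\mbox{ess-sup}}\;\widetilde f_\hom(Z+D\varphi)+\tfrac\e2,
\]
the last inequality because $(f^\hom_p)^{1/p}\le\widetilde f_\hom$. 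Chaining the two estimates yields $\widetilde f_\hom(Z)\le\underset{x\in Y}{\mbox{ess-sup}}\;\widetilde f_\hom(Z+D\varphi)+\e$, i.e., $\widetilde f_\hom$ is strong Morrey quasiconvex.
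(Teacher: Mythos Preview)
Your proof is correct and follows the same two-step strategy as the paper: first show that each $(f^\hom_p)^{1/p}$ is strong Morrey quasiconvex, then pass to the pointwise monotone limit $\widetilde f_\hom=\sup_{p}(f^\hom_p)^{1/p}$. The only difference is one of packaging: the paper outsources both steps to the literature---invoking \cite[Proposition~2.4]{BJW04} to upgrade the quasiconvexity of $f^\hom_p$ to strong Morrey quasiconvexity (and then applying subadditivity of $t\mapsto t^{1/p}$ with parameter $\e^p$) and \cite[Proposition~5.2]{PZ20} for the stability of strong Morrey quasiconvexity under suprema---whereas you reprove these two facts in a self-contained way via the explicit cut-off construction and the $\e/2$ splitting argument.
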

\begin{proof}
First, note that $f^\hom_p: \R^{d\times n}\to [0,+\infty)$ is quasiconvex (or Morrey convex, according to \cite[Definition 1.1]{BJW04}) since it is the energy density of $\Gamma$-limit of $F_p^\hom$ (e.g., see \cite[Theorem 14.5]{BD}). In view of \cite[Proposition 2.4]{BJW04}, $f^\hom_p$ is also strong Morrey quasiconvex. 
Now, we show that for any $p>1$, $(f^\hom_p)^{1/p}$ is strong Morrey quasiconvex. To that end, fix $\e>0$ and $k>0$. Since $f^\hom_p$ is strong Morrey quasiconvex, there exists $\delta=\delta(\e, k)>0$ such that if $\varphi\in W^{1,\infty}(\Omega; \hspace{0.03cm}\R^d)$ satisfies $\|D\varphi\|_{L^\infty(Y; \hspace{0.03cm} \R^{d\times n})}\leq k$ and $\max_{x\in\partial Q}|\varphi(x)|\leq \delta$, then
    \begin{equation}
    \label{ineq2}
    f^\hom_p(Z) \leq \underset{x\in Y}{\mbox{ess-sup}}\hspace{0.1cm}f^\hom_p(Z + D\varphi(x)) + \e^p.
    \end{equation}
Recall that for any $p>1$, the function $h(x):= x^{1/p}$ is subadditive since it is concave function  with $h(0)\geq 0$.  Hence, from  \eqref{ineq2}, it follows that
   \begin{align}
   (f^\hom_p(Z))^{1/p}&\leq \left(\underset{x\in Y}{\mbox{ess-sup}}\hspace{0.1cm}f^\hom_p(Z + D\varphi(x)) + \e^p \right)^{1/p}\notag\\
   &\leq \left(\underset{x\in Y}{\mbox{ess-sup}}\hspace{0.1cm}f^\hom_p(Z + D\varphi(x))\right)^{1/ p} + \e\notag\\
   &= \underset{x\in Y}{\mbox{ess-sup}}\hspace{0.1cm}(f^\hom_p)^{1/p}(Z + D\varphi(x)) + \e.\label{ineq3}
   \end{align}

Finally, inequality \eqref{ineq3} implies that $(f^\hom_p)^{1/p}$ is strong Morrey quasiconvex.\par 
To conclude the proof, recall that $(f^\hom_p)^{1/p}$ is a non-decreasing function. Hence, for any $Z\in\R^{d\times n}$,
   \begin{equation}
   \notag
   \widetilde{f}_\hom (Z) = \lim_{p\to+\infty}  (f^\hom_p(Z))^{1/p} = \sup_{p\geq 1} (f^\hom_p(Z))^{1/p}.
   \end{equation}
Moreover, due to the fact that for any $p\geq 1$, $(f^\hom_p(Z))^{1/p}$ is strong Morrey quasiconvex, we may apply \cite[Proposition 5.2]{PZ20} to conclude that $\widetilde{f}_\hom$ is strong Morrey quasiconvex, as desired.
\end{proof}

The next result provides the second $\Gamma$-limit as $p\to+\infty$ of the functionals $\{F_{p, \e}\}_{p,\e}$ in \eqref{def:Functionalpeps}, after having taken the one with respect to $\varepsilon \to 0$, cf. \eqref{GammaLp} and \eqref{def:Fhomp}, and it is   a generalization of \cite[Theorem 3.3]{BGP04} in the vectorial case.

\begin{thm}

\label{Theorem3.6}

Let $f:\R^n\times \R^{d\times n}\to [0,+\infty)$ be a Borel function, $1$-periodic in the first variable and satisfying growth conditions \eqref{growthconditions}. Let $\Omega$ be a bounded, open set of $\R^n$ with Lipschitz boundary. For any $p>1$, let $F^\hom_p$ be the functional defined by \eqref{def:Fhomp}, and $\widetilde{f}_\hom$ be the function in \eqref{characterizationftildehom}. Then,
   \begin{equation}
   \notag   \Gamma(L^\infty)\mbox{-}\lim_{p\to\infty} F_p^{\rm hom}(u)= F^\hom(u):=  \underset{x\in \Omega}{\mbox{{\rm ess-sup}}}\hspace{0.1cm} \widetilde{f}_\hom (Du(x)),
   \end{equation}for every $u \in W^{1, \infty}(\Omega; \mathbb{R}^d)$.
\end{thm}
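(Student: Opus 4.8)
The plan is to verify the two inequalities of the sequential characterization of $\Gamma(L^\infty)$-convergence (Proposition 8.1 of \cite{DM93}), working entirely with $L^\infty$ strong convergence of sequences $\{u_p\}$ with $p\to+\infty$. The essential analytic input has already been prepared: Lemma \ref{lemma:convergenceofhomdensity} gives the pointwise-in-$u$ convergence $\left(\int_\Omega f^\hom_p(Du)\,dx\right)^{1/p}\to \esssup\widetilde f_\hom(Du)$ for fixed $u\in W^{1,\infty}(\Omega;\R^d)$, and the monotonicity $(f^\hom_p)^{1/p}\nearrow\widetilde f_\hom$ proved in Lemma \ref{lemma:recoveryseq} is what makes the limsup inequality essentially trivial. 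So the real content is the liminf inequality.

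\textbf{The $\Gamma$-limsup inequality.} For fixed $u\in W^{1,\infty}(\Omega;\R^d)$ I take the constant recovery sequence $u_p\equiv u$. If $u\in W^{1,\infty}\subset W^{1,p}$ for every $p$, so $F^\hom_p(u)=\left(\int_\Omega f^\hom_p(Du)\,dx\right)^{1/p}$, and Lemma \ref{lemma:convergenceofhomdensity} gives exactly $\lim_{p\to+\infty}F^\hom_p(u)=\esssup_{x\in\Omega}\widetilde f_\hom(Du(x))=F^\hom(u)$. Since $u_p\to u$ in $L^\infty$ trivially, this produces a recovery sequence and settles (ii).

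\textbf{The $\Gamma$-liminf inequality.} Let $u_p\to u$ in $L^\infty(\Omega;\R^d)$; I must show $F^\hom(u)\le\liminf_{p\to+\infty}F^\hom_p(u_p)$. Assume the liminf is finite (else nothing to prove) and pass to a subsequence realizing it along which $F^\hom_p(u_p)\le C$; this forces $u_p\in W^{1,p}(\Omega;\R^d)$ and, via the coercivity $\alpha^p|Z|^p\le f^\hom_p(Z)$ (inherited from \eqref{growthconditions}), a uniform bound $\|Du_p\|_{L^p(\Omega)}\le C'$, whence for any fixed $q>n$ one gets $\|Du_p\|_{L^q(\Omega)}\le C'|\Omega|^{1/q-1/p}\le C''$ by Hölder; combined with the $L^\infty$-convergence of $u_p$ to $u$ and lower semicontinuity of the $L^q$-norm this gives $u\in W^{1,q}(\Omega;\R^d)$ for all $q$, hence $u\in W^{1,\infty}(\Omega;\R^d)$, and $u_p\rightharpoonup u$ weakly in $W^{1,q}$. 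Now the monotonicity from Lemma \ref{lemma:recoveryseq}: fix any $r>n$; for $p\ge r$ one has $(f^\hom_p)^{1/p}\ge (f^\hom_r)^{1/r}$ pointwise, hence $F^\hom_p(u_p)=\|(f^\hom_p)^{1/p}(Du_p)\|_{L^p(\Omega)}\ge \||\Omega|^{-1/p}\| \cdots$ — more carefully, $\left(\int_\Omega f^\hom_p(Du_p)\,dx\right)^{1/p}\ge \left(\int_\Omega (f^\hom_r)^{p/r}(Du_p)\,dx\right)^{1/p}=\|(f^\hom_r)^{1/r}(Du_p)\|_{L^p(\Omega)}\ge |\Omega|^{1/p-1/r}\|(f^\hom_r)^{1/r}(Du_p)\|_{L^r(\Omega)}$. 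Let $p\to+\infty$: the prefactor tends to $|\Omega|^{-1/r}$; and since $(f^\hom_r)^{1/r}$ is quasiconvex (being the $r$-th root of the quasiconvex $\Gamma$-limit density $f^\hom_r$, cf. \cite[Theorem 14.5]{BD}) with $r$-growth, the functional $v\mapsto\int_\Omega (f^\hom_r)^{p/r}(Dv)$ is weakly lower semicontinuous in $W^{1,r}$ — wait, I need lower semicontinuity of $v\mapsto\|(f^\hom_r)^{1/r}(Dv)\|_{L^r(\Omega)}$ under weak $W^{1,r}$ convergence, which holds because $(f^\hom_r)^{p/r}\cdot$, no: simply $v\mapsto\int_\Omega f^\hom_r(Dv)\,dx$ is weakly $W^{1,r}$-l.s.c. by quasiconvexity, but I have the $L^r$-norm of the $r$-th root, i.e. $\left(\int (f^\hom_r)(Dv)\right)^{1/r}$ up to the power, hmm — actually $\|(f^\hom_r)^{1/r}(Dv)\|_{L^r(\Omega)}^r=\int_\Omega f^\hom_r(Dv)$, so the weak l.s.c. of the integral and continuity of $t\mapsto t^{1/r}$ give weak l.s.c. of the $L^r$-norm. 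Therefore $\liminf_{p\to+\infty}F^\hom_p(u_p)\ge |\Omega|^{-1/r}\|(f^\hom_r)^{1/r}(Du)\|_{L^r(\Omega)}\cdot$ — here I use $u_p\rightharpoonup u$ in $W^{1,r}$ — $=\left(\frac{1}{|\Omega|}\int_\Omega f^\hom_r(Du)\,dx\right)^{1/r}\ge\left(\frac{1}{|\Omega|}\int_\Omega \inf_\Omega\cdots\right)$, rather: by monotonicity $f^\hom_r(Du(x))\ge$ nothing useful pointwise, so instead let $r\to+\infty$ directly: $\left(\frac{1}{|\Omega|}\int_\Omega f^\hom_r(Du)\,dx\right)^{1/r}\to \esssup_{x\in\Omega}\widetilde f_\hom(Du(x))$ by Lemma \ref{lemma:convergenceofhomdensity} (the $|\Omega|^{-1/r}$ factor disappears in the limit). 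Thus $\liminf_{p\to+\infty}F^\hom_p(u_p)\ge F^\hom(u)$, completing the proof.

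\textbf{Main obstacle.} The delicate point is the interchange of limits hidden in the liminf inequality: one cannot pass to $p=\infty$ before extracting the compactness, so the argument must go through a fixed intermediate exponent $r>n$, use quasiconvexity/weak lower semicontinuity of $\int_\Omega f^\hom_r(D\cdot)\,dx$ at that level to descend to the $W^{1,r}$-weak limit $u$, and only then let $r\to+\infty$ invoking the monotone convergence $(f^\hom_r)^{1/r}\nearrow\widetilde f_\hom$ and Lemma \ref{lemma:convergenceofhomdensity}. Verifying that the weak $W^{1,r}$ limit of $\{u_p\}$ is the prescribed $L^\infty$-limit $u$ and that it lies in $W^{1,\infty}$ is routine (Poincaré–Wirtinger plus lower semicontinuity of norms, exactly as in the proof of Lemma \ref{lemma:recoveryseq}), but it must be stated carefully because $\Omega$ is merely Lipschitz. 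I would also remark that the $\Gamma$-limit is automatically lower semicontinuous on $L^\infty$, consistent with $\widetilde f_\hom$ being strong Morrey quasiconvex as established in the preceding proposition.
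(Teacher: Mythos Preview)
Your argument is correct. The limsup via the constant sequence and Lemma~\ref{lemma:convergenceofhomdensity} is exactly right, and the liminf via a fixed intermediate exponent $r>n$, weak $W^{1,r}$-compactness from coercivity, quasiconvexity/weak lower semicontinuity of $\int_\Omega f^\hom_r(D\cdot)$, and then $r\to\infty$ is sound (the $|\Omega|^{1/p-1/r}$ factors are harmless in the limit).

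The paper takes a shorter, more abstract route. It introduces the normalized functionals $H^\hom_p(u)=\bigl(\frac{1}{\mathcal L^n(\Omega)}\int_\Omega f^\hom_p(Du)\,dx\bigr)^{1/p}$, observes that each $H^\hom_p$ is $L^\infty$-lower semicontinuous (being, up to a continuous transformation, the $\Gamma(L^p)$-limit \eqref{def:Fhomp}), checks via Jensen and the monotonicity of $(f^\hom_p)^{1/p}$ that $p\mapsto H^\hom_p$ is increasing, and then invokes the general fact \cite[Remark~5.5]{DM93} that an increasing family of lower semicontinuous functionals $\Gamma$-converges to its pointwise supremum; Lemma~\ref{lemma:convergenceofhomdensity} identifies that supremum as $F^\hom$. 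Passing from $H^\hom_p$ to $F^\hom_p$ is immediate since $F^\hom_p=|\Omega|^{1/p}H^\hom_p$ and $|\Omega|^{1/p}\to 1$. Your proof is essentially an explicit unpacking of \cite[Remark~5.5]{DM93} in this particular setting, with the added bonus that you verify directly that finiteness of the liminf forces $u\in W^{1,\infty}$; the paper's approach buys brevity by outsourcing the liminf inequality entirely to the abstract monotone-$\Gamma$-convergence principle.
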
    
\begin{proof}
For any $p>1$, by \eqref{GammaLp} and \cite[Proposition 6.16]{DM93}, the functional 
   \begin{equation*}
   H^\hom_p(u)=\left({1\over {\mathcal L^n(\Omega) }}\int_{\Omega} f^\hom_p(Du(x)) dx\right)^{1/p}
   \end{equation*}
is lower semicontinuous in $W^{1,p}$ with respect to $L^p$ topology, hence it \color{blue}results \color{black} to be lower semicontinuous in $W^{1, \infty}$ with respect to the $L^\infty$ topology. For the sake of exposition, we assume in the rest of the proof that $\mathcal L^n(\Omega)=1$. Moreover, $H^\hom_p$ is an increasing family, i.e., for $p_1\leq p_2$, $H^\hom_{p_1}\leq  H^\hom_{p_2}$. Indeed, using the Jensen inequality combined with the fact that $(f^\hom_p)^{\color{blue}{1/p}}$ is an increasing sequence {\color{blue} with respect to $p>1$}, we deduce that, for $p_1\leq p_2$,
    \begin{align}
    \left(\int_{\Omega}f^\hom_{p_1}(Du(x))dx\right)^{p_2/p_1}&\leq
    \int_{\Omega}(f^\hom_{p_1}(Du(x)))^{p_2/p_1}dx\notag\\
    &\leq \int_{\Omega}f^\hom_{p_2}(Du(x))dx\notag,
    \end{align}  
which implies that $H^\hom_{p_1}(u)\leq H^\hom_{p_2}(u)$. 
\par In view of Lemma \ref{lemma:convergenceofhomdensity}, it follows that, for any $u\in W^{1,\infty}(\Omega;\hspace{0.03cm}\R^d)$,
    \begin{equation}
    \notag
    \lim_{p\to+\infty}H^\hom_p(u)=\underset{x\in \Omega}{\mbox{ess-sup}}\hspace{0.03cm} \widetilde{f}_\hom (Du(x)). 
    \end{equation}
In other words, $H^\hom_p$ converges pointwise to $F^\hom$, as $p\to+\infty$. This combined with the lower semicontinuity of $H^\hom_p$ allows us to apply \cite[Remark 5.5]{DM93}, concluding that  $H^\hom_p$ $\Gamma$-converges to $F^\hom$ with respect to $L^\infty$ topology.
\par To conclude the proof, it remains to prove that $F^\hom_p$ $\Gamma$-converges to  $F^\hom$ with respect to $L^\infty$ topology. This is a consequence of $\Gamma$-convergence of $H^\hom_p$ combined with the equality 
    \begin{equation}
    \notag
    \lim_{p\to+\infty} H^\hom_p(u)= \lim_{p\to+\infty} F^\hom_p(u).
    \end{equation}
This concludes the proof.
\end{proof}

\begin{remark}\rm
The previous result along with the homogenization result,  given by \eqref{GammaLp} and \eqref{def:Fhomp},  for the functionals $\{F_{p,\e}\}_{p,\e}$ in \eqref{def:Functionalpeps}  yields to 
    \begin{equation}
    \notag
   \Gamma(L^\infty)\mbox{-}\lim_{p\to +\infty}\left(\Gamma(L^p)\mbox{-}\lim_{\e\to 0} F_{p.\e}(u)\right)=\underset{x\in \Omega}{\mbox{ess-sup}}\hspace{0.03cm} \widetilde{f}_\hom (Du(x)),
    \end{equation}
    for every $u \in W^{1, \infty}(\Omega; \mathbb{R}^d)$.
\end{remark}

\section{The cell formula}
\color{black}

\label{quattro}

The effective energy density  $\widetilde{f}_\hom$ in \eqref{characterizationftildehom} is characterized through an asymptotic formula. In this section, we show that if the function $f(x,\cdot)$ is level convex and upper semicontinuous for every $x\in \R^n$, {\color{blue}the} asymptotic formula turns into a cell formula. First, we recall the definition of level convexity.

  \begin{definition}
     Let $\Omega \subseteq \mathbb R^n$ be an open set. A 
     function $f:\R^{d\times n} \to \R$ is level convex if for any $t\in (0,1)$ and for any $Z_1, Z_2\in \R^{d\times n}$, it holds
          \begin{equation}
              \notag
              f(t Z_1+ (1-t)Z_2)\leq \color{blue}{\rm max}\{f( Z_1), f(Z_2)\}\color{black}.
          \end{equation}
  \end{definition}
  The following result will be useful in the rest of the paper.
  
\begin{prop}\label{lcus}
 Let $\Omega \subseteq \mathbb R^n$ be an open set.
Let $f:\Omega\times \R^{d\times n}\to [0,+\infty)$ be a $\mathcal L(\mathbb R^n)\otimes \mathcal B(\mathbb R^{d\times n})$-measurable function, 
satisfying growth conditions \eqref{growthconditions}. Assume that $f(x, \cdot)$ is level convex and upper semicontinuous for a.e. $x\in\Omega$. Then, $f_\infty$ defined by \eqref{def:finfinity} \color{blue} is \color{black} level convex.
\end{prop}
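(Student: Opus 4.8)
The plan is to establish the sharper statement that $f_\infty(x,\cdot)$ coincides, for a.e.\ $x$, with the lower semicontinuous envelope $f^{\rm ls}(x,\cdot)$ of $f(x,\cdot)$, which is automatically level convex. Indeed, since $f(x,\cdot)$ is level convex its sublevel sets $\{f(x,\cdot)\le\sigma\}$ are convex, so each open sublevel set $\{f(x,\cdot)<s\}=\bigcup_{\sigma<s}\{f(x,\cdot)\le\sigma\}$ is an increasing union of convex sets and therefore convex; recalling that $f^{\rm ls}(x,Z)=\liminf_{W\to Z}f(x,W)$, one checks that $\{f^{\rm ls}(x,\cdot)\le t\}=\bigcap_{s>t}\overline{\{f(x,\cdot)<s\}}$ is an intersection of closed convex sets, hence convex, so $f^{\rm ls}(x,\cdot)$ is level convex; it also inherits the bounds $\alpha|Z|\le f^{\rm ls}(x,Z)\le f(x,Z)\le\beta(|Z|+1)$ because $Z\mapsto\alpha|Z|$ is continuous.

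The inequality $f_\infty(x,\cdot)\le f^{\rm ls}(x,\cdot)$ is straightforward. For each $k$ the relaxed density $\widetilde{f^k}(x,\cdot)$ is continuous (being Carath\'eodory) and satisfies $\widetilde{f^k}(x,\cdot)\le f^k(x,\cdot)$ (from the relaxation together with the upper semicontinuity of $f^k(x,\cdot)$); being lower semicontinuous and below $f^k(x,\cdot)$, it lies below the largest such function, namely $(f^k)^{\rm ls}(x,\cdot)=(f^{\rm ls}(x,\cdot))^k$ (the lower semicontinuous envelope commutes with the increasing continuous bijection $t\mapsto t^k$ of $[0,+\infty)$). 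Taking $1/k$-th powers and the supremum over $k$ in \eqref{def:finfinity} gives the bound.

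For the converse the idea is to exploit the \emph{linear} coercivity in \eqref{growthconditions}. Since the convex envelope $\mathrm{co}(f^k(x,\cdot))$ is convex and $\le f^k(x,\cdot)$, the functional $u\mapsto\int\mathrm{co}(f^k(x,\cdot))(Du)$ is sequentially weakly lower semicontinuous and dominated by $u\mapsto\int f^k(x,Du)$, whence $\widetilde{f^k}(x,\cdot)\ge\mathrm{co}(f^k(x,\cdot))=\mathrm{co}\bigl((f^{\rm ls}(x,\cdot))^k\bigr)$ (a convex minorant of $f^k(x,\cdot)$ is a minorant of its lower semicontinuous envelope, and conversely). So it suffices to prove that, for $g:=f^{\rm ls}(x,\cdot)$ — level convex, lower semicontinuous, with $g\ge\alpha|\cdot|$ —
\[
\liminf_{k\to\infty}\bigl(\mathrm{co}(g^k)(Z)\bigr)^{1/k}\ \ge\ g(Z)\qquad\text{for every }Z\in\R^{d\times n}.
\]
Fix $Z$ with $g(Z)>0$ (otherwise there is nothing to prove) and $\varepsilon\in(0,g(Z))$, and let $Z=\sum_i\mu_iZ_i$ be any finite convex combination; put $E:=\{i:g(Z_i)>g(Z)-\varepsilon\}$ and $\delta:=\sum_{i\in E}\mu_i$. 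The ``cheap'' indices $i\notin E$ satisfy, by coercivity, $Z_i\in S:=\{g\le g(Z)-\varepsilon\}\subset\{|W|\le(g(Z)-\varepsilon)/\alpha\}$, a bounded convex set, and by level convexity their barycenter $\bar Z$ again satisfies $g(\bar Z)\le g(Z)-\varepsilon$, so $\bar Z$ stays at a fixed positive distance $r=r(Z,\varepsilon)$ from $Z$ by lower semicontinuity of $g$ at $Z$; writing $Z=\delta\,W_E+(1-\delta)\bar Z$ with $W_E$ the barycenter of the ``expensive'' points, a short computation then yields $\sum_{i\in E}\mu_i|Z_i|\ge|\delta W_E|\ge c(Z,\varepsilon)>0$, uniformly in $k$ and in the chosen combination. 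Using $g(Z_i)>g(Z)-\varepsilon$ on $E$, the coercivity $g\ge\alpha|\cdot|$ and Jensen's inequality for the probability measure $(\mu_i/\delta)_{i\in E}$,
\[
\sum_i\mu_i\,g(Z_i)^k\ \ge\ \max\Bigl\{\,\delta\,(g(Z)-\varepsilon)^k,\ \alpha^k\,c(Z,\varepsilon)^k\,\delta^{1-k}\Bigr\}\ \ge\ \alpha\,c(Z,\varepsilon)\,(g(Z)-\varepsilon)^{k-1},
\]
the last step by minimizing over $\delta\in(0,1]$ (the two terms coincide at $\delta=\alpha c(Z,\varepsilon)/(g(Z)-\varepsilon)$, independently of $k$, and the bound only improves when that value is $\ge1$). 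Passing to the infimum over convex combinations gives $\mathrm{co}(g^k)(Z)\ge\alpha c(Z,\varepsilon)(g(Z)-\varepsilon)^{k-1}$, hence $\liminf_k(\mathrm{co}(g^k)(Z))^{1/k}\ge g(Z)-\varepsilon$, and $\varepsilon\downarrow0$ concludes. Combining the two inequalities, $f_\infty(x,\cdot)=f^{\rm ls}(x,\cdot)$, which is level convex.

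The delicate point — and the only one that is not routine — is the uniform lower bound $\mathrm{co}(g^k)(Z)\ge\alpha c(Z,\varepsilon)(g(Z)-\varepsilon)^{k-1}$, i.e.\ the fact that the convex-combination cost does not decay geometrically in $k$; this is exactly the place where the \emph{linear} coercivity in \eqref{growthconditions} enters essentially (the statement would fail for a genuinely sublinear density), and it is here that care is needed to make the ``short computation'' above rigorous, in particular when $|Z|$ is small compared with $g(Z)/\alpha$ — one combines the boundedness of $S$ (coercivity) with the fact that barycenters of cheap points cannot approach $Z$ (lower semicontinuity), possibly via a separation argument. The remaining ingredients are standard: that $\widetilde{f^k}$ is Carath\'eodory with $\mathrm{co}(f^k(x,\cdot))\le\widetilde{f^k}(x,\cdot)\le f^k(x,\cdot)$, and that the lower semicontinuous envelope commutes with $t\mapsto t^k$.
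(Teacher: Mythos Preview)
Your proposal targets the same identity $f_\infty(x,\cdot)=f^{\rm ls}(x,\cdot)$ that the paper uses, but whereas the paper simply cites \cite[Remark~5.2]{PZ20} and \cite[Proposition~2.3]{RZ} for this, you attempt a direct proof. The outline is sound, and the first half---level convexity of $f^{\rm ls}(x,\cdot)$, the inequality $f_\infty\le f^{\rm ls}$ via upper semicontinuity, and the reduction $\widetilde{f^k}\ge\mathrm{co}(f^k)=\mathrm{co}((f^{\rm ls})^k)$---is correct.

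The gap is in the lower bound for $\mathrm{co}(g^k)(Z)$. Your claim that $\sum_{i\in E}\mu_i|Z_i|\ge|\delta W_E|\ge c(Z,\varepsilon)>0$ \emph{uniformly in the convex combination} is false: for $Z=0$ with $g(0)>t:=g(Z)-\varepsilon$, take $0=\tfrac12(\eta v)+\tfrac12(-\eta v)$ with $\eta$ small enough that both points are expensive (possible by lower semicontinuity of $g$ at $0$); then $\delta=1$ and $\sum_{i\in E}\mu_i|Z_i|=\eta\to0$. Since your $c$ in fact depends on the combination, you cannot afterwards minimize $\max\{\delta t^k,\alpha^kc^k\delta^{1-k}\}$ over $\delta$ with $c$ held fixed. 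You correctly flag this step as delicate, but the suggested ``separation argument'' remains only a hint. A clean repair is to split on $\delta$: setting $R=t/\alpha$ and $r=\mathrm{dist}(Z,\{g\le t\})>0$, the identity $Z-\bar Z=\delta(W_E-\bar Z)$ gives $|\delta W_E|\ge r-\delta R$; hence for $0<\delta\le r/(2R)$ your Jensen bound yields $\sum_i\mu_i g(Z_i)^k\ge\alpha^k\delta^{1-k}(r/2)^k$, while for $\delta\ge\min\{1,r/(2R)\}$ the trivial bound gives $\sum_i\mu_i g(Z_i)^k\ge\tfrac{r}{2R}\,t^k$ (or $t^k$). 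Both lower bounds have $k$-th root tending to $t=\alpha R$, and letting $\varepsilon\downarrow0$ completes the argument.
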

\begin{proof}
The proof follows from \cite[Remark 5.2]{PZ20}. Indeed, we have that 
    \begin{equation}
        \label{finftyQinfty}
        f_\infty(x, \cdot) = Q_\infty f (x, \cdot) = f^{\rm ls}(x,\cdot) \quad \mbox{for a.e. } x\in\Omega,
    \end{equation}
and $f^{\rm ls}(x, \cdot)$ is level convex, see \cite[(i), Proposition 2.3]{RZ}.
\end{proof}

\color{black}
\begin{remark}\rm
    \label{cexButtazzo}
     Note that, in the above proposition, it is not possible to drop the upper semicontinuity assumption in order to get the equality in \eqref{finftyQinfty}. Indeed arguing as in \cite[Example 4.4.6]{Buttazzo} without upper semicontinuity it is possible to show that the  sequentially weakly $W^{1,p}$ lower semicontinuous envelope of the functional $G(u):= \int_\Omega  f^p(x,D u) dx$, where $$f^p(x,Z):=\left\{\begin{array}{ll}
     |Z|^p & \hbox{ if } Z= (x_n,0,\dots, 0),\\
     &\\
     (1+|Z|)^p &\hbox{ otherwise},
     \end{array}
     \right.$$
     is expressed by $\int_\Omega \widetilde{f^p}(x,D u)dx$, where $\widetilde{f^p}(x,Z):= (1+|Z|)^p > (f^p)^{\ast \ast}(x,Z)= Q (f^p)(x,Z)=|Z|^p$, consequently for this same function $Q_\infty f(x,Z)= |Z|< f_\infty(x,Z)=1+ |Z|$.
\end{remark}

The next proposition provides sufficient conditions to ensure that the asymptotic formula \eqref{characterizationftildehom} turns into a single cell formula.

\begin{prop}
Let $f:\R^n\times \R^{d\times n}\to [0,+\infty)$ be a Borel function, $1$-periodic in the first variable satisfying growth conditions \eqref{growthconditions}. Assume that $f(x, \cdot)$ is level convex 
for every $x\in\R^n$. Then, asymptotic formula \eqref{characterizationftildehom} is reduced to the following cell problem
    \begin{equation}
    \label{cellformula}
        \widetilde{f}_\hom(Z) = \inf\left\{\underset{x\in Y}{\mbox{{\rm ess}-{\rm sup}}}\hspace{0.1cm} 
 f_\infty(x, Z+Du(x))\hspace{0.03cm}:\hspace{0.03cm} u\in W^{1,\infty}_{\#} (Y; \hspace{0.03cm} \R^d)\right\},
    \end{equation}
for all $Z\in \R^{d\times n}$.
\end{prop}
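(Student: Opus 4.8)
The plan is to establish the two inequalities separately. The bound $\widetilde{f}_\hom(Z)\le \inf\{\underset{x\in Y}{\mbox{ess-sup}}\, f_\infty(x,Z+Du(x)):u\in W^{1,\infty}_{\#}(Y;\R^d)\}$ is immediate from \eqref{characterizationftildehom}, being simply the $j=1$ term in the outer infimum there. Hence everything reduces to the reverse inequality: for every $j\in\N$ and every $u\in W^{1,\infty}_{\#}(jY;\R^d)$ one must exhibit $v\in W^{1,\infty}_{\#}(Y;\R^d)$ with
\[
\underset{x\in Y}{\mbox{ess-sup}}\, f_\infty(x,Z+Dv(x))\le \underset{x\in jY}{\mbox{ess-sup}}\, f_\infty(x,Z+Du(x)).
\]

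The function $v$ will be obtained by folding $u$. Regarding $u$ as its $jY$-periodic extension in $W^{1,\infty}_{\loc}(\R^n;\R^d)$ and writing $I_j:=\{0,1,\dots,j-1\}^n$, set $v(x):=\frac{1}{j^n}\sum_{i\in I_j}u(x+i)$ for $x\in\R^n$. First I would record the elementary facts that $v$ is Lipschitz, so $v\in W^{1,\infty}_{\loc}(\R^n;\R^d)$ with $Dv(x)=\frac{1}{j^n}\sum_{i\in I_j}Du(x+i)$, and that $v$ is $Y$-periodic: adding a coordinate vector $e_\ell$ to $x$ merely permutes the summands once the $jY$-periodicity of $u$ is invoked, so $v(x+e_\ell)=v(x)$. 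Consequently the restriction of $v$ to $Y$ belongs to $W^{1,\infty}_{\#}(Y;\R^d)$.

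Next, for a.e. $x\in Y$ the matrix $Z+Dv(x)$ is the arithmetic mean of the $j^n$ matrices $\{Z+Du(x+i)\}_{i\in I_j}$, so the level convexity of $f_\infty(x,\cdot)$ — which follows from Proposition \ref{lcus} under the assumed properties of $f(x,\cdot)$ — gives $f_\infty(x,Z+Dv(x))\le \max_{i\in I_j} f_\infty(x,Z+Du(x+i))$. Since each $i\in I_j$ is an integer vector and $f$, hence $f^k$, hence its relaxation $\widetilde{f^k}$, and therefore $f_\infty$, are $1$-periodic in the first variable, one has $f_\infty(x,Z+Du(x+i))=f_\infty(x+i,Z+Du(x+i))$; as $x\mapsto x+i$ is a measure-preserving bijection of $Y$ onto $i+Y\subset jY$, this last quantity is $\le \underset{y\in jY}{\mbox{ess-sup}}\, f_\infty(y,Z+Du(y))$ for a.e. $x\in Y$. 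Intersecting the finitely many full-measure sets indexed by $i\in I_j$ and then taking the essential supremum over $x\in Y$ yields the displayed inequality; passing to the infimum over $u\in W^{1,\infty}_{\#}(jY;\R^d)$ and then over $j\in\N$ gives the claim, in view of \eqref{characterizationftildehom} and \eqref{cellformula}.

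The computations are routine throughout: the Lipschitz bound and the periodicity of $v$ are straightforward, and the only structural ingredient is the level convexity of $f_\infty$, without which the folding argument collapses — which is precisely why a single cell cannot in general replace the asymptotic formula. The one place demanding care is the bookkeeping in the last step, matching the spatial variable of $f_\infty$ with the translated argument of $Du$ through periodicity and handling the essential suprema over the finitely many translates $i\in I_j$.
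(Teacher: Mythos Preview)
Your proof is correct and follows essentially the same route as the paper: both prove the nontrivial inequality by the folding construction $v(x)=\frac{1}{j^n}\sum_{i\in I_j}u(x+i)$, then invoke the level convexity and $1$-periodicity of $f_\infty$ to compare the essential suprema. One caveat: your appeal to Proposition~\ref{lcus} for the level convexity of $f_\infty(x,\cdot)$ is not quite justified, since that proposition assumes in addition that $f(x,\cdot)$ is upper semicontinuous, which is not among the hypotheses here; the paper's proof simply asserts the level convexity of $f_\infty$ at this step without citing Proposition~\ref{lcus}.
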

\begin{proof}
    For $j\in\mathbb{N}$, we have that $W^{1,\infty}_{\#} (Y; \hspace{0.03cm} \R^d)\subset W^{1,\infty}_{\#} (jY; \hspace{0.03cm} \R^d)$, which implies that
       \begin{align}
       \inf&\left\{\underset{x\in jY}{\mbox{{\rm ess}-{\rm sup}}}\hspace{0.1cm} 
 f_\infty(x, Z+Du(x))\hspace{0.03cm}:\hspace{0.03cm} u\in W^{1,\infty}_{\#} (jY; \hspace{0.03cm} \R^d)     \right\}\notag\\
 &\leq \inf\left\{\underset{x\in Y}{\mbox{{\rm ess}-{\rm sup}}}\hspace{0.1cm} 
 f_\infty(x, Z+Du(x))\hspace{0.03cm}:\hspace{0.03cm} u\in W^{1,\infty}_{\#} (Y; \hspace{0.03cm} \R^d)     \right\}.\notag
       \end{align}
Taking the infimum over $j\in\mathbb{N}$, we easily conclude that
   \begin{equation}
       \notag
       \widetilde{f}_\hom(Z) \leq \inf\left\{\underset{x\in Y}{\mbox{{\rm ess}-{\rm sup}}}\hspace{0.1cm} 
 f_\infty(x, Z+Du(x))\hspace{0.03cm}:\hspace{0.03cm} u\in W^{1,\infty}_{\#} (Y; \hspace{0.03cm} \R^d)     \right\}.
   \end{equation}
On the other hand, for $v\in W^{1, \infty}_{\#}(jY;\hspace{0.03cm} \R^d)$, we set
    \begin{equation}\notag
        u{\color{blue}(x)}:= \sum_{i\in I} {1\over j^n} v(x+i),
    \end{equation}
with $I=\{0, 1, \dots, j-1\}^n$. One can easily prove that $u\in W^{1, \infty}_{\#}(Y;\hspace{0.03cm} \R^d)$. Using the level convexity and the periodicity of $f_\infty$, we deduce that 
    \begin{align}
        \underset{x\in Y}{\mbox{{\rm ess}-{\rm sup}}}\hspace{0.1cm}f_\infty(x, Z+Du(x)) &=
       \underset{x\in Y}{\mbox{{\rm ess}-{\rm sup}}}\hspace{0.1cm}f_\infty(x, Z+\sum_{i\in I} {1\over j^n} Dv(x+i)) \notag\\
       &\leq \underset{x\in Y}{\mbox{{\rm ess}-{\rm sup}}}\hspace{0.1cm}\max_{i\in I} f_\infty(x, Z + Dv(x+i))\notag
       \end{align}
       \begin{align}
       &\leq \max_{i\in I} \underset{x\in Y}{\mbox{{\rm ess}-{\rm sup}}}\hspace{0.1cm} f_\infty(x, Z + Dv(x+i))\notag\\
        &=\max_{i\in I} \underset{y\in (i,1+i)^n}{\mbox{{\rm ess}-{\rm sup}}}\hspace{0.1cm} f_\infty(y-i, Z + Dv(y))\notag\\
        &=\max_{i\in I} \underset{y\in (i,1+i)^n}{\mbox{{\rm ess}-{\rm sup}}}\hspace{0.1cm} f_\infty(y, Z + Dv(y))\notag\\
        &\leq \underset{y\in jY}{\mbox{{\rm ess}-{\rm sup}}}\hspace{0.1cm} f_\infty(y, Z + Dv(y)),\notag
    \end{align}
where we have performed the change of variables $y=x+i$. Now, taking the infimum yields to 
    \begin{align}
        \inf &\left\{ \underset{x\in Y}{\mbox{{\rm ess}-{\rm sup}}}\hspace{0.1cm}f_\infty(x, Z+Du(x))\hspace{0.03cm}:\hspace{0.03cm} u\in W^{1, \infty}_{\#}(Y; \hspace{0.03cm} \R^d) \right\}\notag\\
        &\leq \inf \left\{ \underset{x\in jY}{\mbox{{\rm ess}-{\rm sup}}}\hspace{0.1cm}f_\infty(x, Z+Du(x))\hspace{0.03cm}:\hspace{0.03cm} u\in W^{1, \infty}_{\#}(jY; \hspace{0.03cm} \R^d) \right\}.\notag
    \end{align}
Finally, taking the infimum over $j\in\mathbb{N}$, we conclude that 
    \begin{align}
        \inf \left\{ \underset{x\in Y}{\mbox{{\rm ess}-{\rm sup}}}\hspace{0.1cm}f_\infty(x, Z+Du(x))\hspace{0.03cm}:\hspace{0.03cm} u\in W^{1, \infty}_{\#}(Y; \hspace{0.03cm} \R^d) \right\}\leq \widetilde{f}_\hom(Z),\notag
    \end{align}
as desired.
\end{proof}
\begin{remark} \rm \label{rmk:cellformula}
If  $f(x, \cdot)$ is level convex and upper semicontinuous for every $x\in\R^n$, in view of Proposition \ref{lcus}, the cell formula \eqref{cellformula} may be specialized as follows
       \begin{align}
           \widetilde{f}_\hom(Z) &=\inf \left\{ \underset{x\in Y}{\mbox{{\rm ess}-{\rm sup}}}\hspace{0.1cm} Q_{\infty} f(x, Z+Du(x))\hspace{0.03cm}:\hspace{0.03cm} u\in W^{1, \infty}_{\#}(Y; \hspace{0.03cm} \R^d) \right\}\notag\\
           &=\inf \left\{ \underset{x\in Y}{\mbox{{\rm ess}-{\rm sup}}}\hspace{0.1cm}f^{\rm ls}(x, Z+Du(x))\hspace{0.03cm}:\hspace{0.03cm} u\in W^{1, \infty}_{\#}(Y; \hspace{0.03cm} \R^d) \right\}.\notag
       \end{align}
\end{remark}

{
\section{Homogenization of unbounded functionals}
\label{secHomunb}

In this section, we provide a homogenization result via $\Gamma$-convergence of the family of unbounded integral functionals of the form
    \begin{equation}
        \label{functCDA}
        G_\e(u) := 
        \begin{dcases}
            \int_{\Omega} g\left({x\over \e}, Du(x)\right)dx & \quad \mbox{for } u\in W^{1,q}_{\loc} (\R^n; \hspace{0.03cm} \R^d)\cap L^\infty_{\loc} (\R^n; \hspace{0.03cm} \R^d), \\
            &\\
            +\infty & \quad \mbox{otherwise,}
        \end{dcases}        
    \end{equation}
    
where $\Omega$ is a bounded, open, convex set, $q\in [1, +\infty]$ and the energy density $g: \R^n\times \R^{d \times n} \to [0, +\infty]$ satisfies the following assumptions:
\begin{itemize}
    \item [(H1)] $g$ is ${\mathcal L}(\R^n)\otimes {\mathcal B}(\R^{d \times n})$-measurable,
    \item[(H2)] $g$ is  $Y$-periodic in the first variable and convex in the second one.
\end{itemize} 
{\color{blue} We point out that an open convex set $\Omega$ of $\R^n$ has Lipschitz boundary (see, e.g., \cite[Proposition 2.5.1]{CDA02}).}

For any $q\in[1, +\infty]$, we introduce the homogenized energy density $\widetilde{g}^q_\hom: \R^{d\times n}\to \R$ defined by
     \begin{equation}
     \label{ftildeCDA}
     \widetilde{g}^q_\hom(Z) \coloneqq \inf \left \{ \int_Y g(y, Z + Dv) \, dy: v \in W^{1,q}_{\#} (Y; \mathbb{R}^d) \cap L^{\infty}(Y; \R^d)\right \}.
     \end{equation}
Throughout this section, we also require the following technical assumptions:
     \begin{itemize}
         \item[(H3)] there exists $\delta\in(0,1)$ such that 
   \begin{equation}
       \label{existdelta}
      B_{2\delta}(0)\subseteq {\rm int}({\rm dom}\widetilde{g}^q_\hom).
        \end{equation}
      \item[(H4)]
       We assume that there exist matrices $\{A_i\}_{i=1}^{nd}\subset \R^{d\times n}$, vertices of the cube $Q$ such that ${\rm B}_\delta(0) \subset Q \subset {\rm B}_{2\delta}(0)$ and, for any $i=1, \cdots, nd$,     \begin{equation}
           \label{hpL}
           \int_Y g(y, A_i)dy< +\infty.
       \end{equation}

     \end{itemize}
\begin{remark}\rm
\label{remark:existenceofcube}
     As in {\rm \cite{CDA02}}, assumption {\rm (H3)}, in the scalar case, allows us to find a cube $Q$ whose vertices $\{A_i\}_{i=1}^{nd}$ are contained in the ball $B_{2\delta}(0)$ in the effective domain of $g(x,\cdot)$ for a.e. $x \in \Omega$. 
     This fact is not evident in the vectorial setting. This is the reason why we introduced the technical assumption ${\rm (H4)}$.

       In turn, ${\rm (H4)}$, together with ${\rm (H3)}$ and {\rm \cite[Proposition 1.1.15]{CDA02}}, due to the convexity of $\widetilde g^q_{\rm hom}$, guarantees that
       $$ \widetilde g_{\rm hom}^q(A_i)<+ \infty,$$
       for the same $A_i$ appearing in \eqref{hpL}.
       
Analogously, the convexity of $g$ and \eqref{hpL} entail that \begin{equation}\label{H40}
g(\cdot, 0)\in L^1_{\rm loc}(\R^n),
\end{equation} 
where $0$ denotes the null matrix in $\R^{d\times n}$. 

The same observations made at the beginning, allow us to prove the $\Gamma (L^\infty)$-limit of our sequence \eqref{functCDA}, dealing with sequences  $\{\e_h\}_h$, converging to $0^+$ as $h\to +\infty$ extracted by the vanishing family $\{\varepsilon\}$. Moreover it is sufficient to replace the generic sequence  $\{\e_h\}_h$ by $\{\frac{1}{h}\}$, since, due to hypotheses {\color{blue} {\rm (H1)}-{\rm (H4)}}, we can argue exactly as in \cite[Lemma 12.1.2]{CDA02}.
\color{black}

 \color{blue} We also observe that on the one hand the convexity assumption {\rm (H2)} might appear quite strong compared with the more natural quasiconvexity notion, since we are in the vectorial setting. On the other hand, we are dealing with unbounded functionals, for which quasiconvexity is not completely understood, as it is not clear what is the quasiconvex hull of sets of generic matrices.

 Furthermore, we emphasize that the class of functions $g$ which satisfies {\rm (H1)}-{\rm (H4)}, is clearly not empty. 
 As a first example we could consider $g(x,Z):= a(x){\bf 1}_Q(Z)$, where $Q$ represents the cube in $\mathbb R^{d\times n}$ with vertices $\{A_i\}_{i=1}^{nd}$, and $a(x)$ represents any periodic function in $L^\infty$, such that $a(x)>C$, for a suitable $C>0$.
 Such a function $g$ clearly satisfies all the assumptions {\rm (H1)}-{\rm (H4)}. In particular, regarding {\rm (H3)}, we observe that $\tilde g^q_{hom}(Z)\geq C I_Q (Z)$.

 As a second example, we consider the indicator function of any level set of the function $f(x,Z)= a(x)\Psi(Z)$, where $\Psi$ is given by \eqref{Psidef}. Indeed this latter function belongs to the class of functions satisfying the assumptions in Theorem \ref{Theorem5.1}, for which the indicator functions of the level sets fit in the sets of hypotheses of Theorem \ref{Proposition 12.6.1}.
 \color{black}
\end{remark}

Next, we state some properties of the homogenized energy density $\widetilde{g}_{\hom}^q$ given by \eqref{ftildeCDA}, that will be used in the sequel. The proof of such properties follows the same arguments of \cite[Proposition 12.1.3]{CDA02} and for this reason, it is omitted.

\begin{prop}
    \label{prop:Proposition 12.1.3} 
    Let $g$ be the energy density satisfying {\rm (H1)} and  {\rm (H2)}. Let $q\in[1,+\infty]$ and let $\widetilde{g}_{\hom}^q$ be given by \eqref{ftildeCDA}. 
       \begin{itemize}
           \item [{\rm (i)}] It \color{blue} results \color{black} that $\widetilde{g}^q_{\hom}$ is convex.
           \item[{\rm (ii)}] Assume that  $p\in [1,+\infty]$ and $q\in[p, +\infty]$. In addition, assume that $g$ is such that
               \begin{equation}
               \label{crescitagtildehom}
                   \begin{dcases}
                       |Z|^p\leq g(x, Z) & \mbox{for a.e. } x\in\R^n \hspace{0.1cm} \mbox{for any } Z\in\R^{d\times  n} \hspace{0.1cm} \mbox{if } p\in[1, +\infty),\\
                       {\rm dom}g(x, \cdot)\subseteq B_R(0) & \mbox{for a.e. } x\in\R^n \hspace{0.1cm} \mbox{if } p=+\infty.
                   \end{dcases}
               \end{equation}
              Then, $\widetilde{g}^q_{\hom}$ satisfies 
              \begin{equation}
               \notag
                   \begin{dcases}
                       |Z|^p\leq \widetilde{g}^q_{\hom}(x, Z) &  \mbox{for any } Z\in\R^{d\times  n} \hspace{0.1cm} \mbox{if } p\in[1, +\infty),\\
                       {\rm dom}\widetilde{g}^q_{\hom}\subseteq B_R(0) & \mbox{if } p=+\infty.
                   \end{dcases}
               \end{equation}
                 \item[{\rm (iii)}] Let  $q\in (n,+\infty]$. Then,
             \begin{equation*}
                 \widetilde{g}^q_{\hom}(Z)= \inf\left\{ \int_Y g(y, Z+Dv)dy : v\in W^{1, q}_{\#}(Y; \R^d) \right\}.
             \end{equation*}
        \item[{\rm (iv)}] Assume that $p\in(n, +\infty]$ and $p=q$ {\color{blue} in \eqref{crescitagtildehom}}. Furthermore, assume that $g(x, \cdot)$ is lower semicontinuous for a.e. $x\in\R^n$. Then, $\widetilde{g}^q_{\hom}$ is lower semicontinuous and 
            \begin{equation}
                \notag
                \widetilde{g}^q_{\hom}(Z) = \min\left\{\int_Y g(y, Z+Dv)dy : v\in W^{1, q}_{\#}(Y; \R^d) \right\},
            \end{equation}
        for any $Z\in\R^{d\times n}$.
      
       \end{itemize}
\end{prop}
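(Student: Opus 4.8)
The plan is to follow the scalar scheme of \cite[Proposition 12.1.3]{CDA02}, using repeatedly that $\int_Y Dv\,dy=0$ for every $v\in W^{1,q}_\#(Y;\R^d)$, so that $\int_Y(Z+Dv)\,dy=Z$ for any admissible $v$ in \eqref{ftildeCDA}.

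\emph{Parts (i)--(ii).} For convexity, I would fix $Z_1,Z_2\in\R^{d\times n}$ and $t\in(0,1)$, assume both values finite, take arbitrary admissible competitors $v_1,v_2$, and observe that $v:=tv_1+(1-t)v_2$ again lies in $W^{1,q}_\#(Y;\R^d)\cap L^\infty(Y;\R^d)$; the pointwise convexity of $g(y,\cdot)$ then bounds $\int_Y g(y,tZ_1+(1-t)Z_2+Dv)\,dy$ by $t\int_Y g(y,Z_1+Dv_1)\,dy+(1-t)\int_Y g(y,Z_2+Dv_2)\,dy$, and passing to the infimum over $v_1,v_2$ gives (i). For (ii), when $|Z|^p\le g(x,Z)$ with $p<\infty$, Jensen's inequality for the convex map $|\cdot|^p$ together with $\int_Y(Z+Dv)\,dy=Z$ yields $\int_Y g(y,Z+Dv)\,dy\ge\int_Y|Z+Dv|^p\,dy\ge|Z|^p$ for every admissible $v$, and one passes to the infimum; when $\mathrm{dom}\,g(x,\cdot)\subseteq B_R(0)$, finiteness of $\int_Y g(y,Z+Dv)\,dy$ forces $|Z+Dv(y)|<R$ a.e., hence the a.e.\ positive integrand $R-|Z+Dv|$ has strictly positive integral, so $|Z|=\big|\int_Y(Z+Dv)\,dy\big|\le\int_Y|Z+Dv|\,dy<R$.

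\emph{Parts (iii)--(iv).} Part (iii) is immediate: for $q\in(n,+\infty]$ Morrey's embedding gives $W^{1,q}_\#(Y;\R^d)\subseteq L^\infty(Y;\R^d)$, so the admissible class in \eqref{ftildeCDA} coincides with $W^{1,q}_\#(Y;\R^d)$. For (iv), fix a finite exponent $s>n$ with $s\le p$ (which exists since $p>n$). Given $Z$ with $\widetilde g^q_{\hom}(Z)<+\infty$, take a minimizing sequence $v_k\in W^{1,q}_\#(Y;\R^d)$ of zero mean; by \eqref{crescitagtildehom} the family $\{Z+Dv_k\}$ is bounded in $L^p$ (indeed in $L^\infty$ with $\|Dv_k\|_{L^\infty}\le|Z|+R$ when $p=\infty$, since finite energy forces $|Z+Dv_k|<R$ a.e.), so Poincar\'e--Wirtinger bounds $\{v_k\}$ in $W^{1,s}_\#(Y;\R^d)$; extract $v_k\rightharpoonup v$ in $W^{1,s}$, noting that the periodicity constraint is linear and closed hence stable under weak limits, and that for $p=\infty$ the bound $\|Dv\|_{L^\infty}\le|Z|+R$ survives by lower semicontinuity of the $L^t$-norms as $t\to\infty$. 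Weak lower semicontinuity on $L^s$ of $w\mapsto\int_Y g(y,w)\,dy$ then gives $\int_Y g(y,Z+Dv)\,dy\le\liminf_k\int_Y g(y,Z+Dv_k)\,dy=\widetilde g^q_{\hom}(Z)$, so $v$ is a minimizer. The lower semicontinuity of $\widetilde g^q_{\hom}$ follows by the same compactness argument applied to minimizers $v_k$ of $Z_k\to Z$ (for a subsequence realizing the liminf, which we may assume bounded): $Z_k+Dv_k\rightharpoonup Z+Dv$ in $L^s$ with $v$ admissible, and weak lower semicontinuity yields $\widetilde g^q_{\hom}(Z)\le\int_Y g(y,Z+Dv)\,dy\le\liminf_k\widetilde g^q_{\hom}(Z_k)$.

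\emph{Main obstacle.} The delicate ingredient is the weak (respectively weak-$*$) lower semicontinuity of $w\mapsto\int_Y g(y,w)\,dy$ for a merely ${\mathcal L}(\R^n)\otimes{\mathcal B}(\R^{d\times n})$-measurable, convex, lower semicontinuous, extended-real-valued integrand with no upper bound — a classical De Giorgi--Ioffe type fact that must nonetheless be invoked with care (it is cleanest to combine strong lower semicontinuity via Fatou with Mazur's lemma using convexity). Coupled to this, in the borderline case $p=q=+\infty$ one cannot argue directly in the weak-$*$ topology of $W^{1,\infty}$, which is why the argument is routed through $W^{1,s}$ with $n<s<\infty$ and one must separately check that the $L^\infty$ bound on the gradient of the weak limit is preserved.
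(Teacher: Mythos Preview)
Your proof is correct and follows precisely the approach the paper intends: the paper omits the proof entirely, stating only that it ``follows the same arguments of \cite[Proposition 12.1.3]{CDA02}'', and your argument is exactly that scalar scheme carried over to the vectorial setting. The one point you rightly flag as delicate --- the weak lower semicontinuity of $w\mapsto\int_Y g(y,w)\,dy$ for an extended-real-valued convex lower semicontinuous integrand, handled via Fatou plus Mazur --- is the standard way to deal with it and is implicit in the cited reference.
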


The main result of this section is the following.
\begin{thm}
\label{Proposition 12.6.1}
    Let $g:\mathbb R^n \times \mathbb R^{d\times n}\to [0, +\infty]$ be satisfying {\rm (H1)}-{\rm (H4)}.  
    Then, the functionals $\{G_\e\}_{\e}$ given by \eqref{functCDA} $\Gamma(L^\infty)$-converge to the functional $G_{\hom}$ defined by 
          \begin{equation}
              \notag
              G_{\hom}(\Omega, u)\coloneqq \int_{\Omega} (\widetilde{g}^q_{\hom}(Du))^{\rm ls}dx,
          \end{equation}
    for any $\Omega\in{\cal T}_0$ and $u\in \bigcup_{s>n} W^{1,s}_{\loc}(\R^n; \R^d)$ and with $\widetilde{g}^q_{\hom}$ being defined by \eqref{ftildeCDA}.
\end{thm}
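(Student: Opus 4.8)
The plan is to adapt the scalar homogenization scheme of \cite[Chapter 12]{CDA02} to the present vectorial setting, proving separately the $\Gamma$-$\liminf$ and the $\Gamma$-$\limsup$ inequalities with respect to $L^\infty$-convergence and carefully tracking the lower semicontinuous envelope throughout. As a preliminary reduction, invoking Remark \ref{remark:existenceofcube} and the Urysohn property, I would replace the vanishing family $\{\e\}$ by the sequence $\{1/h\}_h$, and I would localize: for $u\in\bigcup_{s>n}W^{1,s}_{\loc}(\R^n;\R^d)$ and $A\in\mathcal{T}_0$ set $G_\e(A,u):=\int_A g(x/\e,Du)\,dx$ (finite only under the $W^{1,q}_{\loc}\cap L^\infty_{\loc}$ constraint), and denote by $G^-(\cdot,u)$, $G^+(\cdot,u)$ the associated $\Gamma$-$\liminf$ and $\Gamma$-$\limsup$ set functions. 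Translation invariance of $G_\e$ (from the $Y$-periodicity of $g$) and the passage to $\e_h=1/h$ are justified exactly as in \cite[Lemma 12.1.2]{CDA02}, using only {\rm (H1)}--{\rm (H4)}.

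For the $\Gamma$-$\limsup$ inequality I would argue in three stages. First, for $u$ affine with $Du\equiv Z$, pick a nearly optimal $v\in W^{1,q}_{\#}(Y;\R^d)\cap L^\infty(Y;\R^d)$ in \eqref{ftildeCDA} and set $u_h(x):=Zx+\tfrac1h v(hx)$: then $u_h\to u$ uniformly because $v\in L^\infty$, while $Du_h(x)=Z+Dv(hx)$ and a Riemann--Lebesgue averaging argument for the $Y$-periodic integrand $y\mapsto g(y,Z+Dv(y))\in L^1(Y)$ gives $G_{1/h}(\Omega,u_h)\to|\Omega|\,\widetilde{g}^q_{\hom}(Z)$. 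Second, for $u$ piecewise affine I would run this construction on each polyhedron of a decomposition of $\Omega$ and glue the pieces by cut-off functions supported in thin neighbourhoods of the interfaces, first using the dilation trick (replacing $u$ by $u((1-\eta)\,\cdot\,)$, which forces $\Omega$ convex or strongly star-shaped) to free a neighbourhood of $\partial\Omega$; the crucial point is that on the transition layers the gradient of the glued map takes values in convex combinations of the slopes $Z_i$, of $0$, and of the vertices $A_i$ of the cube $Q$, all of which lie in ${\rm dom}\,\widetilde{g}^q_{\hom}$ by {\rm (H3)}--{\rm (H4)} and Proposition \ref{prop:Proposition 12.1.3}, so that convexity {\rm (H2)} keeps the layer energy finite and, shrinking the layers, yields $\limsup_h G_{1/h}(\Omega,u_h)\le\int_\Omega\widetilde{g}^q_{\hom}(Du)\,dx$. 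Third, for a general $u\in\bigcup_{s>n}W^{1,s}_{\loc}$ with $\int_\Omega(\widetilde{g}^q_{\hom}(Du))^{\rm ls}\,dx<+\infty$, I would approximate it by piecewise affine maps $u_k\to u$ in $W^{1,s}(\Omega;\R^d)$ (hence uniformly, $s>n$) chosen — by the relaxation results of \cite[Chapter 12]{CDA02} applied to the convex function $\widetilde{g}^q_{\hom}$ (Proposition \ref{prop:Proposition 12.1.3}(i)) — so that $\int_\Omega\widetilde{g}^q_{\hom}(Du_k)\,dx\to\int_\Omega(\widetilde{g}^q_{\hom}(Du))^{\rm ls}\,dx$, and then diagonalize against the recovery sequences of the second stage; this produces $G^+(\Omega,u)\le G_{\hom}(\Omega,u)$.

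For the $\Gamma$-$\liminf$ inequality I would first establish, via a fundamental estimate (matching boundary data with finitely many finite-energy competitors, again built from the vertices $A_i$ of $Q$ and convexity {\rm (H2)}), that $A\mapsto G^-(A,u)$ is superadditive and inner regular, hence the restriction to $\mathcal{T}(\Omega)$ of a Radon measure; I would then identify its Radon--Nikodym density with respect to Lebesgue measure at a.e.\ point $x_0$ of approximate differentiability of $u$. By translation invariance and a blow-up/rescaling argument the density at $x_0$ depends only on $Du(x_0)$ and, testing against the cell problem \eqref{ftildeCDA} and using Jensen's inequality (convexity of $g$ in the second variable) together with the $Y$-periodicity of $g$, it is bounded below by $\widetilde{g}^q_{\hom}(Du(x_0))$; since $G^-(\Omega,\cdot)$ is automatically $L^\infty$-lower semicontinuous, this bound upgrades to $(\widetilde{g}^q_{\hom}(Du(x_0)))^{\rm ls}$, whence $G^-(\Omega,u)\ge\int_\Omega(\widetilde{g}^q_{\hom}(Du))^{\rm ls}\,dx=G_{\hom}(\Omega,u)$. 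Combining the two inequalities with the preliminary reduction to $\e_h=1/h$ gives the statement.

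The main obstacle — and the precise reason the vectorial case is harder than \cite[Chapter 12]{CDA02} — is the construction of finite-energy competitors on the transition layers, both in the recovery sequence and in the fundamental estimate: for scalar maps one can truncate or interpolate monotonically in one variable while automatically respecting the constraint, but for $\R^d$-valued maps these devices are unavailable, which is exactly why {\rm (H4)} (a full cube $Q$ with all its vertices $A_i$ in the effective domain) is imposed, so that arbitrary interface data can be interpolated through convex combinations of the $A_i$ without leaving ${\rm dom}\,g$. A secondary but genuine difficulty is that $\widetilde{g}^q_{\hom}$ need not be lower semicontinuous — its effective domain may fail to be closed — so the envelope $(\widetilde{g}^q_{\hom})^{\rm ls}$ must be handled explicitly in both bounds, and the dilation argument that secures room near $\partial\Omega$ is what imposes the convexity (or strong star-shapedness with Lipschitz boundary) of $\Omega$.
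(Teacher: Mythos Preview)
Your outline contains many of the right ingredients, but the direct route you propose through piecewise affine functions has a genuine gap, and the paper's proof takes a structurally different path precisely to avoid it.

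The problem is in your Stage~2 (and its counterpart in the fundamental estimate). On a transition layer between two pieces, the glued gradient is
\[
Du_h=\psi\,Du_h^i+(1-\psi)\,Du_h^j+(u_h^i-u_h^j)\otimes\nabla\psi .
\]
You claim this lies in the convex hull of the slopes, $0$, and the vertices $A_i$, so that convexity keeps the layer energy bounded and shrinking the layer kills it. But that is not true: the expression above is the \emph{sum} of a convex combination $\psi\,Du_h^i+(1-\psi)\,Du_h^j\in{\rm dom}\,g$ and a perturbation $(u_h^i-u_h^j)\otimes\nabla\psi$. Since $|u_h^i-u_h^j|\sim\|Du\|_{L^\infty}\cdot\eta$ near the interface while $|\nabla\psi|\sim C/\eta$, the perturbation is of order $\|Du\|_{L^\infty}$ \emph{independently of the layer width} $\eta$; it does not fall into $B_\delta(0)$ and there is no reason the sum stays in ${\rm dom}\,g$. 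This is exactly why the paper's Lemma~\ref{lemma12.3.1} only proves the estimate for $t u$ with $t\le\dfrac{2\delta}{4\sigma(u)(2\|Du\|_{L^\infty}+1)+\delta}$: the factor $t<1$ creates the slack $(1-t)$ needed to absorb the interface term as a genuine convex combination. So from the piecewise-affine construction you obtain only a \emph{finiteness} bound (Proposition~\ref{Prop 12.3.2}), not the sharp $\limsup$, and your Stage~3 diagonalization has nothing to start from. The same difficulty reappears in your $\liminf$ argument: the lower bound on affine functions is not a Jensen inequality but the periodicization construction of Lemma~\ref{Lemma 12.4.5}, which again only delivers $\widetilde g^q_{\hom}(tZ)<+\infty$ for $t<1$.

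The paper circumvents this by \emph{not} trying to build recovery sequences for general (or even piecewise affine) $u$. Instead it assembles the partial information---the exact value on affine functions (Proposition~\ref{Prop 12.4.7}), the super/subadditivity of the inner regular envelopes (Proposition~\ref{Proposition 12.2.1}), the finiteness bound near $0$ (Proposition~\ref{Prop 12.3.2}), and the blow-up condition (Proposition~\ref{Prop 12.5.2})---into the hypotheses (T1)--(T9) of the abstract integral-representation Theorem~\ref{teo9.3.8CDA}, which then outputs the identification $H(\Omega,u)=\int_\Omega(\widetilde g^q_{\hom})^{\rm ls}(Du)\,dx$ on $W^{1,p}_{\rm loc}$ in one stroke; Urysohn and the convexity of $\Omega$ (via \cite[Proposition~2.7.4]{CDA02}) finish the job. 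Your proposal is a plausible heuristic for the scalar case, but in the vectorial setting the route through the abstract theorem is not merely an alternative---it is how one bypasses the interface obstruction you underestimated.
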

The proof is quite long and technical and it is provided at the end of this section as a consequence of all the preliminary results proved in the sequel. Indeed, we adapt the methodology used in \cite[Chapter 12]{CDA02}. Briefly,  we investigate some measure-theoretic properties of the $\Gamma$-limits. In particular, we prove that the $\Gamma$-liminf is super-additive and the $\Gamma$-limsup is sub-additive. The proof slightly differs from  \cite[Proposition 12.2.1]{CDA02} which uses arguments being appropriate only in the scalar setting. Indeed, the proof of these properties in the scalar setting relies on the existence of a particular family of cut-off functions (see \cite[Lemma 11.1.1]{CDA02}). The explicit construction of such cut-off functions may not be repeated in the vectorial case and hence we need to use other techniques to prove the measure theoretic aspect of $\Gamma$-limit (see Proposition \ref{Proposition 12.2.1}).  Then, we provide an integral representation on linear functions. Once again, the proof of such a result is different from \cite[Proposition 12.4.6]{CDA02}. Finally, we give a full representation result.  
}

\subsection{Measure representation results of $\Gamma$-limits}
\color{black}

In this subsection, we prove some measure theoretical aspects of the functionals $\{G_\e\}_{\e}$ defined by \eqref{functCDA}. To that end, for $h\in\mathbb{N}$, we consider a subsequence $G_{h}:= G_{\e_h}$ of the family $\{G_\e\}_{\e},$ where $\{\e_h\}_h$ is a positive decreasing sequence that goes to $0^+$ as $h \rightarrow + \infty$.
\\
{\color{blue} We recall the definition of increasing set functions, superadditvity on disjoint sets and subadditivity (see \cite[Definitions 2.6.1 and 2.6.5]{CDA02}).
\begin{definition}
Let $\mathcal{O}$ be the family of open subsets of $\mathbb R^n$  and $\alpha: \mathcal{O} \rightarrow [0, + \infty].$ We say that $\alpha$ is:
\begin{itemize}
    \item[{\rm (i)}] {\sc increasing set function} if
        \begin{equation*}
            \alpha(\Omega_1)\leq \alpha(\Omega_2),  
        \end{equation*}
    for every $\Omega_1, \Omega_2\in \mathcal{O}$ such that $\Omega_1\subseteq\Omega_2$;
    \item[{\rm (ii)}] {\sc superadditive} if 
    \begin{equation*}
        \alpha (\Omega_1) + \alpha(\Omega_2)  \le \, \alpha(\Omega),
    \end{equation*}
for every $\Omega_1, \Omega_2, \Omega \in \mathcal{O}$ with $\Omega_1 \cap \Omega_2 = \emptyset,$ $\Omega_1 \cup \Omega_2 \subseteq \Omega,$
    \item[{\rm (iii)}] {\sc subadditive} if
       \begin{equation*}
           \alpha(\Omega) \le \alpha (\Omega_1) + \alpha(\Omega_2), 
       \end{equation*}
for every $\Omega_1, \Omega_2, \Omega \in \mathcal{O}$ with $\Omega_1 \cap \Omega_2 = \emptyset,$ $\Omega \subseteq  \Omega_1 \cup \Omega_2 $.
\end{itemize}
\end{definition}

In what follows, we also make use of an inner regular envelope whose definition is recalled below (see \cite[Definition 15.5]{DM93})

\begin{definition}
    Let $G:\Omega\times {\cal T}(\Omega)\to [-\infty, +\infty]$ be an increasing functional. The inner regular envelope of $G$ is the increasing functional $G_{-}:\Omega\times {\cal T}(\Omega)\to [-\infty, +\infty]$ given by 
         \begin{align}
             \notag
             G_{-}(x, A) \coloneqq \sup\{G(x, B) : B\in {\cal T}(\Omega), B\subset\subset A \}. 
         \end{align}
\end{definition}
}

We  set
    \begin{eqnarray}
     && \widetilde{G}'(\Omega, \cdot): u \in L^{\infty}_{\rm loc}(\mathbb{R}^n; \mathbb{R}^d) \mapsto \Gamma(L^{\infty})\mbox{-}\liminf_{h \rightarrow + \infty} G_{\e_h}(\Omega, u), \label{defliminf}\\[2mm]
       && \widetilde{G}''(\Omega, \cdot): u \in L^{\infty}_{\rm loc}(\mathbb{R}^n; \mathbb{R}^d) \mapsto \Gamma(L^{\infty})\mbox{-}\limsup_{h \rightarrow + \infty} G_{\e_h}(\Omega, u).\label{deflimsup}
     \end{eqnarray}

The next result, whose proof is an adaptation of \cite[Proposition 12.2.1]{CDA02}, follows.
\begin{prop}
    \label{Proposition 12.2.1}
     Let $g: \R^n\times \R^{d \times n} \to [0, +\infty]$ satisfy  
     {\rm (H1)}-{\rm (H4)},
    and let $q\in[1, +\infty]$. 
     Let $\Omega, \Omega_1, \Omega_2\in {\cal T}_0$. 
           \begin{itemize}
               \item[{\rm (i)}]  If $\Omega_1\cap \Omega_2=\emptyset$ and $\Omega_1\cup\Omega_2\subset \Omega$, then 
       \begin{equation}
       \label{ine12.2.2}
           \widetilde{G}'_{-}(\Omega, u)\geq \widetilde{G}'_{-}(\Omega_1, u) + \widetilde{G}'_{-}(\Omega_2, u),
       \end{equation}
    for any $u\in L^\infty_\loc (\R^n; \R^d)$, where $\widetilde{G}'_{-}$ is the inner regular envelope of the functional in \eqref{defliminf}.
             \item[{\rm (ii)}]  If $\Omega\subset \Omega_1\cup \Omega_2$,  then 
       \begin{equation}
       \label{ine12.2.3}
           \widetilde{G}''_{-}(\Omega, u)\leq \widetilde{G}''_{-}(\Omega_1, u) + \widetilde{G}''_{-}(\Omega_2, u),
       \end{equation}
       for any $u\in L^\infty_\loc (\R^n; \R^d)$, with $\widetilde{G}''_{-}$ the inner regular envelope of the functional defined by \eqref{deflimsup}. 
           \end{itemize}
\end{prop}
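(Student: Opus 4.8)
\textbf{Proof plan for Proposition \ref{Proposition 12.2.1}.}
The plan is to follow the scheme of \cite[Proposition 12.2.1]{CDA02} but to replace the scalar cut-off argument (which relied on \cite[Lemma 11.1.1]{CDA02}) by a direct localization/blow-up argument that is compatible with the vectorial setting and with the unboundedness of $g$. Both (i) and (ii) reduce to standard facts about inner regular envelopes of increasing set functions, together with one genuinely new estimate for the limsup functional; I will describe (ii) in detail since (i) is easier.

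For (i): fix $u\in L^\infty_\loc(\R^n;\R^d)$ and open sets $\Omega_1,\Omega_2,\Omega$ with $\Omega_1\cap\Omega_2=\emptyset$ and $\Omega_1\cup\Omega_2\subset\Omega$. It suffices to prove superadditivity of $\widetilde G'(\cdot,u)$ itself on disjoint open subsets whose closures are compactly contained in $\Omega$, since passing to the inner regular envelope then gives \eqref{ine12.2.2} by \cite[Proposition 15.5]{DM93}. Take $A_i\subset\subset\Omega_i$ open, $i=1,2$; since $A_1,A_2$ are disjoint, for any sequence $u_h\to u$ in $L^\infty_\loc$ the nonnegativity of $g$ gives
\[
G_{\e_h}(\Omega,u_h)\ge G_{\e_h}(A_1,u_h)+G_{\e_h}(A_2,u_h),
\]
and taking $\liminf_h$ and then the infimum over recovery sequences yields $\widetilde G'(\Omega,u)\ge \widetilde G'(A_1,u)+\widetilde G'(A_2,u)$ — here one uses that a diagonal/restriction argument lets the two infima be taken independently. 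Supremizing over $A_1\subset\subset\Omega_1$ and $A_2\subset\subset\Omega_2$ gives $\widetilde G'(\Omega,u)\ge\widetilde G'_-(\Omega_1,u)+\widetilde G'_-(\Omega_2,u)$; since $\Omega$ is arbitrary with $\Omega_1\cup\Omega_2\subset\Omega$, passing to the inner regular envelope on the left gives \eqref{ine12.2.2}.

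For (ii), which is the hard part: fix $\Omega\subset\Omega_1\cup\Omega_2$ and $u\in L^\infty_\loc(\R^n;\R^d)$, and fix $\Omega'\subset\subset\Omega$; we must produce, for arbitrary $\Omega_1'\subset\subset\Omega_1$, $\Omega_2'\subset\subset\Omega_2$ with $\Omega'\subset\Omega_1'\cup\Omega_2'$, a recovery sequence for $\widetilde G''(\Omega',u)$ bounded by $\widetilde G''(\Omega_1',u)+\widetilde G''(\Omega_2',u)+o(1)$. We can assume the right-hand side is finite. Choose (almost) recovery sequences $u_h^{(1)}\to u$ on $\Omega_1$ and $u_h^{(2)}\to u$ on $\Omega_2$ realizing $\widetilde G''$ there; the difficulty is to glue them across the overlap $\Omega_1\cap\Omega_2$ without introducing an unbounded gradient contribution, which is exactly where the scalar cut-off trick fails. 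The remedy, following the spirit of the De~Giorgi slicing/averaging method as used in the unbounded setting, is to interpolate $u_h^{(1)}$ and $u_h^{(2)}$ on a thin shell $S=\{x:\, \mathrm{dist}(x,\partial\Omega_1')<\eta\}\cap\Omega$ using a \emph{single} scalar cut-off $\varphi\in C_c^\infty$ (which is legitimate: the convex combination $v_h:=\varphi u_h^{(1)}+(1-\varphi)u_h^{(2)}$ satisfies $Dv_h=\varphi Du_h^{(1)}+(1-\varphi)Du_h^{(2)}+D\varphi\otimes(u_h^{(1)}-u_h^{(2)})$, and the last term is controlled in $L^\infty$ because $\|u_h^{(1)}-u_h^{(2)}\|_{L^\infty(S)}\to 0$). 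To absorb the error term produced by $D\varphi\otimes(u_h^{(1)}-u_h^{(2)})$ one uses hypothesis (H4): the cube $Q$ with vertices $\{A_i\}$ has nonempty interior containing $B_\delta(0)$, and $g(\cdot,A_i)\in L^1_\loc$ together with convexity of $g(x,\cdot)$ and \eqref{H40} gives a local uniform bound $g(x,Z)\le C(1+|Z|)$ for $|Z|\le\delta$ (after translating); since the perturbation $D\varphi\otimes(u_h^{(1)}-u_h^{(2)})$ is uniformly small, the convexity inequality $g(x,Dv_h)\le (1-t)g(x,\text{interpolant})+t\,g(x,\text{rescaled perturbation})$ with a small $t$ depending on $\eta$ lets the extra cost on $S$ be bounded by $C\,\mathcal L^n(S)\to 0$ as $\eta\to 0$, uniformly in $h$. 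Then $v_h\to u$ in $L^\infty_\loc$, $v_h$ is admissible (its gradient lies in $L^s_\loc$ for $s>n$ by interpolation), and
\[
\limsup_h G_{\e_h}(\Omega',v_h)\le \widetilde G''(\Omega_1',u)+\widetilde G''(\Omega_2',u)+C\,\mathcal L^n(S),
\]
which after letting $\eta\to0$ and taking the infimum over recovery sequences gives $\widetilde G''(\Omega',u)\le \widetilde G''_-(\Omega_1,u)+\widetilde G''_-(\Omega_2,u)$; supremizing over $\Omega'\subset\subset\Omega$ yields \eqref{ine12.2.3}.

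\textbf{Main obstacle.} The crux is the gluing step in (ii): in the scalar case one splits the overlap into many thin shells and, by a pigeonhole/averaging argument over the shells, picks one on which the cut-off derivative times $(u_h^{(1)}-u_h^{(2)})$ contributes an arbitrarily small amount of energy — but this requires an a priori bound on the energy density away from its singularities, which is not available for a general unbounded integrand. The role of (H4) (equivalently the existence of the cube $Q$ with $B_\delta(0)\subset Q\subset B_{2\delta}(0)$ and $g(\cdot,A_i)\in L^1_\loc$) is precisely to supply such a bound on a neighborhood of $0$ in matrix space, so that small gradient perturbations are energetically cheap; making this quantitative, uniformly in $h$ and compatibly with the convexity inequality, is the technical heart of the argument.
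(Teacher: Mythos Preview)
Your argument for (i) is correct and matches the paper's (terse) proof.

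For (ii), the overall scheme---glue recovery sequences with a single scalar cut-off and control the cross term via (H4)---is exactly the paper's, but your convexity splitting has a genuine gap. Writing $Dv_h = A + B$ with $A = \varphi Du_h^{(1)} + (1-\varphi)Du_h^{(2)}$ and $B = D\varphi\otimes(u_h^{(1)}-u_h^{(2)})$, the only way to realise your inequality $g(x,Dv_h)\le (1-t)\,g(x,\text{interpolant})+t\,g(x,\text{rescaled perturbation})$ is to take the interpolant equal to $\tfrac{1}{1-t}A$. For an unbounded $g$ (think of $g(x,\cdot)=1_{C(x)}$ with $C(x)$ a closed convex set, which is precisely the application in Theorem~\ref{Theorem5.1}), the recovery sequences only guarantee $Du_h^{(i)}(x)\in C(x/\e_h)$ a.e., hence $A(x)\in C(x/\e_h)$ by convexity; but $\tfrac{1}{1-t}A(x)$ need not lie in $C(x/\e_h)$, so $g(x/\e_h,\tfrac{1}{1-t}A)$ can be $+\infty$ on a set of positive measure. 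No choice of ``small $t$'' (even $t=t_h\to 0$) rescues this without continuity of $g(x,\cdot)$ at the boundary of its effective domain, which is not assumed. Relatedly, your claimed pointwise bound ``$g(x,Z)\le C(1+|Z|)$ for $|Z|\le\delta$'' is false: (H4) only gives $\int_Y g(y,A_i)\,dy<\infty$, so control on the cross term is available only in the limit via the Riemann--Lebesgue lemma, not pointwise in $x$ and $h$.

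The paper's fix is not to bound $\widetilde G''(\Omega,u)$ directly but to bound $\widetilde G''(\Omega,\,tu)$ for $t\in[0,1)$, using the recovery sequence $tw_h\to tu$. One writes
\[
tDw_h \;=\; t\,A \;+\; (1-t)\,\tfrac{t}{1-t}B,
\]
so convexity gives $g(\cdot,tDw_h)\le t\,g(\cdot,A)+(1-t)\,g\bigl(\cdot,\tfrac{t}{1-t}B\bigr)$ with \emph{no rescaling of $A$}; the $A$-term then splits (again by convexity in $\varphi$) into the two $\widetilde G''(\Omega_i,u)$ contributions. For the $B$-term one uses that for $h$ large $\tfrac{t}{1-t}B\in B_\delta(0)\subset Q$, writes it as a convex combination of the vertices $\{A_i\}$, and applies Riemann--Lebesgue to get $\limsup_h\int g(x/\e_h,A_i)\,dx\le \mathcal L^n(\Omega)\int_Y g(y,A_i)\,dy$. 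The resulting error is $(1-t)C_f$ with $C_f$ independent of $t$, and one concludes by letting $t\to 1^-$ and invoking the $L^\infty$-lower semicontinuity of $\widetilde G''(\Omega,\cdot)$ (a general property of $\Gamma$-limsup functionals). The missing ingredient in your plan is precisely this passage through $tu$ together with the final $t\to 1$ step.
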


\begin{proof}
    Inequality \eqref{ine12.2.2} follows from the definition of $\widetilde{G}'_{-}$. To show \eqref{ine12.2.3},  we note that, by \eqref{existdelta}, it holds 
\[
0 \in {\rm int} ({\rm dom} \widetilde{g}^q_\hom). 
\] 
   It is not restrictive to assume that $\Omega\subset\subset \Omega_1\cup\Omega_2$, so that we may simply prove that 
    \begin{equation}
    \label{ine12.2.4}
         \widetilde{G}''(\Omega, u)\leq \widetilde{G}''(\Omega_1, u) + \widetilde{G}''(\Omega_2, u),
    \end{equation}
    for any $u\in L^\infty(\R^n; \R^d)$. To that end, fix $u\in L^\infty(\R^n; \R^d)$ and assume that the right-hand side of \eqref{ine12.2.4} is finite. Therefore, for $i=1,2$, there exist two sequences $\{u_h^{i}\}\subset W^{1, q}_\loc(\R^n; \R^d)\cap L^\infty_\loc(\R^n; \R^d)$ such that  $u_h^i$ converges to $u$ in $L^\infty(\Omega_i; \R^d)$ and, for any positive decreasing sequence $\e_h \rightarrow 0^+$ as $h \rightarrow + \infty$
        \begin{equation}
        \label{eq2Prop12.2.1}
            \limsup_{h\to+\infty} \int_{\Omega_i}g \left(\frac{x}{\e_h}, Du^i_h \right)dx\leq \widetilde{G}''(\Omega_i, u).
        \end{equation}
    Since $\Omega\subset\subset \Omega_1\cup\Omega_2$, there exists $A_1\subset\subset\Omega_1$ such that $\Omega\subset\subset A_1\cup\Omega_2$. For every $h \in \mathbb N$, we construct $w_h\in W^{1,q}_\loc(\R^n; \R^d)\cap L^\infty_\loc(\R^n; \R^d)$ which is equal to $u^1_h$ in $\Omega_1$, and $w_h=u^2_h$ in $\Omega_2$. Let $\psi$ be a cut-off function such that $0\leq \psi\leq 1$ a.e. in $\Omega_1$, $\psi\equiv 1$ a.e. in $\overline{A_1}$, $\psi\equiv 0$ a.e. in $\R^n\setminus\Omega_1$, and such that 
     \begin{equation}
     \notag
         \|\nabla \psi\|_{L^\infty(\mathbb{R}^n)}  \leq {C\over \mbox{dist}(A_1, \partial \Omega_1)}
     \end{equation}
     holds for some constant $C>0.$ For any $h\in\N$, we define the vector-valued function $w_h$ by
       \begin{equation}
       \notag
           w_h\coloneqq \psi u^{1}_h + (1-\psi)u^{2}_h.
       \end{equation}
      Note that the sequence $\{w_h\}_h$ converges to $u$ in $L^\infty(\Omega; \R^d)$. For fixed $t\in[0,1)$, evoking the convexity of $g$ and recalling that $\Omega\setminus A_1\subset\subset \Omega_2$, it follows that 
      
         \begin{align}
             & \widetilde{G}''(\Omega, tu)\leq \limsup_{h\to +\infty} \int_\Omega g\left(\frac{x}{\e_h}, tDw_h\right )dx\notag\\
             &= \limsup_{h\to +\infty} \int_\Omega g\left(\frac{x}{\e_h}, t\psi Du^1_h + t(1-{\psi})Du^2_h + t(u^1_h-u^2_h)\otimes \nabla \psi \right)dx\notag\\
             &\leq \limsup_{h\to+\infty} t\int_\Omega g\left(\frac{x}{\e_h}, \psi Du^1_h + (1-\psi)Du^2_h\right)dx\notag\\
             &\quad + \limsup_{h\to+\infty} (1-t)\int_\Omega g\left(\frac{x}{\e_h}, {t\over 1-t}(u^1_h-u^2_h)\otimes \nabla \psi\right)dx\notag\\
             &\leq \limsup_{h\to+\infty} \int_\Omega \psi(x)g\left(\frac{x}{\e_h}, Du^1_h\right)dx + \limsup_{h\to +\infty} \int_\Omega (1-\psi(x))g\left(\frac{x}{\e_h}, Du^2_h\right)dx\notag\\
             &\quad + \limsup_{h\to +\infty} (1-t)\int_\Omega g\left(\frac{x}{\e_h}, {t\over 1-t}(u^1_h-u^2_h)\otimes \nabla \psi\right)dx\notag\\
             &\leq \limsup_{h\to+\infty} \int_{\Omega_1} g\left(\frac{x}{\e_h}, Du^1_h\right)dx + \limsup_{h\to+\infty} \int_{\Omega_2} g\left(\frac{x}{\e_h}, Du^2_h\right)dx\notag\\
             &\quad + \limsup_{h\to+\infty} (1-t)\int_\Omega g\left(\frac{x}{\e_h}, {t\over 1-t}(u^1_h-u^2_h)\otimes \nabla \psi\right)dx
             .\notag
         \end{align}

We observe  that $\nabla \psi$ is not identically equal to $0$ in $\Omega\cap(\Omega_1\setminus \overline{A_1})$ while $\nabla \psi=0$ in  $\Omega\setminus(\Omega_1\setminus \overline{A_1})$. This along with \eqref{eq2Prop12.2.1} implies that 
       \begin{align}
           \widetilde{G}''(\Omega, tu)& \leq \widetilde{G}''(\Omega_1, u) + \widetilde{G}''(\Omega_2, u)+(1-t)\limsup_{h\to+\infty} \int_{\Omega\setminus(\Omega_1\setminus \overline{A_1})} g\left(\frac{x}{\e_h}, 0\right)dx\notag\\
           &\quad + (1-t)\limsup_{h\to+\infty} \int_{\Omega\cap(\Omega_1\setminus \overline{A_1})} g\left(\frac{x}{\e_h}, {t\over 1-t}(u^1_h-u^2_h)\otimes \nabla \psi \right)dx.\label{eq3Prop12.2.1}
       \end{align}
    We estimate the last limsup in the right-hand side of \eqref{eq3Prop12.2.1}. First, 
    note that $\Omega\cap(\Omega_1\setminus\overline{A_1})\subset \Omega_1\cap\Omega_2$. Hence, $u^1_h-u^2_h$ converges to $0$ in $L^\infty(\Omega\cap(\Omega_1\setminus\overline{A_1}); \R^d)$.
         \begin{equation}
             \notag
             {t\over 1-t}(u^1_h-u^2_h)\otimes \nabla \psi \in B_\delta(0), 
         \end{equation}
    for any $h\geq h_t$ and for a.e.  $x\in\Omega\cap(\Omega_1\setminus\overline{A}_1)$. 
    In view of \eqref{hpL}, ${t\over 1-t}(u^1_h-u^2_h)\otimes \nabla \psi$  may be written as a convex combination of the vertices $\{A_i\}_{i=1}^{dn}$ of  cube $Q $     
    and
            \begin{align}
    &\int_{\Omega\cap(\Omega_1\setminus \overline{A_1})} g\left (\frac{x}{\e_h}, {t\over 1-t}(u^1_h-u^2_h)\otimes \nabla \psi\right)dx =\int_{\Omega\cap(\Omega_1\setminus \overline{A_1})} g\left(\frac{x}{\e_h}, \sum_{i=1}^{dn} s^h_i(x) A_{i}  \right)dx\notag\\
                &\leq \sum_{i=1}^{dn}  \int_{\Omega\cap(\Omega_1\setminus \overline{A_1})}  s^h_i(x) g\left(\frac{x}{\e_h}, A_i\right)dx\notag\\
                &\leq \sum_{i=1}^{dn}\int_{\Omega\cap(\Omega_1\setminus \overline{A_1})} g\left(\frac{x}{\e_h}, A_i\right)dx\notag
            \end{align}
              for any $h\geq h_t$. Now, the Riemann-Lebesgue Lemma ensures that there exists a positive constant $C_f=C(f)$ such that 
              \begin{equation}
                  \notag
                  \limsup_{h\to+\infty} \int_{\Omega\cap(\Omega_1\setminus \overline{A_1})}g\left(\frac{x}{\e_h}, {t\over 1-t}(u^1_h-u^2_h)\otimes \nabla \psi\right)dx\leq \mathcal L^n(\Omega)\sum_{i=1}^{{dn}}\int_Y g(y, A_i)dy\leq C_f.
              \end{equation}
            This along with \eqref{eq3Prop12.2.1} yields to 
               \begin{align}
                   \widetilde{G}''(\Omega, tu)& \leq \widetilde{G}''(\Omega_1, u) + \widetilde{G}''(\Omega_2, u)+(1-t)\mathcal L^n(\Omega)\int_Yg(y, 0)dy +(1-t) C_f,\notag
               \end{align}
            for any $t\in [0, 1)$. Finally, passing to the limit as $t\to 1$, we obtain estimate \eqref{ine12.2.4}, as desired.
\end{proof}

\subsection{Finiteness conditions}

In this subsection, we provide some sufficient conditions to get the finiteness of the functional $G''(\Omega, u)$. First, we show the finiteness on the set of piecewise affine functions.   The proof is quite technical and differs from the one available in the scalar setting where ad hoc cut-off functions maybe easily constructed (see \cite{CDA02}). Here we propose a more generic construction leading to a slight different estimate than the analogous one given in \cite[Lemma 12.3.1]{CDA02}.

For any $u\in \color{blue}A_{\rm pw}\color{black}(\R^n; \R^d)$, we set
      \begin{equation}
          \label{def:functionsigmau}
          \sigma(u)\coloneqq \max_{i\in\{1,\dots, m\}} {\rm card} \left\{j\in\{1,\dots, m\}\hspace{0.03cm}: \hspace{0.03cm} \overline{P}_j\cap \overline{P}_i \neq\emptyset   \right\}.
      \end{equation}
\begin{lemma}
\label{lemma12.3.1} Let $g: \R^n\times \R^{d \times n} \to [0, +\infty]$ satisfy {\rm (H1)}-{\rm (H4)}.  
\color{black} Let $G''$ be the functional given by \eqref{deflimsup} and $q\in [1,+\infty]$. 
Then, 
     \begin{equation}
            \notag
            {\color{blue}\widetilde{G}}''(\Omega, tu)\leq t\int_{\Omega}\widetilde{g}^q_\hom(Du)dx +(1-t){\cal L}^n(\Omega)\int_Y g(y,0)dy, 
        \end{equation}
    for any $\Omega\in{\color{blue} {\cal T}_0}$, $u\in \color{blue}A_{\rm pw}\color{black}(\R^n; \R^d)$ and  $t\in \biggl[0, {2\delta\over 4\sigma(u)(2\|Du\|_{L^\infty} +1)+\delta }\biggr]$, where $\widetilde{g}^q_\hom$ is defined by \eqref{ftildeCDA}.
\end{lemma}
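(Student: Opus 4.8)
The plan is to build, for the given piecewise affine map $u=\sum_{i=1}^m(u_{Z_i}+c_i)\chi_{P_i}$, an explicit recovery sequence for $tu$ by gluing together the periodic correctors of $\widetilde{g}^q_{\hom}$ attached to the affine pieces of $u$, and then to pass to the limit first in $\e_h\to 0$, next in the gluing width, and finally in the optimality gap. First I would discard the trivial case: if $\widetilde{g}^q_{\hom}(Z_i)=+\infty$ for some $i$ (with $Z_i:=Du$ on ${\rm int}(P_i)$) the right-hand side is $+\infty$ and there is nothing to prove, so assume $\widetilde{g}^q_{\hom}(Z_i)<+\infty$ for every $i$. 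I record that $g(\cdot,0)\in L^1_{\loc}(\R^n)$ by \eqref{H40}, that $\int_Y g(y,A_k)\,dy<+\infty$ for the vertices $\{A_k\}_{k=1}^{dn}$ of $Q$ by \eqref{hpL}, and that, since $g(y,\cdot)$ is convex and $B_\delta(0)\subset Q={\rm conv}\{A_k\}$, one has $g(y,W)\le\sum_k g(y,A_k)$ for a.e.\ $y$ and every $W\in B_\delta(0)$. Fix $\eta>0$; for each $i$ pick $v_i\in W^{1,q}_\#(Y;\R^d)\cap L^\infty(Y;\R^d)$ with $\int_Y g(y,Z_i+Dv_i)\,dy\le\widetilde{g}^q_{\hom}(Z_i)+\eta$, and put $M:=\max_i\|v_i\|_{L^\infty(Y;\R^d)}$. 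For a small $r>0$, choose cut-off functions $\theta_i^r\in W^{1,\infty}(\R^n)$, $0\le\theta_i^r\le 1$, with $\theta_i^r\equiv 1$ on $(P_i)_{2r}^-$, ${\rm supp}\,\theta_i^r\subset(P_i)_r^+$ and $\|\nabla\theta_i^r\|_{L^\infty}\le C/r$; since $\{P_i\}_{i=1}^m$ is a finite polyhedral decomposition of $\R^n$, for $r$ small enough at most $\sigma(u)$ of the $\theta_i^r$ (with $\sigma(u)$ as in \eqref{def:functionsigmau}) are nonzero at any given point. Define, for $h\in\N$,
\[
w_h:=t\Big(u+\e_h\sum_{i=1}^m\theta_i^r\,v_i\big(\tfrac{\cdot}{\e_h}\big)\Big)\in W^{1,q}_{\loc}(\R^n;\R^d)\cap L^\infty_{\loc}(\R^n;\R^d),
\]
so that $w_h\to tu$ uniformly on $\R^n$, the corrector term having $L^\infty$-norm at most $\sigma(u)M\e_h$.

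The core of the proof is to estimate $g(x/\e_h,Dw_h(x))$ via the convexity in (H2). On the bulk set $(P_i)_{2r}^-$ only $\theta_i^r$ is active and equals $1$, so $Dw_h=t\big(Z_i+Dv_i(\cdot/\e_h)\big)+(1-t)\cdot 0$, whence $g(x/\e_h,Dw_h)\le t\,g\big(x/\e_h,Z_i+Dv_i(x/\e_h)\big)+(1-t)\,g(x/\e_h,0)$. On the collar $\Omega\setminus\bigcup_i(P_i)_{2r}^-$, writing $i$ for the index with $x\in{\rm int}(P_i)$, a direct computation gives
\[
\tfrac1t\,Dw_h=\sum_k\theta_k^r\big(Z_k+Dv_k(\cdot/\e_h)\big)+R_h,
\]
where $R_h$ is a matrix field assembled from the mismatches $Z_i-Z_k$ of neighbouring gradients, from the defect $1-\sum_k\theta_k^r$ of the cut-off weights, and from the terms $\e_h\,v_k(\cdot/\e_h)\otimes\nabla\theta_k^r$; since at most $\sigma(u)$ cut-offs are active and $\e_h/r\to 0$, one checks that $\|R_h(x)\|$ is controlled, for $h$ large, by a quantity of order $\sigma(u)\big(\|Du\|_{L^\infty(\R^n)}+1\big)$. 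Because the allowed range of $t$ makes $t\sum_k\theta_k^r\le t\,\sigma(u)<1$, the matrix $Dw_h$ is the convex combination $\sum_k(t\theta_k^r)\big(Z_k+Dv_k(\cdot/\e_h)\big)+\big(1-t\sum_k\theta_k^r\big)B_h$, with $B_h:=tR_h\big/\big(1-t\sum_k\theta_k^r\big)$, and the precise threshold for $t$ in the statement is exactly what forces $B_h(x)\in B_\delta(0)\subset Q$ once $h$ is large; writing $B_h(x)=\sum_l s_l^h(x)A_l$ as above, convexity yields, a.e.\ on the collar and for $h$ large,
\[
g(x/\e_h,Dw_h)\le t\sum_k\theta_k^r\,g\big(x/\e_h,Z_k+Dv_k(x/\e_h)\big)+\sum_l g(x/\e_h,A_l).
\]

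It then remains to integrate over $\Omega$ and pass to the limit. The integrands $y\mapsto g(y,Z_k+Dv_k(y))$, $g(\cdot,0)$ and $g(\cdot,A_l)$ are $Y$-periodic and belong to $L^1(Y)$ (respectively by the choice of $v_k$, by \eqref{H40} and by \eqref{hpL}), so the Riemann--Lebesgue lemma yields $\int_E\phi(x/\e_h)\,dx\to\mathcal L^n(E)\int_Y\phi$ for every measurable $E\subset\Omega$. Taking $\limsup_{h\to+\infty}$ with $r$ fixed, the bulk part produces the terms $t\sum_i\mathcal L^n\big((P_i)_{2r}^-\cap\Omega\big)(\widetilde{g}^q_{\hom}(Z_i)+\eta)$ and $(1-t)\mathcal L^n(\Omega)\int_Y g(y,0)\,dy$, while the collar part is bounded by $C(u,g,\eta)\,\mathcal L^n\big(\Omega\setminus\bigcup_i(P_i)_{2r}^-\big)$ with $C(u,g,\eta)$ independent of $r$; hence
\[
\widetilde{G}''(\Omega,tu)\le t\sum_i\mathcal L^n\big((P_i)_{2r}^-\cap\Omega\big)\big(\widetilde{g}^q_{\hom}(Z_i)+\eta\big)+(1-t)\mathcal L^n(\Omega)\int_Y g(y,0)\,dy+C(u,g,\eta)\,\mathcal L^n\Big(\Omega\setminus{\textstyle\bigcup_i}(P_i)_{2r}^-\Big).
\]
Letting $r\to 0$, the collar measure vanishes and $\mathcal L^n\big((P_i)_{2r}^-\cap\Omega\big)\uparrow\mathcal L^n\big({\rm int}(P_i)\cap\Omega\big)$; since $\sum_i\mathcal L^n\big({\rm int}(P_i)\cap\Omega\big)\widetilde{g}^q_{\hom}(Z_i)=\int_\Omega\widetilde{g}^q_{\hom}(Du)\,dx$, letting finally $\eta\to 0$ gives the asserted inequality.

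The step I expect to be the main obstacle is the collar estimate. Since $g(y,\cdot)$ may equal $+\infty$ off a proper subset of $\R^{d\times n}$, the gradient of the glued map must never leave ${\rm dom}\,g(y,\cdot)$; the error matrix $B_h$ produced by neighbouring-corrector mismatch and by $\nabla\theta_k^r$ must therefore be pushed into the cube $Q$ and then dominated through its vertices using \eqref{hpL} and the convexity of $g$. This is precisely what dictates the (slightly suboptimal, compared with the scalar \cite[Lemma 12.3.1]{CDA02}) range of $t$, and it is the point where the scalar cut-off construction of \cite{CDA02} has to be replaced, component-wise truncations being unavailable in the vectorial setting.
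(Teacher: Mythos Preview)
Your proposal is essentially the same strategy as the paper's proof: both glue the periodic correctors $v_i$ of $\widetilde g^q_{\hom}(Z_i)$ to the affine pieces via cut-offs, split $Dw_h$ by the convexity of $g(y,\cdot)$ into a ``principal'' convex combination of the $Z_k+Dv_k(\cdot/\e_h)$ and an error matrix, force that error into $B_\delta(0)\subset Q$ through the restriction on $t$, dominate it via the vertices $A_l$ using (H4), and then apply Riemann--Lebesgue before shrinking the collar.

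The one genuine difference is the choice of cut-offs. The paper normalises them into a partition of unity $\gamma_{i,\e}=\psi_{i,\e}/\sum_j\psi_{j,\e}$ and builds the intermediate sequence $w_{h,\e}=\sum_i(u_{Z_i}+c_i+v^i_h)\gamma_{i,\e}$; this converges, as $h\to\infty$, to $w_\e\neq u$, and an extra lower-semicontinuity step for $G''$ is then used to pass $\e\to 0$. Your raw cut-offs $\theta_i^r$ let $w_h\to tu$ directly, which is slightly more streamlined, at the cost of a coarser error term: your remainder $R_h$ contains the undamped mismatch $Z_i-\sum_k\theta_k^rZ_k$ (of size $\lesssim\sigma(u)\|Du\|_{L^\infty}$), whereas the paper, exploiting $\sum_i\nabla\gamma_{i,\e}=0$, sees instead $(u_{Z_i}+c_i-u)\otimes\nabla\gamma_{i,\e}$, where the first factor is $O(\e)$ on the collar. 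Both are handled by the same mechanism, and a quick comparison of constants shows that the statement's range for $t$ is (slightly) more restrictive than what your decomposition actually requires, so your claim that ``the precise threshold \dots\ is exactly what forces $B_h\in B_\delta(0)$'' is safe in the direction you need, even though the threshold was tailored to the paper's decomposition rather than yours.
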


\begin{proof}
  First, fix $\Omega\in{\color{blue} {\cal T}_0}$ and $u{(x)}=\sum_{i=1}^{m}(u_{Z_i}(x)+c_i)\chi_{P_i}(x)$, according to \eqref{aff-u-Zi}. For any $i\in\{1,\dots, m\}$, let $\Omega_i=\Omega\cap {\rm int}P_i$ be an open cover of $\Omega$. For $\e>0$ and $i\in\{1,\dots, m\}$, we introduce the following sets 
        \begin{align}
            \Omega_{i,\e}^{-} &\coloneqq \{x\in\Omega_i : {\rm dist}(x, \partial\Omega_i)>\e \},\notag\\
            \Omega_{i,\e}^{+} &\coloneqq \{x\in \mathbb{R}^n : {\rm dist}(x, \Omega_i)<\e \}.\notag
        \end{align}
    Fix $t\in [0,1)$ to be chosen later. Assume that 
         \begin{equation}
             \notag
             \sum_{i=1}^{m} \mathcal L^n(\Omega_i)\widetilde{g}^q_\hom(Z_i) = \int_{\Omega}\widetilde{g}^q_\hom(Du)dx<\infty.
         \end{equation}
    This implies that $Z_i\in{\rm dom}\widetilde{g}^q_\hom$ for every $i\in\{1,\dots, m\}$. Hence, for fixed $\theta\in (0, +\infty)$ and $i\in\{1, \dots, m\}$, there exists $v^i\in W^{1,q}_{\#}(Y; \R^d)\cap L^\infty(Y; \R^d)$ such that 
          \begin{equation}\label{rec}
              \int_Y g(y, Z_i+ Dv^i) dy \leq \widetilde{g}^q_\hom (Z_i) +\theta.
          \end{equation}
    For every $i\in\{1, \dots, m\}$, set $v^i_h(\cdot) = {1\over h} v^i(h\cdot)$, with $h\in\N$. It follows \color{blue}by Riemann-Lebesgue lemma and \eqref{rec} \color{black}that
        \begin{equation}
            \notag
            \lim_{h\to+\infty} \int_{\Omega\cap \Omega_{i, \e}^{+}} g\left({x\over \e_h}, Z_i+Dv^i_h\right)dx \leq {\cal L}^n(\Omega\cap \Omega^{+}_{i,\e}) (\widetilde{g}^q_\hom(Z_i) + \theta). 
        \end{equation}
    The aim is to approximate the fixed piecewise affine function $u\in \color{blue}A_{\rm pw}\color{black}(\R^n; \R^d)$ with a sequence of functions $\{w_{\e}\}_{\e}$ built by means of a suitable partition of unity subordinate to the open cover $\{\Omega_i\}_{i=1}^{m}$. We proceed in three steps: in the first one, we construct the partition of unity using an appropriate cut-off function and the family $\{w_\e\}_\e$, in the second one, we estimate the functional $G''(\Omega, t w_\e)$, while in the last step, we show our claim passing to the limit on $\varepsilon \to 0$.
    \medskip
    
    {\bf STEP 1.} For $\e>0$ sufficiently small and for $i\in\{1,\dots, m\}$, let $\psi_{i,\e}$ be a cut-off function such that $\psi_{i, \e}(x)\equiv 0$ a.e. in $\R^n \setminus \Omega_{i, \e}^{+}$, $\psi_{i, \e}(x)\equiv 1$ a.e. in $\overline{\Omega_{i, {\e\over 2}}^{+}}$, $0\leq \psi_{i, \e} (x)\leq 1$ a.e. in $\Omega_{i, \e}^{+}$ and 
        \begin{equation}
            \notag
            \|\nabla \psi_{i, \e} \|_{L^\infty(\R^n; \R^n)}\leq {C\over {\rm dist}(\Omega_{i, {\e\over 2}}^{+}, \partial \Omega_{i, \e}^{+})}.
        \end{equation}
    Note that 
       \begin{equation}
           \notag
           \sum_{i=1}^{m} \psi_{i, \e} (x)\geq 1 \qquad \mbox{for a.e.  } x\in \bigcup_{i=1}^{m} \Omega_{i, {\e\over 2}}^{+}.
       \end{equation}
    Then, for fixed $\e>0$ small enough, we define the partition of unity $\{\gamma_{i, \e}\}_{i=1}^{m}$ subordinate to $\{\Omega_i\}_{i=1}^{m}$ by
         \begin{equation}
             \notag
             \gamma_{i, \e}(x)\coloneqq {1\over \sum_{i=1}^{m}\psi_{i, \e} (x)}\psi_{i, \e} (x). 
         \end{equation}
    For fixed $\e>0$, let $\{w_{h, \e}\}_{h\in\N}$ be the sequence defined by 
        \begin{equation}
            \notag
            w_{h, \e}(x) \coloneqq \sum_{i=1}^{m} \left(u_{Z_i}(x) + c_i + v^i_h(x)\right)\gamma_{i, \e}(x).
        \end{equation}
    Since $v^i_h$ goes to $0$ in $L^\infty(\R^n; \R^d)$ as $h\to +\infty$,  we immediately get that, for every $\e>0$ sufficiently small,  $w_{h, \e}$ converges in $L^\infty(\R^n; \R^d)$ as $h\to+\infty$ to the function $w_\e$ given by  
         \begin{equation}
             \notag
             w_\e(x)\coloneqq \sum_{i=1}^{m} \left(u_{Z_i}(x)+ c_i\right) \gamma_{i, \e}(x).
         \end{equation}
    \medskip

    {\bf STEP 2.} Thanks to the convergence $w_{h, \e}\to w_{\e}$ as $h\to+\infty$ combined with the definition of $G''(\Omega, \cdot)$ and $(H2)$, for any $\e>0$ small enough, we deduce that  
        \begin{align}
            &G''(\Omega, tw_\e) \leq \limsup_{h\to+\infty} \int_{\Omega} g\left({x\over \e_h}, tDw_{h, \e}(x)\right) dx\notag\\
            &\quad=\limsup_{h\to+\infty} \int_{\Omega} g\biggl({x\over \e_h}, t\sum_{i=1}^{m} (Z_i+Dv^i_h(x))\gamma_{i, \e}(x) \notag\\
            &\hspace{5cm}+ (1-t){t\over (1-t)} \sum_{i=1}^{m} (u_{Z_i}(x)+c_i+v^i_h(x))\otimes \nabla \gamma_{i, \e}(x)\biggr) dx\notag
            \end{align}
            \begin{align}
            &\quad \leq t\limsup_{h\to+\infty} \int_{\Omega} g\left({x\over \e_h}, \sum_{i=1}^{m}(Z_i+Dv^i_h(x))\gamma_{i, \e}(x) \right)dx\notag\\
            &\qquad + (1-t) \limsup_{h\to+\infty} \int_{\Omega} g\left({x\over\e_h}, {t\over (1-t)} \sum_{i=1}^{m}(u_{Z_i}(x)+c_i+v^i_h(x))\otimes \nabla\gamma_{i, \e}(x)\right) dx\notag\\
            &\quad\leq t\sum_{i=1}^{m}\limsup_{h\to+\infty}\int_{\Omega\cap\Omega_{i, \e}^{+}} g\left({x\over \e_h}, Z_i+Dv^i_h(x) \right)dx\notag\\
            &\qquad  + (1-t) \limsup_{h\to+\infty} \int_{\Omega} g\left({x\over\e_h}, {t\over (1-t)} \sum_{i=1}^{m}(u_{Z_i}(x)+ c_i+v^i_h(x))\otimes \nabla\gamma_{i, \e}(x)\right) dx\label{eq1}
        \end{align}
    In view of Step 1, we known that $\sum_{i=1}^m \nabla \gamma_{i, \e}\equiv 0$ a.e. in $\Omega$ along with the fact that $\Omega= \left[\Omega\setminus\bigcup_{j=1}^m\Omega_{j,\e}^{-} \right]\cup \left[\bigcup_{j=1}^m\Omega_{j,\e}^{-} \right]$,  the last limsup in the right-hand side of \eqref{eq1} may be estimated as follows\\
         \begin{align}
           \limsup_{h\to+\infty} &\int_{\Omega} g\left({x\over\e_h}, {t\over (1-t)} \sum_{i=1}^{m}(u_{Z_i}(x)+c_i+v^i_h(x))\otimes \nabla  \gamma_{i, \e}(x)\right)  \notag\\
           &=\limsup_{h\to+\infty}\int_{\Omega}g\left({x\over \e_h}, {t\over (1-t)} \sum_{i=1}^{m}(u_{Z_i}(x)+c_i+v^i_h(x)-u(x))\otimes \nabla  \gamma_{i, \e}(x)\right) dx\notag\\
           &\leq \sum_{j=1}^{m}\limsup_{h\to+\infty}\int_{\Omega_{j, \e}^{-}   } g\left({x\over \e_h}, 0 \right)dx\notag\\
           &\quad + \sum_{j=1}^{m}\limsup_{h\to+\infty}\int_{\Omega_j\setminus\Omega_{j, \e}^{-}}g\left({x\over \e_h} ,{t\over (1-t)} \sum_{i=1}^{m}(u_{Z_i}(x)+c_i+v^i_h(x)-u(x))\otimes \nabla \gamma_{i, \e}(x)\right)dx. \label{eq2}
         \end{align}
    Now, for any $j\in\{1, \dots, m\}$, let $\nu_\e(\Omega_1, \dots, \Omega_m)(\Omega_j)$ be the number of the elements $\{\Omega_1, \dots, \Omega_m \}$ whose distance from $\Omega_j$ is less than $\e$.  Then, we define $\sigma_\e=\sigma_\e(\Omega_1, \dots, \Omega_m)$ by
        \begin{equation}
            \notag
            \sigma_\e\coloneqq \sup_{j\in\{1,\dots, m\}} \nu_\e(\Omega_1, \dots, \Omega_m)(\Omega_j).
        \end{equation}
    Since for {\color{blue} any} fixed $\e>0$ and for any $j\in\{1,\dots, m\}$, $\nu_\e(\Omega_1, \dots, \Omega_m)(\Omega_j)\leq \sigma_\e$, we denote by $\{\Omega_{i_1}, \dots, \Omega_{i_{\sigma_\e}}\}$ a subset of $\{\Omega_1,\dots, \Omega_m\}$ containing all the sets $\Omega_i$ satisfying ${\rm dist}(\Omega_j, \Omega_i)<\e$. Therefore,  due to the convexity of $g$, we deduce that, for any $j\in\{1,\dots,m\}$ and for any $\e>0$ sufficiently small,
        \begin{align}
            &\int_{\Omega_j\setminus\Omega_{j, \e}^{-}}g\left({x\over\e_h}, {t\over (1-t)} \sum_{i=1}^{m}(u_{Z_i}(x)+c_i+v^i_h(x)-u(x))\otimes \nabla  \gamma_{i, \e}(x)\right)dx\notag\\
            &\quad =\int_{\Omega_j\setminus\Omega_{j, \e}^{-}}g\left({x\over\e_h}, {t\over (1-t)} \sum_{k=1}^{\sigma_\e}{1\over \sigma_\e}\sigma_\e(u_{Z_{i_k}}(x)+c_{i_k}+v^{i_k}_h(x)-u(x))\otimes \nabla \gamma_{i_k, \e}(x)\right)dx\notag
            \end{align}
            \begin{align}
            &\quad\leq\sum_{k=1}^{\sigma_\e}{1\over \sigma_\e}\int_{\Omega_j\setminus\Omega_{j, \e}^{-}}g\left({x\over \e_h}, {t\over (1-t)} \sigma_\e(u_{Z_{i_k}}(x)+c_{i_k}+v^{i_k}_h(x)-u(x))\otimes \nabla \gamma_{i_k, \e}(x)\right)dx\notag\\
            &\quad \leq \sum_{k=1}^{\sigma_\e}{1\over \sigma_\e}\int_{(\Omega_j\setminus\Omega_{j, \e}^{-})\cap\Omega^{+}_{i,\e}}g\left({x\over \e_h}, {t\over (1-t)} \sigma_\e(u_{Z_{i_k}}(x)+c_{i_k}+v^{i_k}_h(x)-u(x))\otimes \nabla \gamma_{i_k, \e}(x)\right)dx\notag\\
            &\qquad +\int_{\Omega_j\setminus\Omega^{-}_{j, \e}} g\left({x\over\e_h}, 0\right)dx.\notag
        \end{align}
    Now, note that there exists $\e(u)\in(0, +\infty)$ such that $\sigma_\e\leq \sigma(u)$ for any $\e\in(0, \e(u))$, where $\sigma(u)$ is given by \eqref{def:functionsigmau}.  Fix $\e\in(0, \e(u))$. Since, by \color{blue} Taylor's expansion and the definition of $v^i_h$, we obtain that\color{black} 
        \begin{align}
            \left\| \sigma_\e{t\over 1-t}(u_{Z_{i}}(\cdot)+ c_i+v^i_h(\cdot)-u(\cdot))  \right\|_{L^\infty(\Omega\cap\Omega^{+}_{i,\e}; \R^{d\times n}) }\notag \\\leq \sigma_\e {t\over 1-t}\left(2\|Du\|_{L^\infty(\Omega\cap\Omega^{+}_{i,\e}; \R^{d\times n}) } +1\right)\e\notag\\
            \leq \sigma(u) {t\over 1-t}\left(2\|Du\|_{L^\infty(\Omega\cap\Omega^{+}_{i,\e}; \R^{d\times n}) } +1\right)\e,\label{torefer}
        \end{align}
    we may choose $t\in [0,1)$ such that 
          \begin{equation}
              \notag
              \sigma(u){t\over 1-t} \left(2\|Du\|_{L^\infty(\Omega\cap\Omega^{+}_{i,\e}; \R^{d\times n}) } +1\right) \leq{\delta\over 2},
          \end{equation}
     with $\delta$ being given by assumption (H3). In other words, 
        \begin{equation}
            \notag
            t\leq 
            {2\delta \over \sigma(u)\left(2\|Du\|_{L^\infty(\Omega\cap\Omega^{+}_{i,\e}; \R^{d\times n}) } +1\right) +\delta}.
        \end{equation}
    Hence, by \eqref{torefer}, for this choice of $t$, we obtain that 
        \begin{align}\notag
            \left\| \sigma_\e{t\over 1-t}(u_{Z_{i}}(\cdot)+c_i+v^i_h(\cdot)-u(\cdot))  \right\|_{L^\infty(\Omega\cap\Omega^{+}_{i,\e};\R^{d \times n})} \leq{\delta\over 2}\e,
        \end{align}
    for $h$ sufficiently large and for any $i\in\{1, \dots, m\}$. This ensures that there exists $h_t$ such that 
       \begin{align}
           \notag
           \sigma_\e{t\over 1-t}(u_{Z_{i}}(x)+c_i+v^i_h(x)-u(x))\otimes \nabla \gamma_{i, \e} \in B_{\delta}(0),
       \end{align}
    for any $h>h_t$ and for any $\e>0$ sufficiently small. Thanks to \eqref{hpL}, for any $i\in\{1, \dots, m\}$, $\sigma_\e{t\over 1-t}(u_{Z_{i}}+c_i + v^i_h-u)\otimes \nabla \gamma_{i, \e}$ may be represented as a convex combination of the vertices $\{A_l\}_{l=1}^{{dn}}$ of the cube $Q$, i.e., for any $i\in\{1,\dots, m\}$ there exist $s^{i,h}_1(x), \dots, s^{i,h}_{dn}(x)\in(0,1)$ such that 
        \begin{equation}
            \notag
            \sigma_\e{t\over 1-t}(u_{Z_{i}}+ c_i +v^i_h-u)\otimes \nabla \gamma_{i, \e} =\sum_{l=1}^{dn} s^{i,h}_l A_l.
        \end{equation}
      
    Therefore, for any $\e\in(0, \e(u))$, once more by exploiting the convexity of $g$, we deduce
        \begin{align}
            &\limsup_{h\to+\infty} \sum_{k=1}^{\sigma_\e}{1\over \sigma_\e}\int_{(\Omega_j\setminus\Omega_{j, \e}^{-})\cap\Omega^{+}_{j,\e}}g\left({x\over \e_h}, {t\over (1-t)} \sigma_\e(u_{Z_{i_k}}(x)+ c_{i_k}+v^{i_k}_h(x)-u(x))\otimes \nabla  \gamma_{i_k, \e}(x)\right)dx\notag\\
            &\quad = \limsup_{h\to+\infty} \sum_{k=1}^{\sigma_\e}{1\over \sigma_\e} \int_{(\Omega_j\setminus\Omega_{j, \e}^{-})\cap\Omega^{+}_{j,\e}}g\left({x\over \e_h}, \sum_{l=1}^{{dn}} s_l^{i_k, h_k}(x) A_l \right)dx\notag\\
            &\quad \leq \limsup_{h\to+\infty} \sum_{k=1}^{\sigma_\e}{1\over \sigma_\e}  \sum_{l=1}^{{dn}} \int_{(\Omega_j\setminus\Omega_{j, \e}^{-})\cap\Omega^{+}_{j,\e}} s_l^{i_k,h_k}(x)g\left({x\over \e_h}, A_l \right)dx\notag\\ 
            &\quad \leq {\cal L}^n(\Omega_j\setminus\Omega_{j,\e}^{-}) \sum_{l=1}^{{dn}}\int_Y g(y, A_l)dy.\notag
        \end{align}
   This combined with \eqref{eq2} implies that  
         \begin{align}
             \limsup_{h\to+\infty} &\int_{\Omega} g\left({x\over\e_h}, {t\over (1-t)} \sum_{i=1}^{m}(u_{Z_{i}}(x)+c_i+v^i_h(x))\otimes \nabla \gamma_{i, \e}(x)\right)  \notag\\
           &\leq \sum_{j=1}^{m}\limsup_{h\to+\infty}\int_{ \Omega_{j, \e}^{-} } g\left({x\over \e_h}, 0 \right)dx\notag\\
           &\quad + \sum_{j=1}^{m}\limsup_{h\to+\infty}\int_{\Omega_j\setminus\Omega_{j, \e}^{-}}g\left({x\over \e_h} ,{t\over (1-t)} \sum_{i=1}^{m}(u_{Z_{i}(x)+c_i}+v^i_h(x)-u(x))\otimes \nabla  \gamma_{i, \e}(x)\right)dx\notag\\
           &\quad \leq\sum_{j=1}^{m}\left[ {\cal L}^n(\Omega_{j, \e}^{-}) \int_Y g(y, 0)dy +  {\cal L}^n(\Omega_j\setminus\Omega_{j,\e}^{-}) \sum_{l=1}^{{dn}}\int_Y g(y, A_l)dy\right]\notag.
         \end{align}
    Hence, from \eqref{eq1}, we conclude that 
        \begin{align}
            G''(\Omega, tw_\e) &\leq t\sum_{i=1}^{m}{\cal L}^n(\Omega\cap\Omega_{i,\e}^{+})(\widetilde{g}^q_\hom(Z_i) + \theta)\notag\\
            &\quad + (1-t) \sum_{j=1}^{m} \left[{\cal L}^n(\Omega_{j,\e}^{-})  \int_Y g(y, 0) dy+ {\cal L}^n(\Omega{\color{blue}_j}\setminus\Omega_{j,\e}^{-}) \sum_{l=1}^{{dn}} \int_Y g(y, A_l)dy\right], \label{eq3}
        \end{align}
    for any $\e\in(0, \e(u))$, concluding the Step 2.
    \medskip
    
    {\bf STEP 3.} First, note that, for any $\e\in(0, \e(u))$,
          \begin{align}
              \|w_\e-u\|_{L^\infty(\Omega; \R^d)}&=\left \|\sum_{i=1}^{m}(u_{Z_{i}}(\cdot)+c_i-u(\cdot))\gamma_{i,\e}(\cdot) \right\|_{L^\infty(\Omega; \R^d)}\notag\\
              &\leq \sum_{i=1}^{m}\|(u_{Z_{i}}(\cdot) +c_i-u(\cdot))\gamma_{i,\e}(\cdot)\|_{L^\infty(\Omega; \R^d)}\notag\\
              &\leq m\left(2\|Du\|_{L^\infty(\Omega; \R^{d\times n})}+1\right)\e,\notag
          \end{align}
    This implies that $w_\e$ converges in $L^\infty(\Omega; \R^d)$ to $u$ as $\e\to 0$. Hence, using estimate \eqref{eq3}, we deduce that 
          \begin{align}
              G''(\Omega, tu) &\leq \liminf_{\e\to 0} G''(\Omega, tw_\e)\notag\\
              &\leq t\sum_{i=1}^{m}{\cal L}^n(\Omega\cap\overline{\Omega_{i}})(\widetilde{g}^q_\hom(Z_i) + \theta)+(1-t){\cal L}^n(\Omega)\int_Yg(y, 0) dy.\notag
          \end{align}
    Taking the limit as $\theta\to 0$, we obtain our claim, as desired.
\end{proof}

In light of Lemma \ref{lemma12.3.1}, we show the finiteness result in the next proposition.

\begin{prop}
\label{Prop 12.3.2}
    Let $g: \R^n\times \R^{d \times n} \to [0, +\infty]$ satisfy {\rm (H1)}-{\rm (H4)}.  
    Let $G''$ be the functional given by \eqref{deflimsup} and $q\in[1,+\infty]$. Then, there exists $r\in (0, \delta)$ and a constant $C>0$ such that 
        \begin{equation}
            \notag
            G''_{-}(\Omega, u)\leq C {\cal L}^n(\Omega),
        \end{equation}
    for any $\Omega\in{\color{blue} {\cal T}_0}$ and for any $u\in W^{1,\infty}_{\#} (Y; \R^d)$ such that $\|Du\|_{L^\infty(\Omega; \R^{d \times n})}\leq r$.
\end{prop}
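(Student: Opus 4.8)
The plan is to deduce the bound from the estimate for piecewise affine functions in Lemma~\ref{lemma12.3.1}, via an approximation argument, exploiting that the conclusion of that lemma is stable as long as the combinatorial quantity $\sigma(\cdot)$ in \eqref{def:functionsigmau} stays bounded, and then reabsorbing the scaling factor $t$ appearing in Lemma~\ref{lemma12.3.1} into the smallness of $\|Du\|_{L^\infty}$.

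I would first fix the constants that will not depend on $\Omega$ or on $u$. By (H3), $B_{2\delta}(0)\subset{\rm int}({\rm dom}\,\widetilde{g}^q_\hom)$; since $\widetilde{g}^q_\hom$ is convex by Proposition~\ref{prop:Proposition 12.1.3}(i) and finite on $B_{2\delta}(0)$, it is locally bounded there, so $M:=\sup_{\overline{B_\delta(0)}}\widetilde{g}^q_\hom<+\infty$, and by \eqref{H40} together with the $Y$-periodicity of $g$ one has $\|g(\cdot,0)\|_{L^1(Y)}<+\infty$. Next I fix a regular simplicial triangulation of $\R^n$ (for instance a Freudenthal--Kuhn triangulation and its dyadic refinements), taken $Y$-periodic, whose associated nodal interpolation operators produce functions $v\in A_{\rm pw}(\R^n;\R^d)$ with $\sigma(v)\le\sigma_n$ for a constant $\sigma_n$ depending only on $n$, with $\|Dv\|_{L^\infty}\le C_n\|D\phi\|_{L^\infty}$ for the interpolant $v$ of a Lipschitz $\phi$ ($C_n$ depending only on $n$), and with $\|v-\phi\|_{L^\infty}\to0$ as the mesh size vanishes. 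Finally I set $t_0:=\delta/(4\sigma_n+\delta)\in(0,1)$, $r:=\min\{\delta/(8\sigma_n C_n),\,\delta^{2}/(C_n(4\sigma_n+\delta))\}\in(0,\delta)$ and $C:=M+\|g(\cdot,0)\|_{L^1(Y)}$, and I claim that this $r$ and this $C$ work.

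To check the claim, let $u\in W^{1,\infty}_\#(Y;\R^d)$ with $\|Du\|_{L^\infty}\le r$ and set $\phi:=u/t_0$, so $\|D\phi\|_{L^\infty}\le r/t_0$. Let $\{v_k\}_k\subset A_{\rm pw}(\R^n;\R^d)$ be the nodal interpolants of $\phi$ on the chosen meshes with size tending to $0$: then $\sigma(v_k)\le\sigma_n$, $\|Dv_k\|_{L^\infty}\le C_n r/t_0\le\delta$ by the choice of $r$, and $v_k\to\phi$ in $L^\infty_{\rm loc}(\R^n;\R^d)$. Since $\sigma(v_k)\le\sigma_n$ and $\|Dv_k\|_{L^\infty}\le C_n r/t_0$, the value $t_0$ lies in the admissibility interval of Lemma~\ref{lemma12.3.1} applied to $v_k$ as soon as $8\sigma_n C_n r+t_0(4\sigma_n+\delta)\le2\delta$, which by the choice of $t_0$ reduces to $8\sigma_n C_n r\le\delta$ and hence holds. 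Consequently Lemma~\ref{lemma12.3.1}, together with $0\le\widetilde{g}^q_\hom(Dv_k)\le M$ a.e.\ on $\Omega$ and $0\le t_0\le1$, gives
\[
\widetilde{G}''(\Omega,t_0 v_k)\ \le\ t_0\int_\Omega\widetilde{g}^q_\hom(Dv_k)\,dx+(1-t_0)\,\mathcal{L}^n(\Omega)\int_Y g(y,0)\,dy\ \le\ C\,\mathcal{L}^n(\Omega).
\]
As $t_0 v_k\to t_0\phi=u$ in $L^\infty_{\rm loc}$ and $\widetilde{G}''(\Omega,\cdot)$ is lower semicontinuous for that convergence (being a $\Gamma(L^\infty)$-limsup), I conclude $\widetilde{G}''(\Omega,u)\le\liminf_{k}\widetilde{G}''(\Omega,t_0 v_k)\le C\,\mathcal{L}^n(\Omega)$, and since $\widetilde{G}''(\cdot,u)$ is an increasing set function, $\widetilde{G}''_-(\Omega,u)\le\widetilde{G}''(\Omega,u)\le C\,\mathcal{L}^n(\Omega)$, which is the asserted estimate.

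The main obstacle I foresee is the construction, in the vectorial setting, of the piecewise affine approximants $v_k$ with $\sigma$ bounded by a purely dimensional constant and with gradient controlled by that of $\phi$ — this is precisely where the scalar construction of \cite[Chapter~12]{CDA02} has to be replaced, and a fixed regular triangulation does the job; the rest is bookkeeping of constants plus the invocation of Lemma~\ref{lemma12.3.1} and the lower semicontinuity of the $\Gamma$-limsup.
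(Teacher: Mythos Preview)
Your approach is essentially the paper's: approximate $u$ by nodal interpolants on a simplicial mesh whose star valence (and hence $\sigma(\cdot)$) is bounded independently of the mesh size, apply Lemma~\ref{lemma12.3.1} with a fixed $t$ absorbed into the smallness of $r$, bound $\widetilde{g}^q_\hom(Dv_k)$ by $\sup_{\overline{B_\delta(0)}}\widetilde{g}^q_\hom$, and pass to the limit using lower semicontinuity of the $\Gamma$-limsup. The paper differs only cosmetically: it first truncates $u$ to zero outside a cube $Q\supset\supset\Omega$, triangulates $Q$, covers $\R^n\setminus Q$ by finitely many polyhedra, and then applies Lemma~\ref{lemma12.3.1} to the rescaled interpolant $\tfrac{\delta}{Cr}u_h$ with $t=\tfrac{Cr}{\delta}$ instead of interpolating $u/t_0$.

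The one technical point you should patch is that a global $Y$-periodic Freudenthal--Kuhn interpolant has \emph{infinitely} many affine pieces and is therefore not in $A_{\rm pw}(\R^n;\R^d)$ as defined in the paper (which requires $\bigcup_{i=1}^m P_i=\R^n$ with $m$ finite), so Lemma~\ref{lemma12.3.1} does not literally apply to your $v_k$. The paper's cure is exactly the truncation step: work on a cube $Q\supset\supset\Omega$, extend by finitely many polyhedra outside $Q$ with $v_k$ constant there, and observe that neither $G''(\Omega,\cdot)$ nor the bound from Lemma~\ref{lemma12.3.1} sees the exterior. With that adjustment your argument goes through verbatim.
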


\begin{proof}
    Fixed $\Omega\in{\color{blue} {\cal T}_0}$ and let $Q$ be an open cube of $\R^n$ such that $\Omega\subset\subset Q$. Let $r\in (0,+\infty)$ to be chosen later and let $u\in W^{1,\infty}_{\#}(\R^n; \R^d)$ such that $\|Du\|_{L^\infty(\Omega;\R^{d \times n})}\leq r$.\\
    It is not restrictive to assume that $u=0$ in $\R^n\setminus Q$. For some $l\in\N$, let $S_1, \dots, S_l\subset \R^n\setminus Q$ be polyhedral sets with pairwise disjoint interiors such that ${\cal L}^n((\R^n\setminus Q)\setminus \bigcup_{i=1}^{l}S_i)=0$. Let $P_1, \dots, P_m\subset Q$ be $n$-simplexes with pairwise disjoint interiors such that $Q=\bigcup_{i=1}^{m} P_i$. For every $h\in\N$, let $P_1^h, \dots, P^h_{m^n}$ be $n$-simplexes obtained by taking the ${1\over h}$-replies of $P_1, \dots, P_m$ repeated ${1\over h}Q$-periodically so that $Q=\bigcup_{i=1}^{m^n}P^h_i$.
    For any $h\in\N$, let $u_h\in \color{blue}A_{\rm pw}\color{black}(\R^n; \R^d)$ be such that $u_h$ is affine on each $n$-simplex of $\{P_1^h, \dots, P_{m^n}^h\}$, equal to $u$ on the vertices of the elements of $\{P_1^h, \dots, P_{m^n}^h\}$ and equal to $0$ in each element of $\{S_1, \dots, S_l\}$. Since, for any $h\in\N$ and for any $i\in\{1, \dots, m^n\}$, $P_i^h$ intersects at most $m^n$ elements of $\{P_1^h, \dots, P_{m^n}^h\}$, we get that 
        \begin{itemize}
            \item[(i)] $\sigma(u_h)\leq m^n+l$, with $\sigma(u)$ being given by \eqref{def:functionsigmau};
            \item[(ii)] $u_h$ converges in $L^\infty(\R^n; \R^d)$ to $u$ as $h\to\infty$;
            \item[(iii)] $\|Du_h\|_{L^\infty(\Omega; \R^{d\times n})}\leq C\|Du\|_{L^\infty(\Omega; \R^{d\times n})}$ for any $h\in\N$.
        \end{itemize}
        
    Hence, by items (i) and (iii), we deduce that 
       \begin{equation}
           \notag
           {\delta\over 4(m^n+l)(2\delta+1)+\delta} \leq {\delta\over 4\sigma(u_h)\biggl(\|{\delta\over Cr} Du_h\|_{L^\infty(\Omega; \R^{d\times n})} +1 \biggr)+\delta}, 
       \end{equation}
       where $C$ is the constant appearing in (iii).
    An application of Lemma \ref{lemma12.3.1} yields to  
          \begin{equation}
              \label{eq4}
              G^{''} \left (\Omega, t{\delta\over C r} u_h \right ) \leq t\int_{\Omega}\widetilde{g}^q_\hom\left({\delta\over Cr}Du_h\right)dx+(1-t){\cal L}^n(\Omega)\int_Y g(y,0)dy,               
          \end{equation}
    for any $h\in\N$ and for any $t\in\left[0, {\delta\over 4(m^n+l)(2\delta+1)+\delta} \right]$. 
    We can pick $t={Cr\over\delta}$ if and only if 
       \begin{equation}
           \notag
           r\leq {2\delta^2\over c\left[4(m^n+l)(2\delta+1)+\delta\right]}.
       \end{equation}
    Thus, choosing $r$ as above in \eqref{eq4}, we deduce that, for any $h\in\N$,
       \begin{equation}
       \label{eq5}
           G''(\Omega, u_h)\leq {Cr\over\delta} \int_{\Omega} \widetilde{g}^q_\hom\left( {\delta\over Cr}Du_h\right) + \left(1-{Cr\over\delta}\right) {\cal L}^n(\Omega) \int_Y g(y,0)dy. 
       \end{equation}
    From the fact that $\|Du_h\|_{L^\infty(\Omega; \R^{d\times n})}\leq Cr$, it follows that 
         \begin{equation}
             \label{eq6}
             \left\| {\delta\over Cr} Du_h\right\|_{L^\infty(\Omega; \R^{d\times n})}\leq \delta \qquad\mbox{for any } h\in\N,
         \end{equation}
    namely, for any $h\in\N$, ${\delta\over Cr} Du_h\in B_\delta(0)$. This combined with (H3) ensures us that $\widetilde{g}^q_\hom$ is bounded in $B_\delta(0)$. Finally, from \eqref{eq5} and \eqref{eq6}, we conclude that 
         \begin{equation}
             \notag
             G''(\Omega, u_h) \leq \left[\max_{\overline{B_\delta(0)}}\widetilde{g}^q_\hom  + \int_Y g(y,0)dy \right]{\cal L}^n(\Omega) .
         \end{equation}
    Since $u_h$ converges to $u$ as $h\to+\infty$ and in view of the lower semicontinuity of $G''$, it follows that 
          \begin{align}
              G''(\Omega, u)&\leq \limsup_{h\to+\infty} G''(\Omega, u_h)\notag\\
              &\leq \left[\max_{\overline{B_\delta(0)}}\widetilde{g}^q_\hom + \int_Y g(y,0)dy\right]{\cal L}^n(\Omega),\notag             
          \end{align}
    which concludes the proof.
    \end{proof}

\subsection{Representation on Linear Functions and a blow-up condition}


\color{black} In this subsection, we aim to show that, for any bounded open subset $\Omega$ of $\R^n$,  it holds that $G'(\Omega, \cdot) = G''(\Omega, \cdot)$ on the space of the linear functions and it possible to represent their common value as an integral.

\par The next proposition claims an upper bound for the $\Gamma$-limsup on the space of linear functions and it is the analogue of \cite[Lemma 12.4.1]{CDA02} in the vectorial case. The proof follows the same arguments as the one in the scalar setting and for this reason, we skip it.

\begin{prop}
    \label{Lemma 12.4.1} 
    Let $g: \mathbb R^n \times \mathbb R^{d\times n} \to [0,+\infty]$ satisfy {\rm (H1)}-{\rm (H2)}, let $\widetilde{g}^q_\hom$ be defined in \eqref{ftildeCDA} and let $\{G_{\varepsilon}\}_\varepsilon$ be the functionals  defined by \eqref{functCDA}, with $G''$ as in \eqref{deflimsup}.  \color{black}
     We have that
         \begin{equation}
             \notag
             G''(\Omega, u_Z)\leq {\mathcal L}^n(\Omega)(\widetilde{g}^q_\hom)^{\rm ls}(Z),
         \end{equation}
for any $\Omega\in {\color{blue} {\cal T}_0}$ and $Z\in\R^{d\times n}$.
\end{prop}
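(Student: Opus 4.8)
The plan is to produce an explicit oscillating recovery sequence for the linear map $u_Z$, reducing matters to the one-cell problem \eqref{ftildeCDA}, and then to pass to the lower semicontinuous envelope in $Z$. First I would treat the case $Z\in{\rm dom}\,\widetilde{g}^q_{\hom}$ and establish the sharper bound $G''(\Omega, u_Z)\le\mathcal L^n(\Omega)\,\widetilde{g}^q_{\hom}(Z)$ (when $Z\notin{\rm dom}\,\widetilde{g}^q_{\hom}$ this is vacuous). Fixing $\theta>0$, by the definition \eqref{ftildeCDA} I pick $v\in W^{1,q}_{\#}(Y;\R^d)\cap L^\infty(Y;\R^d)$ with $\int_Y g(y, Z+Dv(y))\,dy\le\widetilde{g}^q_{\hom}(Z)+\theta$, and I set $\phi:=g(\cdot, Z+Dv(\cdot))$, which is a nonnegative, $Y$-periodic function in $L^1(Y)$.

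The recovery sequence I have in mind is $u_h(x):=u_Z(x)+\e_h\,v(x/\e_h)$, with $v$ extended $Y$-periodically to $\R^n$. Then $u_h\in W^{1,q}_{\loc}(\R^n;\R^d)\cap L^\infty_{\loc}(\R^n;\R^d)$, one has $\|u_h-u_Z\|_{L^\infty(K)}\le\e_h\|v\|_{L^\infty(Y)}\to 0$ on every compact $K$, so $u_h\to u_Z$ in $L^\infty_{\loc}(\R^n;\R^d)$, and $Du_h(x)=Z+Dv(x/\e_h)$. Therefore
\[
G_{\e_h}(\Omega, u_h)=\int_\Omega \phi(x/\e_h)\,dx\;\longrightarrow\;\mathcal L^n(\Omega)\int_Y\phi(y)\,dy\;\le\;\mathcal L^n(\Omega)\bigl(\widetilde{g}^q_{\hom}(Z)+\theta\bigr)\quad\text{as }h\to\infty,
\]
by the Riemann--Lebesgue lemma. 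Inserting $\{u_h\}_h$ into the definition \eqref{deflimsup} of $G''$ gives $G''(\Omega, u_Z)\le\limsup_{h\to\infty}G_{\e_h}(\Omega, u_h)\le\mathcal L^n(\Omega)(\widetilde{g}^q_{\hom}(Z)+\theta)$, and letting $\theta\to 0^+$ yields $G''(\Omega, u_Z)\le\mathcal L^n(\Omega)\,\widetilde{g}^q_{\hom}(Z)$.

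Finally I would relax in $Z$: assuming $(\widetilde{g}^q_{\hom})^{\rm ls}(Z)<+\infty$ (otherwise nothing to prove), I choose $Z_k\to Z$ with $\widetilde{g}^q_{\hom}(Z_k)\to(\widetilde{g}^q_{\hom})^{\rm ls}(Z)$, note that $u_{Z_k}\to u_Z$ in $L^\infty_{\loc}(\R^n;\R^d)$, and use that $G''(\Omega,\cdot)$ is lower semicontinuous for $L^\infty$-convergence, being a $\Gamma$-$\limsup$ (cf.\ \cite[Proposition 6.8]{DM93}), to conclude
\[
G''(\Omega, u_Z)\le\liminf_{k\to\infty}G''(\Omega, u_{Z_k})\le\mathcal L^n(\Omega)\liminf_{k\to\infty}\widetilde{g}^q_{\hom}(Z_k)=\mathcal L^n(\Omega)(\widetilde{g}^q_{\hom})^{\rm ls}(Z),
\]
which is the assertion. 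I do not expect a genuine obstacle here — this is the scalar argument of \cite[Lemma 12.4.1]{CDA02}, and the vectorial passage is transparent since no cut-off functions intervene (in contrast with Lemma \ref{lemma12.3.1}). The only point needing a word of care, because $g$ is unbounded, is that the competitor must remain in the effective domain, i.e.\ $G_{\e_h}(\Omega, u_h)<+\infty$ for all $h$; but this is automatic, since $v$ is chosen with $\int_Y g(y, Z+Dv)\,dy<+\infty$, so $g(y, Z+Dv(y))<+\infty$ for a.e.\ $y$ and the Riemann--Lebesgue limit above is finite.
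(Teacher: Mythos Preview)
Your argument is correct and is precisely the scalar proof of \cite[Lemma~12.4.1]{CDA02} carried over verbatim to the vectorial setting, which is exactly what the paper invokes (the paper omits the proof and refers to that reference). The recovery sequence $u_h=u_Z+\e_h v(\cdot/\e_h)$, the Riemann--Lebesgue step, and the final relaxation in $Z$ via the $L^\infty$-lower semicontinuity of the $\Gamma$-$\limsup$ are all as expected; no additional ingredient is needed here.
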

To prove a similar inequality with $G'$ in place of $G''$, we gather some properties whose proofs are skipped since they follow the same lines of \cite[Lemma 12.4.2, 12.4.3 and 12.4.4]{CDA02} respectively, besides now we are in the vectorial setting.
We just observe that as in \cite[Lemma 12.4.2]{CDA02}, the proof of (i) below is a consequence of the convexity of $g(x,\cdot)$ inherited by $G'$ and $G''$ (see \cite[Proposition 3.4,1.]{CDA02}) also in the vectorial case.
\color{black}

   \begin{lemma}
       \label{Lemma 12.4.2, 12.4.3, 12.4.4}
       Let $g: \R^n\times \R^{d \times n} \to [0, +\infty]$ satisfy {\rm (H1)} and {\rm (H2)}.  Let $\{G_{\varepsilon}\}_\varepsilon$ be the functionals  defined by \eqref{functCDA} and let $G'$ be as in \eqref{defliminf}. \color{black} 
       Then, 
          \begin{itemize}
              \item[{\rm (i)}] assuming also \eqref{H40}, 
                  \begin{equation}
                  \notag
                      G'(\Omega, tu)\leq t G'(\Omega, u) + (1-t) {\mathcal L}^n(\Omega) \int_Y g(y, 0)dy,
                  \end{equation}
                  for any $\Omega\in {\color{blue} {\cal T}_0}$, $u\in L^\infty_{\loc}(\R^n; \hspace{0.03cm} \R^d)$ and for any $t\in [0,1]$.  Similar inequalities hold for $G'', G'_{-}, G''_{-}$ in place of $G'$, with $G''$ defined in \eqref{deflimsup}, and the latter two functionals being the inner regular envelopes of the previous ones.
              \item[{\rm (ii)}]
                  \begin{equation}
                      \notag
                      {1\over r_1^n} G'(x_1+r_1 Y, u_Z)={1\over r_2^n} G'(x_2+r_2 Y, u_Z),
                  \end{equation}
                for any $x_1, x_2\in\R^n$, $r_1, r_2\in (0, +\infty)$, and $Z\in\R^{d\times n}$.
            \item[{\rm (iii)}]  
                 \begin{equation}
                     \notag
                     \widetilde{g}^q_\hom (Z) = \inf\left\{\int_Y g(hx, Z+Du)dx \hspace{0.03cm}: \hspace{0.03cm} v\in W^{1,q}_{\#}(Y; \hspace{0.03cm} \R^d)\cap L^\infty(Y; \hspace{0.03cm} \R^d)    \right\},
                 \end{equation}
                 for any $Z\in\R^{d\times n}$ and $h\in\mathbb{N}$, with $\widetilde{g}^q_{\hom}$ defined by \eqref{ftildeCDA} and $q\in[1,+\infty]$.
          \end{itemize}
   \end{lemma}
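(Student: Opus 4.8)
The plan is to establish the three items following \cite[Lemmas 12.4.2, 12.4.3, 12.4.4]{CDA02}, inserting the changes forced by the vectorial framework; throughout I use the reduction to $\e_h=1/h$ granted by Remark \ref{remark:existenceofcube}. Item {\rm (i)} is a direct consequence of the convexity {\rm (H2)} of $g(x,\cdot)$ and of the Riemann--Lebesgue lemma, exactly as in the scalar case. Given $u\in L^\infty_\loc(\R^n;\R^d)$, $t\in[0,1]$ and $\eta>0$, choose $u_h\to u$ in $L^\infty$ with $\liminf_h\int_\Omega g(hx,Du_h)\,dx\le G'(\Omega,u)+\eta$ and, up to a subsequence, assume this $\liminf$ is a limit. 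Since $t\,Du_h$ is the convex combination $t\,Du_h+(1-t)\cdot 0$, convexity gives
\[
\int_\Omega g(hx,t\,Du_h)\,dx\le t\int_\Omega g(hx,Du_h)\,dx+(1-t)\int_\Omega g(hx,0)\,dx,
\]
and by the Riemann--Lebesgue lemma together with \eqref{H40} the last term converges to $(1-t)\,\mathcal L^n(\Omega)\int_Y g(y,0)\,dy$. As $t\,u_h\to t\,u$ in $L^\infty$, the left-hand side bounds $G'(\Omega,t\,u)$ from above through its $\liminf$; letting $\eta\to0$ proves the assertion for $G'$. Replacing $\liminf$ by $\limsup$ and arguing with a recovery sequence for the $\Gamma$-$\limsup$ yields it for $G''$, and taking the supremum over $B\subset\subset\Omega$ yields it for $G'_-$ and $G''_-$; the values $t\in\{0,1\}$ are trivial.

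Item {\rm (iii)} is obtained by a change of variables combined with the equivalence, for convex integrands, of the minimization problems posed over $Y$ and over $hY$. For $v\in W^{1,q}_{\#}(Y;\R^d)\cap L^\infty(Y;\R^d)$ extended $Y$-periodically, set $W(s):=h\,v(s/h)$; then $W\in W^{1,q}_{\#}(hY;\R^d)\cap L^\infty(hY;\R^d)$, $DW(s)=(Dv)(s/h)$, and $v\mapsto W$ is a bijection between these two classes. The substitution $s=hx$ gives
\[
\int_Y g\bigl(hx,Z+Dv(x)\bigr)\,dx=\frac{1}{h^n}\int_{hY} g\bigl(s,Z+DW(s)\bigr)\,ds,
\]
so the infimum in the statement coincides with $\frac{1}{h^n}\inf\{\int_{hY}g(s,Z+DW)\,ds:\,W\in W^{1,q}_{\#}(hY;\R^d)\cap L^\infty(hY;\R^d)\}$. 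Testing with $Y$-periodic $W$ (for which $\frac{1}{h^n}\int_{hY}=\int_Y$) shows this infimum is $\le\widetilde{g}^q_{\hom}(Z)$. Conversely, for an arbitrary admissible $W$ the function $\overline W(y):=\frac{1}{h^n}\sum_{i\in\{0,\dots,h-1\}^n}W(y+i)$ is $Y$-periodic, belongs to $W^{1,q}_{\#}(Y;\R^d)\cap L^\infty(Y;\R^d)$, and, by Jensen's inequality and the $Y$-periodicity of $g$,
\[
\int_Y g\bigl(y,Z+D\overline W(y)\bigr)\,dy\le\frac{1}{h^n}\sum_{i}\int_Y g\bigl(y+i,Z+DW(y+i)\bigr)\,dy=\frac{1}{h^n}\int_{hY} g\bigl(s,Z+DW(s)\bigr)\,ds,
\]
so $\widetilde{g}^q_{\hom}(Z)$ is $\le$ that infimum as well. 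This is precisely the averaging already used for the cell formula \eqref{cellformula}.

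For item {\rm (ii)} I prove first the translation invariance of $\frac{1}{r^n}G'(x_0+rY,u_Z)$ in $x_0$, then its independence of $r$. Fix $r'<r$, $x_1,x_2\in\R^n$, and $u_h\to u_Z$ in $L^\infty(x_1+rY;\R^d)$ with $u_h\in W^{1,q}_\loc(\R^n;\R^d)\cap L^\infty_\loc(\R^n;\R^d)$ realizing $G'(x_1+rY,u_Z)$ as a limit. Write $x_2-x_1=a_h+b_h$ with $a_h\in\frac{1}{h}\Z^n$ and $|b_h|\le\sqrt n/h\to0$, and set $w_h(y):=u_h(y-a_h)+Z\,a_h$. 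For $h$ large, $x_2+r'Y-a_h=x_1+b_h+r'Y\subset x_1+rY$, so $w_h\to u_Z$ in $L^\infty(x_2+r'Y;\R^d)$, $Dw_h(y)=Du_h(y-a_h)$, and since $h\,a_h\in\Z^n$ the $Y$-periodicity of $g$ gives $g(hy,Dw_h(y))=g\bigl(h(y-a_h),Du_h(y-a_h)\bigr)$; hence, by $g\ge0$ and the above inclusion, $\int_{x_2+r'Y}g(hy,Dw_h)\,dy\le\int_{x_1+rY}g(hx,Du_h)\,dx$, whence $G'(x_2+r'Y,u_Z)\le G'(x_1+rY,u_Z)$. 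An inner-regularity/covering argument as in \cite[Lemma 12.4.3]{CDA02} upgrades this to translation invariance of $G'_-$. For the homogeneity in $r$, when $r_1=k\,r_2$, $k\in\N$, the cube $x_1+r_1Y$ is the essentially disjoint union of the $k^n$ cubes $x_1+j\,r_2+r_2Y$, $j\in\{0,\dots,k-1\}^n$; superadditivity of $G'_-$ (Proposition \ref{Proposition 12.2.1}{\rm (i)}) and the translation invariance give $\frac{1}{r_1^n}G'_-(x_1+r_1Y,u_Z)\ge\frac{1}{r_2^n}G'_-(x_2+r_2Y,u_Z)$, whereas the reverse inequality and the extension to arbitrary ratios $r_1/r_2$ follow, as in \cite[Lemma 12.4.3]{CDA02}, from the subadditivity of $G''_-$ (Proposition \ref{Proposition 12.2.1}{\rm (ii)}), the upper bound in Proposition \ref{Lemma 12.4.1}, and a density argument that along the way identifies $G'$ with $G''$ on cubes.

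The main obstacle is item {\rm (ii)}: in the scalar case \cite[Lemma 12.4.3]{CDA02} rests on the explicit cut-off functions of \cite[Lemma 11.1.1]{CDA02}, which have no vectorial counterpart. The lattice-shift construction above avoids them for the positional invariance, since $u_Z$ is affine and $g$ is $Y$-periodic once one reduces to $\e_h=1/h$, so no cutting off is performed there; the homogeneity in $r$ is then delegated entirely to the sub/superadditivity of Proposition \ref{Proposition 12.2.1}, whose proof is where assumptions {\rm (H3)}--{\rm (H4)} are needed in order to absorb the unboundedness of $g$ along the gluing interfaces.
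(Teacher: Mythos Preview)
Your arguments for items (i) and (iii) are correct and essentially coincide with the paper's approach, which defers to \cite[Lemmas 12.4.2 and 12.4.4]{CDA02}: the convexity inequality for (i) and the rescaling/averaging identity for (iii) are unaffected by the passage to vector-valued $u$.

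For item (ii) there is a gap. Your lattice-shift construction correctly yields $G'(x_2+r'Y,u_Z)\le G'(x_1+rY,u_Z)$ for every $r'<r$, and together with the trivial superadditivity of $G'$ this gives one half of the scaling identity for integer ratios. The problem is the closing step: you invoke ``a density argument that along the way identifies $G'$ with $G''$ on cubes'' (combined with Proposition \ref{Lemma 12.4.1} and subadditivity of $G''_-$). But the identification $G'=G''$ on linear data is precisely Proposition \ref{Prop 12.4.7}, which in the paper's logical order is obtained via Proposition \ref{Proposition 12.4.6} and Lemma \ref{Lemma 12.4.5}, both of which depend on item (ii). Invoking it here is circular. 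A secondary issue is that the subadditivity in Proposition \ref{Proposition 12.2.1}(ii) requires (H3)--(H4), whereas the present lemma is stated under (H1)--(H2) alone; this is harmless for the downstream applications but should be flagged.

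Note also that the paper's own stance differs from yours: it asserts that \cite[Lemma 12.4.3]{CDA02} extends verbatim to the vectorial case, implicitly claiming that this particular lemma does \emph{not} rest on the cut-off construction of \cite[Lemma 11.1.1]{CDA02} (the paper singles out that construction as an obstruction only for Proposition \ref{Proposition 12.2.1}). Your proposal mixes the two positions---declaring the scalar proof inapplicable, and then citing it (``as in \cite[Lemma 12.4.3]{CDA02}'') for the harder direction. You should either verify that the original proof of \cite[Lemma 12.4.3]{CDA02} is indeed independent of those cut-offs and follow it, or supply a self-contained argument for the reverse scaling inequality for $G'$ that does not presuppose $G'=G''$.
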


\color{black}
 The following lemma is the analogue of  \cite[Lemma 12.4.5]{CDA02} in the vectorial case. However, the proof requires some technical changes and for the readers' convenience, we present {\color{blue} it}.
 
\begin{lemma}
    \label{Lemma 12.4.5}
    Let $g: \R^n\times \R^{d \times n} \to [0, +\infty]$  satisfy {\rm (H1)}-
    {\rm (H4)}.
    Let $G'$ be the functional defined by \eqref{defliminf} such that  \begin{equation}
        \label{ass-F'meno12}
    G'((-1, 2)^n, u_Z)< + \infty,
    \end{equation}
    for some $Z\in\R^{d\times n}$. Then,
         \begin{equation}
         \notag
             \widetilde{g}^q_\hom(tZ)<+\infty \qquad\mbox{for all } t\in [0,1).
         \end{equation}
\end{lemma}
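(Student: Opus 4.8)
The plan is to adapt \cite[Lemma 12.4.5]{CDA02} to the vectorial setting, the essential change being that the explicit cut-off functions of the scalar proof must be replaced by the convexity-based gluing already used in Proposition \ref{Proposition 12.2.1} and Lemma \ref{lemma12.3.1}. I would first reduce the statement to the claim that $Z\in\overline{{\rm dom}\,\widetilde g^q_\hom}$. Indeed, ${\rm dom}\,\widetilde g^q_\hom$ is convex by Proposition \ref{prop:Proposition 12.1.3}(i) and, by \eqref{existdelta}, contains $B_{2\delta}(0)$ in its interior; since the segment joining an interior point of a convex set to a point of its closure lies, apart from that endpoint, in the interior, the inclusion $Z\in\overline{{\rm dom}\,\widetilde g^q_\hom}$ gives $tZ\in{\rm int}({\rm dom}\,\widetilde g^q_\hom)$ for every $t\in[0,1)$, hence $\widetilde g^q_\hom(tZ)<+\infty$, a convex function being finite on the interior of its effective domain.

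To prove $Z\in\overline{{\rm dom}\,\widetilde g^q_\hom}$ I would exploit \eqref{ass-F'meno12}: by Remark \ref{remark:existenceofcube} we may take $\e_h=1/h$, and the definition of the $\Gamma$-$\liminf$ then provides $u_h\to u_Z$ in $L^\infty((-1,2)^n;\R^d)$ with, along a subsequence, $\int_{(-1,2)^n}g(hx,Du_h)\,dx\le M<+\infty$; write $u_h=u_Z+r_h$, so $\|r_h\|_{L^\infty((-1,2)^n)}\to0$. Partitioning $(0,2)^n\subset(-1,2)^n$ into the $(2h)^n$ unit cells $\tfrac1h(k+Y)$, $k\in\{0,\dots,2h-1\}^n$, I would consider
\[
\Phi_h(y):=\frac{1}{(2h)^n}\sum_{k}\Big(h\,u_h\big(\tfrac1h(k+y)\big)-Zk\Big),\qquad y\in Y,
\]
and check, as in \cite{CDA02}, that: (a) the traces of $\Phi_h-u_Z$ on opposite faces of $Y$ differ by $o(1)$ (the non-cancelling terms in the corresponding difference come from the boundary of the index set and are $O(\|r_h\|_{L^\infty})$); (b) since $D\Phi_h$ is an average of values of $Du_h$, the sequence $\{\Phi_h\}$ is equi-Lipschitz — here the energy bound together with (H3)–(H4) is what forces $\|Du_h\|_{L^\infty}$ to be bounded uniformly; (c) by the convexity of $g(y,\cdot)$, the change of variables $x=\tfrac1h(k+y)$ and the $Y$-periodicity of $g$,
\[
\int_Y g(y,D\Phi_h(y))\,dy\ \le\ \frac{1}{(2h)^n}\sum_k\int_Y g\big(y,Du_h(\tfrac1h(k+y))\big)\,dy\ =\ \frac{h^n}{(2h)^n}\int_{(0,2)^n} g(hx,Du_h)\,dx\ \le\ \frac{M}{2^n}.
\]
By (b) a further subsequence of $\{\Phi_h-\Phi_h(0)\}$ converges uniformly on $Y$ to some $\Phi_\infty=u_Z+R_\infty$, and by (a) the limit satisfies $R_\infty\in W^{1,\infty}_\#(Y;\R^d)$.

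Then I would pass to the limit in (c): since $D\Phi_h\rightharpoonup^{*}Z+DR_\infty$ in $L^\infty(Y;\R^{d\times n})$, the lower semicontinuity of convex integral functionals (with integrand $g^{\rm ls}$, see e.g.\ \cite{Buttazzo}) yields $\int_Y g^{\rm ls}(y,Z+DR_\infty(y))\,dy\le M/2^n<+\infty$, so $R_\infty$ is admissible in \eqref{ftildeCDA} for $g^{\rm ls}$ and $Z\in{\rm dom}\,\widetilde{(g^{\rm ls})}^q_\hom$; since homogenization commutes with relaxation for convex integrands, the effective domains of $\widetilde{(g^{\rm ls})}^q_\hom$ and $\widetilde g^q_\hom$ have the same closure, so $Z\in\overline{{\rm dom}\,\widetilde g^q_\hom}$ and the reduction of the first step applies.

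The part I expect to be hardest is the second step — building the $\Phi_h$ and, especially, the equi-Lipschitz bound (b): in \cite[Lemma 12.4.5]{CDA02} this relies on a family of explicit cut-off functions that is unavailable for $d>1$, so one must instead control the gluing through the convexity of $g$ and (H3)–(H4), in the spirit of Proposition \ref{Proposition 12.2.1} and Lemma \ref{lemma12.3.1}; a minor additional point is the passage from $\widetilde{(g^{\rm ls})}^q_\hom$ back to $\widetilde g^q_\hom$, which uses again convexity and the fact — following from \eqref{H40}, \eqref{hpL} and $B_\delta(0)\subset Q$ — that $B_\delta(0)\subseteq{\rm int}\,{\rm dom}\,g(y,\cdot)$ for a.e.\ $y$.
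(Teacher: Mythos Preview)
Your reduction in the first paragraph is fine, and claims (a) and (c) about the averaged function $\Phi_h$ are correct. The gap is (b): the equi-Lipschitz bound on $\{\Phi_h\}$ (or any gradient compactness) cannot be extracted from {\rm (H1)}--{\rm (H4)}. Those hypotheses only say that ${\rm dom}\,g(x,\cdot)$ and ${\rm dom}\,\widetilde g^q_\hom$ are \emph{large} near the origin; they impose no coercivity whatsoever. For instance $g\equiv 0$, or $g(x,Z)=(|Z|-R)^+$, satisfy {\rm (H1)}--{\rm (H4)}, and in either case the energy bound $\int_{(-1,2)^n} g(hx,Du_h)\,dx\le M$ gives no control on $\|Du_h\|_{L^\infty}$ (nor on any $L^p$ norm of $Du_h$), hence none on $\|D\Phi_h\|_{L^\infty}$. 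Without compactness you cannot pass to the limit in (c), so you never reach a genuinely periodic competitor for $Z$, and the passage through $\widetilde{(g^{\rm ls})}^q_\hom$ never gets started.

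The paper sidesteps this entirely by \emph{not taking a limit}. It builds, for each fixed $t\in[0,1)$ and each sufficiently large $k$, an explicit competitor $t(u_{h_k}-u_Z)\in W^{1,q}_\#(Y;\R^d)\cap L^\infty(Y;\R^d)$ for the cell problem at level $tZ$, using a smooth $Y$-periodic partition of unity $\widetilde\psi$ (so $\sum_{i\in\Z^n}\widetilde\psi(\cdot+i)=1$) to periodize $v_{h_k}-u_Z$. Convexity of $g$ splits the cell integral into a ``main'' term $\mathcal I_1$, bounded by $\int_{(-1,2)^n}g(x/\e_{h_k},Dv_{h_k})\,dx<+\infty$ after using periodicity, and a ``cross'' term $\mathcal I_2$ involving $\tfrac{t}{1-t}(v_{h_k}-u_Z)\otimes\nabla\widetilde\psi$. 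The point of the factor $t<1$ is precisely here: since $\|v_{h_k}-u_Z\|_{L^\infty}\to0$, for $k$ large this matrix lies in $B_\delta(0)\subset Q$, so {\rm (H4)} lets one write it as a convex combination of the vertices $A_i$ and bound $\mathcal I_2$ by $\sum_i\int_Y g(y,A_i)\,dy<+\infty$. Then Lemma~\ref{Lemma 12.4.2, 12.4.3, 12.4.4}(iii) (which rewrites $\widetilde g^q_\hom$ with $g(h\cdot,\cdot)$) gives $\widetilde g^q_\hom(tZ)<+\infty$ directly. No gradient bound on $v_{h_k}$ is ever used---only the $L^\infty$ convergence of $v_{h_k}$ to $u_Z$, which comes for free from \eqref{ass-F'meno12}.
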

\begin{proof}
   In view of \eqref{ass-F'meno12}
   there exists a sequence $\{v_h\}_h\subseteq W^{1,q}_{\loc}(\R^n; \R^d)\cap L^\infty_{\loc}(\R^n; \R^d)$ and $\{h_k\}\subset\N$ strictly increasing such that $v_h$ converges to $u_Z$ in $L^\infty((-1, 2)^n; \R^d)$ and
           \begin{equation}
               \label{ref2}
               \int_{(-1,2)^n} g\left({x\over \e_{h_k}}, Dv_{h_k}\right)dx<+\infty\qquad\mbox{for any } k\in\N.
           \end{equation}
    For fixed $\eta\in(0,1)$, let $\psi$ be a  \color{blue} smooth \color{black} cut-off function such that $\psi\equiv 1$ in {\color{blue} $Y$}, $0\leq\psi\leq 1$ in $(-\eta, 1+\eta)^n$ and $\psi\equiv 0$ in $\R^n\setminus (-\eta, 1+\eta)^n$ and 
        \begin{equation}
            \label{propcutofffunction}
            \|\nabla \psi\|_{L^\infty(\R^n; \R^d)} \leq {C\over {\rm dist}(Y, \partial (-\eta, 1+\eta)^n)}.
        \end{equation}  
    Note that $\sum_{i\in\Z^n} \psi(x+i)$ is a sum over a finite set of indices and the following inequality
          \begin{equation}
              \notag
              \sum_{i\in\Z^n} \psi(x+i)\geq 1 \qquad \mbox{for a.e. } x\in\R^n,
          \end{equation}
    holds. Set 
       \begin{equation}
           \notag
           \widetilde{\psi}(x) \coloneqq {\psi(x)\over \sum_{j\in\Z^n}\psi(x+j)} \qquad\mbox{for a.e. } x\in\R^n.
       \end{equation}
   Note that $\widetilde{\psi}\equiv 0$ in $\R^n \setminus (-\eta, 1+\eta)^n$, $0\leq\widetilde{\psi}\leq 1$ in $\R^n$ and $\sum_{i\in\Z^n}\widetilde{\psi}(x+i)=1$. We define the sequence $\{u_h\}_h$ by
       \begin{equation}
           \notag
           u_h(x) \coloneqq u_Z(x) + \sum_{i\in\Z^n} (v_h(x+i)-u_Z(x+i)) \widetilde{\psi}(x+i),
       \end{equation}
   for a.e. $x\in\R^n$ and every $h\in\N$. We stress out that the above sum is extended only to a finite set of indices $i\in\Z^n$ and, as a by-product, $u_h\in W^{1, q}_{\loc}(\R^n; \R^d)\cap L^\infty_{\loc} (\R^n; \R^d)$. In particular, one can show that $u_h\in W^{1, q}_{\#}(Y; \R^d)\cap L^\infty_{\loc} (Y; \R^d)$. For fixed $t\in [0,1)$, we  prove that there exists $k_t\in \N$ such that
        \begin{equation}
            \label{eq1.4}
            \int_Y g\left({x\over \e_{h_{k}} }, tZ+tD(u_{h_{k}}(x) - u_Z(x))    \right)dx<+\infty,
        \end{equation}
    for any $k\geq k_t$.
    To that end, making use of the convexity properties of $g$ as well as of the fact that $\sum_{i\in\Z^n}\widetilde{\psi}(x+i)=1$, we deduce that  
       \begin{align}
           &\int_Y g\left({x\over \e_{h_{k}}}, tZ+tD(u_{h_{k}}-u_Z)\right)dx\notag\\
           &=\int_Y g\left({x\over \e_{h_{k}}}, tZ+tD\left(\sum_{i\in\mathbb{Z}^n}(v_{h_k}(x+i) -u_Z(x+i))\widetilde{\psi}(x+i)  \right)   \right)dx\notag\\
           & =\int_Y g\biggl({x\over \e_{h_{k}}}, tZ+t\sum_{i\in\mathbb{Z}^n}\widetilde{\psi}(x+i) D(v_{h_k}(x+i)-u_Z(x+i))\notag\\
           &\hspace{5cm} + t\sum_{i\in\mathbb{Z}^n}(v_{h_k}(x+i)-u_Z(x+i))\otimes \nabla\widetilde{\psi}(x+i)  \biggr)dx\notag\\
           &=\int_Y g\left({x\over \e_{h_{k}}}, t\sum_{i\in\mathbb{Z}^n} \left(\widetilde{\psi}_{h_k}(x+i)Dv_{h_k}(x+i)+ (v_{h_k}(x+i)-u_Z(x+i))\otimes \nabla\widetilde{\psi}(x+i)\right) \right)dx\notag\\
           &\leq t\underbrace{\int_Y g\left({x\over \e_{h_{k}}}, \sum_{i\in\mathbb{Z}^n} \widetilde{\psi}_{h_k}(x+i)Dv_{h_k}(x+i) \right) dx}_{\eqqcolon {\cal I}_1}\notag\\
           &\quad + (1-t) \underbrace{\int_Y g\left({x\over \e_{h_{k}}}, {t\over 1-t} \sum_{i\in\mathbb{Z}^n}(v_{h_k}(x+i)-u_Z(x+i))\otimes \nabla\widetilde{\psi}(x+i) \right)dx}_{\eqqcolon {\cal I}_2},\label{eq1.3}
       \end{align}
   for any $k\in\N$.
Now we estimate ${\cal I}_1$ and ${\cal I}_2$. To that end, set
    \begin{equation}
        \notag
        I\coloneqq \{i\in\Z^n\hspace{0.03cm}:\hspace{0.03cm} (Y+i)\cap (-\eta, 1+\eta)^n\neq \emptyset \}.
    \end{equation}
Note that $I$ has exactly $3^n$ elements and ${\mathcal L}^n((-1, 2)^n\setminus \bigcup_{i\in I}(Y+i))=0$. Moreover, we have that 
    \begin{equation}
        \notag
        \sum_{i\in I} \widetilde{\psi}_h (x+i)=1 \quad\mbox{for a.e. } x\in Y, \mbox{ and for any }h\in\mathbb{N}.
    \end{equation}
This implies the finiteness ${\cal I}_1$. Indeed, by $\rm (H2)$
    \begin{align}
        {\cal I}_1&=\int_Y g\left({x\over \e_{h_k}}, \sum_{i\in I} \widetilde{\psi}(x+i)Dv_{h_k}(x+i) \right) dx\notag\\
        &\leq \sum_{i\in I} \int_Y \widetilde{\psi}(x+i)g\left({x\over \e_{h_k}}, Dv_{h_k}(x+i) \right) dx\notag\\
        &\leq  \sum_{i\in I} \int_Y g\left({x\over \e_{h_k}}, Dv_{h_k}(x+i) \right) dx\notag\\
        &=  \sum_{i\in I} \int_{Y+i} g\left({y-i\over \e_{h_k}}, Dv_{h_k}(y) \right) dy\notag\\
        &= \sum_{i\in I} \int_{Y+i} g\left({y\over \e_{h_k}}, Dv_{h_k}(y) \right) dy\notag\\
        &=\int_{(-1,2)^n} g\left({y\over \e_{h_k}}, Dv_{h_k}(y) \right) dy<+\infty,\label{eq1.1}
    \end{align}
where in the last line we made use of \eqref{ref2}.
  
Now, we estimate ${\cal I}_2$.  First, note that $\nabla \widetilde{\psi} (x+i) = 2\lambda^i(x)\nabla\psi(x+i) - 2 \mu^i(x)\sum_{j\in\Z^n}\nabla \psi(x+i)$, where $\lambda^i$ and $\mu^i$ are defined respectively by 
    \begin{equation}
        \notag
        \lambda^i(x)\coloneqq {1\over 2\sum_{j\in\Z^n} \psi(x+i+j)}, \qquad \mu^i(x)\coloneqq {\psi(x+i)\over 2\left(\sum_{j\in\Z^n} \psi(x+i+j)\right)^2}.
    \end{equation} 
It \color{blue} results \color{black} that $0\leq\lambda^i(x)\leq {1\over 2}$ and $0\leq\mu^i(x)\leq {1\over 2}$ for a.e. $x\in Y$, for any $i\in I$.

Therefore, the convexity of $g$ leads us to
    \begin{align}
        {\cal I}_2&\leq \sum_{i\in I}{1\over 3^n}\int_Y g\left({x\over\e_{h_k}}, 3^n{ t\over 1-t} (v_{h_k}(x+i)-u_Z(x+i))\otimes \nabla\widetilde{\psi}(x+i) \right)dx\notag\\
        &\leq \sum_{i\in I} \int_Y  g\left({x\over\e_{h_k}}, 3^n{t\over 1-t} (v_{h_k}(x+i)-u_Z(x+i))\otimes2 \nabla \psi(x+i)\right) dx\notag\\
        &\quad + \sum_{i\in I} \int_Y  g\left({x\over\e_{h_k}}, -3^n{ t\over 1-t} 2(v_{h_k}(x+i)-u_Z(x+i))\otimes \sum_{j\in\Z^n}\nabla\psi(x+i+j)\right) dx\notag\\
        &\quad + \sum_{i\in I} \int_Y g\left({x\over\e_{h_k}}, 0\right)dx.\label{eq1.0}
    \end{align}

Now, we consider the first integral in the right-hand side of \eqref{eq1.0}. The periodicity of $g$ yields to 
    \begin{align}
        \int_Y & g\left({x\over\e_{h_k}}, 3^n {t\over 1-t} (v_{h_k}(x+i)-u_Z(x+i))\otimes2 \nabla \psi(x+i)\right) dx\notag\\
        & =\int_{Y+i}g\left({y\over\e_{h_k}}, 3^n{ t\over 1-t} (v_{h_k}(y)-u_Z(y))\otimes2 \nabla \psi(y)\right) dy.\notag
    \end{align}
Since $Y+i\subset (-1, 2)^n$ along with the fact that $v_h\to u_z$ in $L^\infty((-1, 2)^n; \hspace{0.03cm}\R^d)$  as $h\to+\infty$ and estimate \eqref{propcutofffunction}, for any $t\in(0,1)$ there exists $k_{t}\in\N$ such that 
     \begin{equation}
         \notag
         3^n2 {t\over 1-t}(v_{h_k}-u_Z)\otimes \nabla \psi\in B_\delta (0) \qquad\mbox{for } k\geq k_{ t}.
     \end{equation}
By \eqref{hpL}, we may write $3^n2 {t\over 1-t}(v_{h_k}-u_Z)\otimes\nabla \psi$ as a convex combination of the vertices $\{A_i\}_{i=1}^{dn}$ of the cube $Q$, i.e., there exist, for every $h_k \in \mathbb N$, functions $s^{h_k}_1,\dots, s^{h_k}_{nd}: \Omega \to (0,1)$ such that
      \begin{equation}
          \notag
           3^n2 {t\over 1-t} (v_{h_k}-u_Z)(x)\otimes\nabla \psi(x) = \sum_{l=1}^{nd} s^{h_k}_l(x) A_l. 
      \end{equation}

Thus, in view of the periodicity of $g$ in the first variable and \eqref{hpL},
     \begin{align}
         \int_{Y+i}g\left({y\over\e_{h_k}}, 3^n{ t\over 1-t} (v_{h_k}(y)-u_z(y))\otimes2\nabla \psi(y)\right) dy &\leq \sum_{l=1}^{nd} \int_{Y+i} g\left({y\over\e_{h_k}}, A_l\right)dy<+\infty,\notag 
     \end{align}
     
\noindent for any $k\geq k_t$, concluding that the first integral in the right-hand side of \eqref{eq1.0} is finite. \\
Using similar arguments (see \cite[proof of Lemma 12.4.5]{CDA02}), and exploit the periodicity and convexity of $g$ in {\rm (H2)}, together with the convergence of $v_h$ towards $u_Z$ and \eqref{hpL}, it is possible to show that also the second integral in the right-hand side of \eqref{eq1.0} is finite, so that from \eqref{eq1.0}, it follows that
     \begin{align}
         {\cal I}_2&\leq \sum_{i\in I} \left[2\sum_{l=1}^{nd}\int_{Y-i} g\left({x\over \e_{h_k}}, A_l \right)dy + \int_Y g\left({x\over \e_{h_k}}, 0 \right)dy\right]<+\infty,\label{eq1.2}
     \end{align}
for any $k\geq k_t$. Gathering \eqref{eq1.1} and \eqref{eq1.2}, from \eqref{eq1.3}, we obtain \eqref{eq1.4}.\\
Using Lemma \ref{Lemma 12.4.2, 12.4.3, 12.4.4} (iii) combined with the fact that $u_{h_k}-u_Z\in W^{1,q}_{\#}(Y; \R^d)\cap L^{\infty}(Y; \R^d)$ and \eqref{eq1.4}, we conclude that 
    \begin{align}
        \widetilde{g}^q_\hom(tZ)&=\inf\left\{\int_Y g\left({x\over \e_{h}}, tZ+Du\right)dx \hspace{0.03cm}: \hspace{0.03cm} v\in W^{1,q}_{\#}(Y; \hspace{0.03cm} \R^d)\cap L^\infty(Y; \hspace{0.03cm} \R^d)    \right\}\notag\\
        &\leq \int_{Y} g\left({x\over \e_{h_k}}, tZ+tD(u_{h_k}(x)-u_Z(x))\right)dx<+\infty,\notag
    \end{align}
as desired.
\end{proof}

The next proposition shows an inequality for $G'$ similar to the one proved in Proposition \ref{Lemma 12.4.1}.

\begin{prop}
\label{Proposition 12.4.6}
    Let $g: \R^n\times \R^{d \times n} \to [0, +\infty]$  satisfy {\rm (H1)}-{\rm (H4)}. 
     Let $\widetilde{g}^q_\hom$ be defined by \eqref{ftildeCDA} and $q\in[1, +\infty]$. Then,
        \begin{equation}
        \notag
          \mathcal L^n(\Omega)   (\widetilde{g}^q_\hom)^{\rm ls} (Z) \leq G'(\Omega, u_Z),
        \end{equation}
    for any open and bounded subset $\Omega\subseteq \mathbb R^n$ and $Z\in\R^{d\times n}$.
\end{prop}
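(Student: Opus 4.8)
The plan is to combine the scaling invariance of $G'$ (Lemma~\ref{Lemma 12.4.2, 12.4.3, 12.4.4}(ii)) with a periodization that turns a recovery sequence for $G'$ on a cube into an admissible competitor in the cell representation of $\widetilde{g}^q_\hom$ given by Lemma~\ref{Lemma 12.4.2, 12.4.3, 12.4.4}(iii). We may assume $G'(\Omega,u_Z)<+\infty$, otherwise there is nothing to prove, and by Remark~\ref{remark:existenceofcube} it is not restrictive to take $\e_h=1/h$. Since $\Omega$ is open it contains a cube $x_0+rY$, so monotonicity of $G'$ and Lemma~\ref{Lemma 12.4.2, 12.4.3, 12.4.4}(ii) give $G'(Y,u_Z)=\tfrac1{r^n}G'(x_0+rY,u_Z)\le\tfrac1{r^n}G'(\Omega,u_Z)<+\infty$, hence also $G'((-1,2)^n,u_Z)=3^n\,G'(Y,u_Z)<+\infty$, and Lemma~\ref{Lemma 12.4.5} yields $\widetilde{g}^q_\hom(tZ)<+\infty$ for every $t\in[0,1)$. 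The core of the proof is the inequality
\begin{equation}\label{keylowerbound}
(\widetilde{g}^q_\hom)^{\rm ls}(Z)\le G'(Y,u_Z).
\end{equation}
Granted \eqref{keylowerbound}, Vitali's covering theorem provides pairwise disjoint open cubes $\{Q_i\}$ with $\bigcup_i Q_i\subseteq\Omega$ and $\sum_i\mathcal L^n(Q_i)=\mathcal L^n(\Omega)$; using the monotonicity of $G'$, its super‑additivity on disjoint open sets (immediate from $\liminf_h(a_h+b_h)\ge\liminf_h a_h+\liminf_h b_h$ and the additivity of $G_{\e_h}(\cdot,u)$ on disjoint open sets), and Lemma~\ref{Lemma 12.4.2, 12.4.3, 12.4.4}(ii) once more, one gets $G'(\Omega,u_Z)\ge\sum_i G'(Q_i,u_Z)=\sum_i\mathcal L^n(Q_i)\,G'(Y,u_Z)\ge\mathcal L^n(\Omega)\,(\widetilde{g}^q_\hom)^{\rm ls}(Z)$, which is the assertion.

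To prove \eqref{keylowerbound} fix $t\in[0,1)$ and $\eta\in(0,1)$, and let $\{v_h\}_h\subset W^{1,q}_\loc(\R^n;\R^d)\cap L^\infty_\loc(\R^n;\R^d)$ be a recovery sequence for $G'((-\eta,1+\eta)^n,u_Z)$, that is $v_h\to u_Z$ in $L^\infty((-\eta,1+\eta)^n;\R^d)$ and, after passing to a subsequence realizing the $\liminf$, $\int_{(-\eta,1+\eta)^n}g(x/\e_h,Dv_h)\,dx\to G'((-\eta,1+\eta)^n,u_Z)$. As in the proof of Lemma~\ref{Lemma 12.4.5} choose a smooth cut‑off $\psi$ with $\psi\equiv1$ on $\overline Y$, $0\le\psi\le1$, $\psi\equiv0$ outside $(-\eta,1+\eta)^n$ and $\|\nabla\psi\|_{L^\infty}\le C/\eta$, set $\widetilde\psi(x):=\psi(x)/\sum_{j\in\Z^n}\psi(x+j)$ (so $\sum_{i\in\Z^n}\widetilde\psi(x+i)\equiv1$ and $\widetilde\psi\equiv0$ outside $(-\eta,1+\eta)^n$), and define
\[
u_h(x):=u_Z(x)+\sum_{i\in\Z^n}\big(v_h(x+i)-u_Z(x+i)\big)\,\widetilde\psi(x+i),
\]
a locally finite sum. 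One checks as in Lemma~\ref{Lemma 12.4.5} that $u_h-u_Z\in W^{1,q}_{\#}(Y;\R^d)\cap L^\infty(Y;\R^d)$, and using $\sum_i\widetilde\psi(x+i)\equiv1$,
\[
tZ+tD(u_h-u_Z)(x)=t\sum_{i}\widetilde\psi(x+i)Dv_h(x+i)+t\sum_{i}\big(v_h(x+i)-u_Z(x+i)\big)\otimes\nabla\widetilde\psi(x+i).
\]
Since $\e_h=1/h$ we have $g(hx,\cdot)=g(x/\e_h,\cdot)$, so Lemma~\ref{Lemma 12.4.2, 12.4.3, 12.4.4}(iii) (with the test function $t(u_h-u_Z)$ and exponent $h\in\N$) together with the convexity of $g(hx,\cdot)$ gives
\begin{align*}
\widetilde{g}^q_\hom(tZ)&\le\int_Y g\big(hx,\,tZ+tD(u_h-u_Z)(x)\big)\,dx\\
&\le t\int_Y g\Big(hx,\sum_{i}\widetilde\psi(x+i)Dv_h(x+i)\Big)\,dx\\
&\quad+(1-t)\int_Y g\Big(hx,\tfrac{t}{1-t}\sum_{i}\big(v_h(x+i)-u_Z(x+i)\big)\otimes\nabla\widetilde\psi(x+i)\Big)\,dx.
\end{align*}

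For the first term, Jensen's inequality (with weights $\widetilde\psi(x+i)$), the change of variables $y=x+i$, the $\Z^n$‑periodicity of $g$, and the fact that the cubes $Y+i$ tile $\R^n$ give $\int_Y g(hx,\sum_i\widetilde\psi(x+i)Dv_h(x+i))\,dx\le\int_{\R^n}\widetilde\psi(y)\,g(hy,Dv_h(y))\,dy\le\int_{(-\eta,1+\eta)^n}g(y/\e_h,Dv_h(y))\,dy$, which tends to $G'((-\eta,1+\eta)^n,u_Z)=(1+2\eta)^n\,G'(Y,u_Z)$; note in particular that no spurious multiplicative constant appears, since after the change of variables the translates are summed over a tiling. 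For the second term, $v_h-u_Z\to0$ uniformly on $(-\eta,1+\eta)^n$ while $\widetilde\psi$ is fixed, so the matrix field $\tfrac{t}{1-t}\sum_i(v_h(\cdot+i)-u_Z(\cdot+i))\otimes\nabla\widetilde\psi(\cdot+i)$ tends to $0$ in $L^\infty$ as $h\to+\infty$; hence for $h$ large it lies in $B_\delta(0)$ and, by \eqref{existdelta}--\eqref{hpL}, is a convex combination $\sum_{l=1}^{nd}s^h_l(x)A_l$ of the vertices of $Q$, so convexity of $g(hx,\cdot)$, $s^h_l\le1$ and $\int_Y g(hx,A_l)\,dx=\int_Y g(y,A_l)\,dy$ bound it by $C_0:=\sum_{l=1}^{nd}\int_Y g(y,A_l)\,dy<+\infty$, a constant independent of $h,\eta,t$. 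Letting $h\to+\infty$, then $\eta\to0^+$, we obtain $\widetilde{g}^q_\hom(tZ)\le t\,G'(Y,u_Z)+(1-t)\,C_0$; since $tZ\to Z$ as $t\to1^-$ and $\widetilde{g}^q_\hom(tZ)<+\infty$, this gives $(\widetilde{g}^q_\hom)^{\rm ls}(Z)\le\liminf_{t\to1^-}\widetilde{g}^q_\hom(tZ)\le G'(Y,u_Z)$, i.e. \eqref{keylowerbound}.

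The main obstacle is the periodization step: one has to upgrade the recovery sequence $\{v_h\}$, which is neither periodic nor gradient‑bounded in $L^\infty$, to the admissible periodic competitor $u_h-u_Z$ without inflating the energy. This is exactly where the technical hypotheses (H3)--(H4) are needed — the extra gradient terms $(v_h-u_Z)\otimes\nabla\widetilde\psi$ created by the cut‑off are uniformly small, hence lie in $B_\delta(0)\subset Q$, and convexity together with $\int_Y g(\cdot,A_l)<+\infty$ absorbs them into the $h,\eta,t$‑independent constant $C_0$; one must also check, as indicated above, that the overlapping translates in $u_h$ do not introduce a multiplicative constant in the leading‑order term, which they do not after the tiling change of variables.
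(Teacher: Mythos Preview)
Your proposal is correct and follows essentially the same route that the paper indicates (the paper skips the proof and refers to \cite[Proposition~12.4.6]{CDA02}, pointing out that one first treats $\Omega=Y$ via Lemma~\ref{Lemma 12.4.2, 12.4.3, 12.4.4}(ii)--(iii) and Lemma~\ref{Lemma 12.4.5}); you have simply fleshed out those details, with the minor cosmetic variation of working on $(-\eta,1+\eta)^n$ and letting $\eta\to0$ rather than on $(-1,2)^n$.
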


We skip the proof of Proposition \ref{Proposition 12.4.6}, since it follows from Lemma \ref{Lemma 12.4.2, 12.4.3, 12.4.4} and \ref{Lemma 12.4.5}, arguing as in \cite[Proposition 12.4.6]{CDA02}, recalling that \eqref{H40},  
crucial in Lemma \ref{Lemma 12.4.2, 12.4.3, 12.4.4}, is a consequence of \eqref{hpL} in $(H4)$, see Remark \ref{remark:existenceofcube}. \color{black} We only would like to point out that the set appearing in \eqref{ass-F'meno12} may appear a bit weird; on the other hand it serves as an intermediate step to prove Proposition \ref{Proposition 12.4.6} in the particular case first $\Omega = Y;$, in this case, \eqref{ass-F'meno12} turns to be a consequence of Lemma \ref{Lemma 12.4.2, 12.4.3, 12.4.4} item (ii).
\\
\smallskip

Finally, we observe that the values of $G'(\Omega, \cdot)$ and $G''(\Omega, \cdot)$ coincide on the space of linear functions, as stated in the next result. The proof is omitted, being entirely similar to the one of \cite[Proposition 12.4.7]{CDA02} and relying on Propositions \ref{Proposition 12.4.6} and \ref{Lemma 12.4.1}, taking into account the properties of inner regular envelopes of measures (see \cite[Chapter 15]{DM93}).

\begin{prop}
\label{Prop 12.4.7}
    Under the same assumptions of Proposition \ref{Proposition 12.4.6}, it holds that
       \begin{equation}
           \notag
           G'(\Omega, u_Z) = G''(\Omega, u_Z) = G'_{-}(\Omega, u_Z) = G''_{-}(\Omega, u_Z)= {\mathcal L}^n(\Omega) (\widetilde{g}^q_\hom)^{\rm ls} (Z),
       \end{equation}
    for any $\Omega\in{\color{blue} {\cal T}_0}$ and $Z\in\R^{d\times n}$.
\end{prop}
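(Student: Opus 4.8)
The plan is to read off the common value on linear functions by sandwiching it between the two one‑sided bounds already at our disposal, and then to transfer the identity to the inner regular envelopes by a direct computation based on the inner regularity of the Lebesgue measure; nothing beyond Propositions~\ref{Lemma 12.4.1} and~\ref{Proposition 12.4.6} is needed. Concretely, I would fix $\Omega\in{\cal T}_0$ and $Z\in\R^{d\times n}$. Proposition~\ref{Proposition 12.4.6} gives ${\cal L}^n(\Omega)(\widetilde{g}^q_\hom)^{\rm ls}(Z)\le G'(\Omega,u_Z)$, Proposition~\ref{Lemma 12.4.1} gives $G''(\Omega,u_Z)\le {\cal L}^n(\Omega)(\widetilde{g}^q_\hom)^{\rm ls}(Z)$, and the general inequality $G'(\Omega,u_Z)\le G''(\Omega,u_Z)$ (the $\Gamma$-$\liminf$ never exceeds the $\Gamma$-$\limsup$, since taking the infimum over sequences preserves the pointwise inequality $\liminf\le\limsup$) closes the loop, whence
\[
G'(\Omega,u_Z)=G''(\Omega,u_Z)={\cal L}^n(\Omega)(\widetilde{g}^q_\hom)^{\rm ls}(Z).
\]
This chain holds whether $(\widetilde{g}^q_\hom)^{\rm ls}(Z)$ is finite or $+\infty$ (in the latter case all the quantities are $+\infty$ whenever ${\cal L}^n(\Omega)>0$, the statement being trivial for $\Omega=\emptyset$).

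Next I would pass to the inner regular envelopes. Since $g\ge 0$, the functionals $G_{\e_h}(\cdot,u_Z)$ are increasing set functions, hence so are their $\Gamma$-$\liminf$ $G'$ and $\Gamma$-$\limsup$ $G''$, and the envelopes $G'_-(\Omega,u_Z)=\sup\{G'(B,u_Z):B\in{\cal T}(\Omega),\ B\subset\subset\Omega\}$ and $G''_-(\Omega,u_Z)=\sup\{G''(B,u_Z):B\in{\cal T}(\Omega),\ B\subset\subset\Omega\}$ are well defined and dominated by $G'$, $G''$ respectively. Applying the first step with $B$ in place of $\Omega$ gives $G'(B,u_Z)=G''(B,u_Z)={\cal L}^n(B)(\widetilde{g}^q_\hom)^{\rm ls}(Z)$ for every such $B$, so that
\[
G'_-(\Omega,u_Z)=G''_-(\Omega,u_Z)=(\widetilde{g}^q_\hom)^{\rm ls}(Z)\,\sup\{{\cal L}^n(B):B\in{\cal T}(\Omega),\ B\subset\subset\Omega\}={\cal L}^n(\Omega)(\widetilde{g}^q_\hom)^{\rm ls}(Z),
\]
the last equality being the inner regularity of ${\cal L}^n$ on open sets (and if $(\widetilde{g}^q_\hom)^{\rm ls}(Z)=+\infty$ one just picks a single $B\subset\subset\Omega$ with ${\cal L}^n(B)>0$ to see the right‑hand side is $+\infty$). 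Combining the two displays yields all the equalities in the statement.

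The argument is essentially organizational — the content resides entirely in Propositions~\ref{Lemma 12.4.1} and~\ref{Proposition 12.4.6} — so there is no genuine obstacle. The only point that deserves a moment's care is the handling of the inner regular envelopes, where one invokes the standard properties of inner regular set functions (cf.~\cite[Chapter~15]{DM93}) together with the inner regularity of the Lebesgue measure; both are routine and require no extra hypothesis on $g$ beyond those already assumed.
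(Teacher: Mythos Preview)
Your proposal is correct and matches the paper's approach exactly: the paper omits the proof, stating only that it follows from Propositions~\ref{Lemma 12.4.1} and~\ref{Proposition 12.4.6} together with the properties of inner regular envelopes (\cite[Chapter~15]{DM93}), which is precisely the sandwich-plus-inner-regularity argument you wrote out.
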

The next proposition states that the inner regular envelope of the functional $G'$ given by \eqref{defliminf} satisfies a blow-up condition. It generalizes \cite[Proposition 12.5.2]{CDA02} to the vectorial setting and the proof, following the same lines of \cite[Proposition 12.5.2]{CDA02}, is omitted.

  \begin{prop}
      \label{Prop 12.5.2}
       Let $g: \R^n\times \R^{d \times n} \to [0, +\infty]$  satisfy {\rm (H1)} and {\rm (H2)},  and let $q\in [1,+\infty]$. Let $G'_{-}$ be the inner regular envelope of the functional defined by \eqref{defliminf}. Then, 
           \begin{equation}
               \notag
               \limsup_{r\to 0} {1\over r^n} G'_{-}(Q_r(x_0), u) \geq 
                   G'_{-} (Q_1(x_0), u(x_0)+ Du(x_0)\cdot (\cdot-x_0)),
           \end{equation}
        for a.e. $x_0\in\R^n$ and for any $u\in W^{1,\infty}_\loc(\R^n; \R^d)$.
  \end{prop}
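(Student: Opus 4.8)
The plan is to adapt the blow-up argument of \cite[Proposition 12.5.2]{CDA02}. Since $u\in W^{1,\infty}_\loc(\R^n;\R^d)$ is locally Lipschitz, it is classically differentiable at ${\cal L}^n$-a.e.\ $x_0\in\R^n$ (Rademacher); I would fix such a point, write $Z:=Du(x_0)$ and $\ell_{x_0}(x):=u(x_0)+Z(x-x_0)$, and assume, as we may, that $\limsup_{r\to 0}r^{-n}G'_-(Q_r(x_0),u)<+\infty$. First I would rewrite the right-hand side: by Proposition \ref{Prop 12.4.7} and the invariance of $G'_-$ under the addition of constants, $G'_-(Q_1(x_0),\ell_{x_0})={\cal L}^n(Q_1(x_0))(\widetilde{g}^q_{\hom})^{\rm ls}(Z)$, and, since $G'(\Omega,\cdot)$ is an increasing set function, inner regularity yields $G'_-(Q_1(x_0),\ell_{x_0})=\sup_{\rho\in(0,1)}G'(Q_\rho(x_0),\ell_{x_0})$. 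Hence it suffices to prove, for every $\rho\in(0,1)$, that
\begin{equation}\notag
\limsup_{r\to 0}\frac{1}{r^n}\,G'_-(Q_r(x_0),u)\ \ge\ G'(Q_\rho(x_0),\ell_{x_0}).
\end{equation}

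To this end, for each small $r>0$ I would observe $Q_{\rho r}(x_0)\subset\subset Q_r(x_0)$, pick $\{v^r_h\}_h\subset W^{1,q}_\loc(\R^n;\R^d)\cap L^\infty_\loc(\R^n;\R^d)$ with $v^r_h\to u$ in $L^\infty(Q_{\rho r}(x_0);\R^d)$ and $\liminf_h G_{\e_h}(Q_{\rho r}(x_0),v^r_h)=G'(Q_{\rho r}(x_0),u)\ (\le G'_-(Q_r(x_0),u))$, and blow it up by setting $w^r_h(y):=u(x_0)+\tfrac1r\big(v^r_h(x_0+r(y-x_0))-u(x_0)\big)$. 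The change of variables $x=x_0+r(y-x_0)$ together with the $Y$-periodicity of $g$ turns the rescaled functional into a homogenization functional on $Q_\rho(x_0)$ with effective parameter $\e_h/r$,
\begin{equation}\notag
\frac{1}{r^n}\,G_{\e_h}(Q_{\rho r}(x_0),v^r_h)=\int_{Q_\rho(x_0)}g\Big(\tau^r_h+\frac{r}{\e_h}\,y,\,Dw^r_h(y)\Big)\,dy,
\end{equation}
where $\tau^r_h\in Y$ is the componentwise fractional part of $x_0(1-r)/\e_h$, an inessential shift. Since $\e_h/r\to0$ and $v^r_h\to u$ in $L^\infty$ for each fixed $r$, a diagonalization over $h$ produces, for every $r$, an index $h(r)$ for which $\mu_r:=\e_{h(r)}/r\to0$, $\tau_r:=\tau^r_{h(r)}\in Y$, $W_r:=w^r_{h(r)}\to\ell_{x_0}$ in $L^\infty(Q_\rho(x_0);\R^d)$ --- here one uses only $\|u-\ell_{x_0}\|_{L^\infty(Q_r(x_0);\R^d)}=o(r)$, i.e.\ the differentiability of $u$ at $x_0$, so that no gradient bound on the recovery sequence (unavailable, as no coercivity is assumed) is needed --- and $r^{-n}G'_-(Q_r(x_0),u)+r\ge\int_{Q_\rho(x_0)}g(\tau_r+\mu_r^{-1}\,y,\,DW_r(y))\,dy$.

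Finally I would let $r\to0$. By the lower semicontinuity of the $\Gamma$-liminf under $L^\infty$-convergence (\cite[Proposition 6.8]{DM93}), the last integral is bounded below by the $\Gamma(L^\infty)$-$\liminf$ along $\{r\}$, at $\ell_{x_0}$, of the shifted rescaled family $v\mapsto\int_{Q_\rho(x_0)}g(\tau_r+\mu_r^{-1}\,\cdot\,,Dv)\,dx$; since the cell density \eqref{ftildeCDA} is, by the $Y$-periodicity of $g$, insensitive to the translations $\tau_r$, while under {\rm (H1)}-{\rm (H4)} the inner regular $\Gamma$-liminf is translation invariant and independent of the chosen infinitesimal sequence --- checked as in \cite[Lemma 12.1.2]{CDA02} (cf.\ Remark \ref{remark:existenceofcube}), which is also what legitimizes the iterated passage $h\to\infty$, then $r\to0$ --- that $\liminf$ is at least $G'_-(Q_\rho(x_0),\ell_{x_0})$, which by Propositions \ref{Proposition 12.4.6} and \ref{Prop 12.4.7} equals $G'(Q_\rho(x_0),\ell_{x_0})$. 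This yields the displayed inequality (in fact with $\liminf$ in place of $\limsup$), and taking the supremum over $\rho\in(0,1)$ concludes. The step I expect to be the real obstacle is exactly this coupling of the two length scales $r$ and $\e_h$: it is handled by slaving $\e_h$ to $r$ and absorbing the resulting translations through the periodicity of $g$ and the a priori translation invariance and scale-independence of $G'_-$, as in the scalar treatment of \cite[Chapter 12]{CDA02} --- which is why only this outline is given.
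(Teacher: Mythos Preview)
Your outline follows the same blow-up strategy that the paper attributes to \cite[Proposition 12.5.2]{CDA02}; since the paper omits its own proof, the approaches coincide at the level of method. There is, however, a hypothesis mismatch that you should address.

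The statement assumes only {\rm (H1)}--{\rm (H2)}, yet your argument appeals in two places to results that need {\rm (H3)}--{\rm (H4)}. The first is harmless: you invoke Propositions \ref{Proposition 12.4.6} and \ref{Prop 12.4.7} to identify $G'_{-}(Q_1(x_0),\ell_{x_0})$ with ${\cal L}^n(Q_1)(\widetilde g^q_{\hom})^{\rm ls}(Z)$ and to pass between $G'_{-}$ and $G'$ on linear functions. None of this is needed. If you simply show $\limsup_{r\to 0}r^{-n}G'_{-}(Q_r(x_0),u)\ge G'_{-}(Q_\rho(x_0),\ell_{x_0})$ for every $\rho<1$, then the very definition of the inner regular envelope yields $\sup_{\rho<1}G'_{-}(Q_\rho(x_0),\ell_{x_0})=G'_{-}(Q_1(x_0),\ell_{x_0})$, so you can delete those references.

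The second use is the substantive one. After the change of variables your effective scale is $\mu_r=\e_{h(r)}/r$, which is not a subsequence of $\{\e_h\}$, and you compare the resulting $\Gamma$-liminf with $G'_{-}$ by invoking the sequence-independence argument of \cite[Lemma 12.1.2]{CDA02} (cf.\ Remark \ref{remark:existenceofcube}); as you yourself note, that step uses {\rm (H1)}--{\rm (H4)}. To stay within {\rm (H1)}--{\rm (H2)} you should instead route the comparison through the tools the paper records under those hypotheses alone: the translation invariance of $G'$ (the vectorial analogue of \cite[Proposition 12.1.1]{CDA02}, used for {\rm (T2)} in the proof of Theorem \ref{Proposition 12.6.1}) absorbs the shift $\tau_r$, and the scaling identity of Lemma \ref{Lemma 12.4.2, 12.4.3, 12.4.4}\,{\rm (ii)} on linear functions, valid under {\rm (H1)}--{\rm (H2)}, lets you compare different scales on $u_Z$ without appealing to sequence-independence. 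Concretely, one organizes the diagonalization so that the rescaled parameter is a \emph{subsequence} of the original one (for which the $\Gamma$-liminf can only increase, a general fact requiring no extra assumptions) and then uses the scaling identity on the affine limit. As written, your outline proves the proposition only under {\rm (H1)}--{\rm (H4)}; rewire this step and it matches the stated hypotheses.
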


\subsection{Representation on Sobolev spaces}
The proof of Theorem \ref{Proposition 12.6.1} strongly relies on an integral representation result for unbounded functionals on Sobolev spaces. The aim of this subsection is to recall an abstract result which provides sufficient conditions to represent an unbounded functional $G$ depending on a bounded open subset $\Omega$ of  $\mathbb R^n$ and a function $u\in W^{1,p}_{\rm loc}(\R^n; \R^d)$, for $p\in[1,+\infty]$ as an integral. We state a generalization of \cite[Theorem 9.3.8]{CDA02} in the vectorial setting. Since its proof follows the same arguments, we skip it.
\par We start by providing the definition of weakly super-additivity and weakly sub-additivity (see \cite[Definition 2.6.5]{CDA02}).

\begin{definition}
Let $\mathcal{O}$ \color{blue} be the family of open subsets of $\mathbb R^n$ \color{black} and $\alpha: \mathcal{O} \rightarrow [0, + \infty].$ We say that $\alpha$ is:
\begin{itemize}
    \item[{\rm (i)}] {\sc weakly superadditive} if 
    \begin{equation*}
        \alpha (\Omega_1) + \alpha(\Omega_2)  \le \, \alpha(\Omega),
    \end{equation*}
for every $\Omega_1, \Omega_2, \Omega \in \mathcal{O}$ with $\Omega_1 \cap \Omega_2 = \emptyset,$ $\Omega_1 \cup \Omega_2 \subset \subset \Omega,$
    \item[{\rm (ii)}] {\sc weakly subadditive} if
       \begin{equation*}
           \alpha(\Omega) \le \alpha (\Omega_1) + \alpha(\Omega_2), 
       \end{equation*}
for every $\Omega, \Omega_1, \Omega_2 \in \mathcal{O}$ with $\Omega_1 \cap \Omega_2 = \emptyset,$ $\Omega \subset \subset  \Omega_1 \cup \Omega_2 $.
\end{itemize}
\end{definition}

For $p\in[1,+\infty]$, let $G$ be the functional defined by  
    \begin{equation}
    \label{(9.3.2)CDA}
    G: (\Omega, u) \in {\color{blue} \mathcal{T}_0} \times W^{1,p}_{\rm loc}(\mathbb{R}^n; \mathbb{R}^d) \mapsto G(\Omega, u) \in [0, + \infty],
    \end{equation}
and let $g_G$ be the function given by 
    \begin{equation}
    \label{(9.1.6)CDA}
     g_G: Z \in \mathbb{R}^{d\times n} \mapsto G(Y, u_Z) \in [0, + \infty],
    \end{equation}    
where $u_Z(x)\coloneqq Zx$, as above. 
We introduce the following set of assumptions:
   \begin{itemize}
       \item[(T1)]  invariance property for linear functions 
       \begin{equation}
        \notag
           G(\Omega; u_Z+c) = G(\Omega; u_Z)\qquad \mbox{for any } \Omega\in{\color{blue} {\cal T}_0}, 
       \end{equation}
       for any $\Omega\in{\color{blue} {\cal T}_0}$, $Z\in\R^{d\times n}$, and $ c\in\R^d$;
       \item[(T2)]  translation invariance 
           \begin{equation}
           \notag
           G(\Omega-x_0, T[x_0]u_Z) = G(\Omega, u_Z),
          \end{equation}
     for any $\Omega\in{\color{blue} {\cal T}_0}$, $Z\in\R^{d\times n}$ and $x_0\in\R^n$, where $T[x_0]u$ is the translate of $u$ defined by $T[x_0]u(x)\coloneqq u(x-x_0)$ (see Subsection \ref{not&pre});
     \item[(T3)] for every $u \in W^{1,\infty}_{\rm loc}(\mathbb{R}^n; \mathbb{R}^d),$ $G(\cdot, u)$ is increasing,  weakly superadditive and weakly subadditive; 
     \item[(T4)] the blow-up condition holds 
          \begin{equation}
          \notag
          \limsup_{r \to 0^+} \frac{1}{r^n} G(Q_r(x_0),u) \geq  G(Q_1(x_0), u(x_0) + Du(x_0)\cdot (\cdot-x_0)),
          \end{equation}
        for every $u \in W^{1,\infty}_{\rm loc}(\mathbb R^n;\mathbb R^d)$ and a.e. $x_0 \in \mathbb R^n$;
    \item[(T5)] for any $\Omega\in{\color{blue} {\cal T}_0}$,  $G(\Omega, \cdot)$ is $W^{1,p}(\Omega;\mathbb R^d)$-lower semicontinuous if $p \in  [1,+\infty)$, and $\cap _{q \in [1,+\infty)} W^{1,q}(\Omega;\mathbb R^d)$-lower semicontinuous if $p =+\infty$; 
    \item[(T6)]  for every $u \in W^{1,p}(\mathbb R^n;\mathbb R^d)$, $G(\cdot, u)$ is inner  regular;
    \item[(T7)] there exist $z_0 \in {\rm dom} g_G$, $r_0 > 0$ and a Radon positive measure $\mu$ on $\mathbb{R}^n$ such that $G(\Omega, u) \le \mu(\Omega)$ whenever $\Omega \in {\color{blue} {\cal T}_0}, u \in \color{blue}A_{\rm pw}\color{black}(\mathbb{R}^n; \mathbb{R}^d)$ with $Du(x) \in {\rm dom} g_G$ for a.e. $x \in \Omega$ and $\|u - u_{Z_0}\|_{W^{1, \infty}(\Omega; \mathbb{R}^d)} < r_0$; 
    \item[(T8)]  for every $\Omega \in {\color{blue} {\cal T}_0},$ $G(\Omega, \cdot)$  is convex; 
    \item[(T9)] there exist $z_0 \in \mathbb R^{d\times n}$ and  $\delta>0$ such that $B_\delta(z_0) \subseteq {\rm dom} g_G$.
    
   \end{itemize}

The next result shows that under the assumptions listed above, the unbounded functional $G$ may be represented as an integral whose energy density is given by $g_G$.

\begin{thm}
\label{teo9.3.8CDA}
Let $p \in (1, + \infty]$. Let $G$ be a functional as in \eqref{(9.3.2)CDA}, satisfying assumptions {\rm (T1)}-{\rm (T9)}, and let $g_G$ be given by \eqref{(9.1.6)CDA}. Then, \color{blue} it results that \color{black} $g_G$ \color{blue}is \color{black} convex and lower semicontinuous and the following representation
   \begin{equation}
       \notag
       G(\Omega, u) =\int_{\Omega} g_G(Du)dx
   \end{equation}
    holds for any $\Omega\in{\cal T}_0$ and $u\in W^{1,p}_{\loc}(\R^n; \R^d)$. 
\end{thm}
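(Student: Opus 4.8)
The plan is to run the classical localization-and-blow-up scheme for integral representation of local functionals, in the form tailored to unbounded functionals in \cite[Chapter 9]{CDA02}: assumptions (T7)--(T9) are the surrogates, valid in this unbounded vectorial framework, of the two-sided growth conditions used in the standard theory, and once they are in place the argument becomes a careful transcription of the scalar one. First I would record the properties of $g_G$. Convexity of $g_G$ follows from (T8) together with the identity $u_{tZ_1+(1-t)Z_2}=tu_{Z_1}+(1-t)u_{Z_2}$; lower semicontinuity of $g_G$ follows from (T5) applied on $Y$, since $Z_k\to Z$ forces $u_{Z_k}\to u_Z$ in every $W^{1,q}(Y;\R^d)$, $q<+\infty$; and (T9) makes $g_G$ finite on the ball $B_\delta(z_0)$, so the convex function $g_G$ is automatically continuous (indeed locally Lipschitz) on ${\rm int}({\rm dom}\,g_G)\ni z_0$. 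This already yields the first assertion of the statement.

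Next I would fix $u\in W^{1,p}_{\loc}(\R^n;\R^d)$ and prove that $\alpha:=G(\cdot,u)$ is the restriction to open sets of a Borel measure: by (T3) it is increasing, weakly superadditive and weakly subadditive on ${\cal T}_0$, and by (T6) inner regular, so the De Giorgi--Letta criterion (cf.\ \cite[Theorem 14.23]{DM93} and \cite[Chapter 2]{CDA02}) upgrades weak additivity to genuine additivity and gives the claim. Specializing to $u=u_Z$ and using (T1)--(T2), the measure $A\mapsto G(A,u_Z)$ is translation invariant; when $Z\in{\rm int}({\rm dom}\,g_G)$ it is moreover locally finite (by the first step and (T7)), hence a constant multiple of $\mathcal L^n$, and evaluating on $Y$ yields $G(A,u_Z+c)=G(A,u_Z)=g_G(Z)\,\mathcal L^n(A)$; a standard rescaling argument extends this identity to all $Z\in\R^{d\times n}$ and $c\in\R^d$, both sides being $+\infty$ when $Z\notin{\rm dom}\,g_G$. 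Piecing this over a polyhedral partition (whose boundaries are negligible for $G(\cdot,u)$, by super-/subadditivity and (T7)) gives $G(\Omega,u)=\int_\Omega g_G(Du)\,dx$ for every $u\in A_{\rm pw}(\R^n;\R^d)$ with $Du\in{\rm dom}\,g_G$ a.e.

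It then remains to establish the two inequalities for arbitrary $u\in W^{1,p}_{\loc}$. For $G(\Omega,u)\le\int_\Omega g_G(Du)\,dx$ I would assume the right-hand side finite, fix $\Omega'\subset\subset\Omega$, and approximate $u$ in $W^{1,p}(\Omega';\R^d)$ by piecewise affine functions $u_k\to u$ whose gradients, after a slight dilation towards $z_0$, take values in ${\rm int}({\rm dom}\,g_G)$, so that $\int_{\Omega'}g_G(Du_k)\to\int_{\Omega'}g_G(Du)$ (using continuity of $g_G$ on ${\rm int}({\rm dom}\,g_G)$, a Jensen/convexity estimate from (T8), and (T7) as a domination); then (T5) gives $G(\Omega',u)\le\liminf_k G(\Omega',u_k)=\liminf_k\int_{\Omega'}g_G(Du_k)=\int_{\Omega'}g_G(Du)$, and (T6) lets me send $\Omega'\uparrow\Omega$. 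For the reverse inequality I would use that, by the previous step, $G(\cdot,u)$ restricts to a Borel measure on $\Omega$, hence possesses a finite Lebesgue density $x_0\mapsto\lim_{r\to 0}r^{-n}G(Q_r(x_0),u)$ a.e.; by the blow-up condition (T4) and the affine formula this density is $\ge G(Q_1(x_0),u(x_0)+Du(x_0)\cdot(\cdot-x_0))=g_G(Du(x_0))$ for a.e.\ $x_0$, and integrating (discarding the nonnegative singular part) gives $G(\Omega,u)\ge\int_\Omega g_G(Du)\,dx$. Combining the three facts yields $G(\Omega,u)=\int_\Omega g_G(Du)\,dx$ on all of $W^{1,p}_{\loc}(\R^n;\R^d)$; the case $p=+\infty$ is recovered by working on $W^{1,q}_{\loc}$ with $q<+\infty$ large and checking that the resulting densities coincide.

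The main obstacle is the construction, in the $\le$ inequality, of a piecewise-affine recovery sequence with convergent energies: since no upper growth bound on $g_G$ is available, one cannot mollify freely and must instead dilate the gradients towards the interior of ${\rm dom}\,g_G$, keeping the perturbation under control through the continuity of $g_G$ there and the domination (T7); verifying the De Giorgi--Letta hypotheses in the vectorial unbounded setting (in particular that weak subadditivity plus inner regularity produce genuine subadditivity) and that $G(\cdot,u)$ does not charge lower-dimensional sets are the other delicate points. Everything else is a faithful rewriting of the scalar argument of \cite[Chapter 9]{CDA02}.
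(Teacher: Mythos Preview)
Your proposal is correct and follows essentially the same approach as the paper's: the paper's proof consists of a single sentence referring to \cite[Theorems 9.3.7 and 9.3.8]{CDA02} and stating that the argument runs via estimates on $C^1(\mathbb R^n;\mathbb R^d)$ fields (cf.\ \cite[eq.\ (9.2.1)--(9.2.2)]{CDA02}), followed by a density argument in $W^{1,q}(\Omega;\R^d)$ and the lower semicontinuity in (T5). Your sketch is precisely a detailed unpacking of that scheme---De Giorgi--Letta, representation on affine functions via translation invariance, extension to a dense smooth class, then the two-sided inequality by blow-up and approximation---with the only cosmetic difference that you pass through piecewise affine functions where the paper (following \cite{CDA02}) passes through $C^1$ fields; both intermediate classes serve the same purpose and lead to the same density/lower-semicontinuity endgame.
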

\begin{proof}[Proof] It follows along the lines of \cite[Theorems 9.3.7 and 9.3.8]{CDA02}, relying first on suitable estimates on $C^1(\mathbb R^n;\mathbb R^d)$ fields as in \cite[eq (9.2.1) and (9.2.2)]{CDA02}, \color{black} then on a standard density argument in $W^{1,q}(\Omega;\mathbb R^d)$ for any $q \in [1,+\infty)$ and on the lower semicontinuity of $G$ in the sense of ${\rm (T5)}$.  
\end{proof}

\subsection{The proof of Theorem \ref{Proposition 12.6.1}}

In this subsection, we provide the proof of Theorem \ref{Proposition 12.6.1} which relies on all the results presented in the previous subsections.

\begin{proof} [Proof of Theorem \ref{Proposition 12.6.1}]
    Let $\{\e_k\}_{k}$ be a strictly increasing sequence, then  \cite[Proposition 3.4.3]{CDA02},  ensures that there exists a further subsequence $\{\e_{k_j}\}_{j}$ such that 
        \begin{align}
            \notag
            &\sup\left\{\Gamma(L^\infty(A;\R^d))\mbox{-}\liminf_{j\to+\infty} G_{\e_{k_j}}(A, u) : A\subset\subset\Omega \right\}\notag\\
            &\quad = \sup\left\{\Gamma(L^\infty(A;\R^d))\mbox{-}\limsup_{j\to+\infty} G_{\e_{k_j}}(A, u) : A\subset\subset\Omega \right\}, \label{defmainthm}
        \end{align}
    for any $\Omega\in{\color{blue}{\cal T}_0}$ and $u\in L^\infty_\loc(\R^n; \R^d)$. 
    Fix $p\in (n, +\infty]$, so that the following embeddings $W^{1,p}_\loc(\R^n;\R^d)\subseteq C(\R^n;\R^d) \subseteq L^\infty_\loc(\R^n;\R^d)$ hold. For any $\Omega\in{\cal T}_0$, we define the functional $H(\Omega, \cdot):W^{1,p}_\loc(\R^n;\R^d)\mapsto [0,+\infty] $ as the common value of \eqref{defmainthm}. 
 
    Now, we show that the functional $H$ given by \eqref{defmainthm} satisfies assumptions (T1)-(T9). Indeed, (T1) is a consequence of the fact that the functionals  $G_{\e_{k_j}}$ as well as $H$  depend only on the gradient of the function $u$. \\
    (T2) follows from a generalization of \cite[Proposition 12.1.1]{CDA02} to the vectorial case, obtained with identical arguments. Proposition \ref{Proposition 12.2.1} combined with the fact that the functionals $G'$ and $G''$ are increasing set functions, for any $u\in L^\infty_\loc(\R^n; \R^d)$ implies (T3). To show (T4), first note that by definition of $H$ and standard results concerning $\Gamma$-limits, and their inner regular envelopes, we deduce that 
         \begin{align}
             &\limsup_{r\to 0} {\frac{1}{r^n}} H(Q_r(x_0), u)\notag\\
             &\quad = \limsup_{r\to 0} {1\over r^n} \sup\left\{ \Gamma(L^\infty(A; \R^d))\mbox{-}\limsup_{j\to+\infty} G_{\e_{k_j}}(A, u) : A\subset\subset Q_r(x_0) \right\}\notag\\
             &\quad = \limsup_{r\to 0} {1\over r^n} \sup\left\{ \Gamma(L^\infty(A; \R^d))\mbox{-}\liminf_{j\to+\infty} G_{\e_{k_j}}(A, u) : A\subset\subset Q_r(x_0) \right\}\notag\\
             &\quad \geq \limsup_{r\to 0} {1\over r^n} \sup\left\{ \Gamma(L^\infty(A; \R^d))\mbox{-}\liminf_{\e\to 0} G_{\e}(A, u) : A\subset\subset Q_r(x_0) \right\}\notag
             \end{align}
             \begin{align}
             &\quad\geq \limsup_{r\to 0} {1\over r^n} G'_- (Q_r(x_0), u)\notag\\
             &\quad\geq G'_-(Q_1(x_0), u(x_0)- Du(x_0)\cdot(\cdot-x_0)), \label{eq2mainthm}
         \end{align}
    where in the last inequality we have used Proposition \ref{Prop 12.5.2}. Since for any $\Omega\in{\color{blue} {\cal T}_0}$, $G'(\Omega, u+c) = G'(\Omega, u)$ and $G''(\Omega, u+c) = G''(\Omega, u)$ for any $u\in L^\infty_\loc(\R^n; \R^d)$ and $c\in\R^d$, an application of Proposition \ref{Prop 12.4.7} yields to 
         \begin{align}
             G'_-(Q_1(x_0), u(x_0)- Du(x_0)\cdot(\cdot-x_0)) &= G'_- (Q_1(x_0), - Du(x_0)\cdot(\cdot-x_0))\notag\\
             &= G''_- (Q_1(x_0), - Du(x_0)\cdot(\cdot-x_0))\notag\\
             &= G''_- (Q_1(x_0), u(x_0)- Du(x_0)\cdot(\cdot-x_0)).\notag
         \end{align}
    In particular, this implies that there exists  the inner regular envelope of the $\Gamma$-limit $G$, i.e., 
        \begin{align}
          &G_- (Q_1(x_0), u(x_0)- Du(x_0)\cdot(\cdot-x_0))= G'_- (Q_1(x_0), u(x_0)- Du(x_0)\cdot(\cdot-x_0))\notag\\
          &\quad = G''_-(Q_1(x_0), u(x_0)- Du(x_0)\cdot(\cdot-x_0)).\notag
        \end{align}
     Hence, using once again the properties of $\Gamma$-upper limits, from \eqref{eq2mainthm}, we get 
        \begin{align}
            &\limsup_{r\to 0} {1\over r^n} H(Q_r(x_0), u)\notag\\ 
             &\quad\geq G_- (Q_1(x_0), u(x_0)- Du(x_0)\cdot(\cdot-x_0))\notag\\
             &\quad=\sup\left\{ \Gamma(L^\infty(A; \R^d))\mbox{-}\limsup_{\e\to 0} G_{\e}(A, u(x_0)- Du(x_0)\cdot(\cdot-x_0)) : A\subset\subset Q_1(x_0) \right\}\notag\\
             &\quad\geq\sup\left\{ \Gamma(L^\infty(A; \R^d))\mbox{-}\limsup_{j\to +\infty} G_{\e_{k_j}}(A, u(x_0)- Du(x_0)\cdot(\cdot-x_0)) : A\subset\subset Q_1(x_0) \right\}\notag\\
             &\quad= H(Q_1(x_0), u(x_0)- Du(x_0)\cdot(\cdot-x_0)),\notag
        \end{align}
    which proves (T4).\\ 
    Assumption (T5) holds too. Indeed, given $\Omega\in {\color{blue} {\cal T}_0}$ and an open set with Lipschitz boundary $A$ such that $A\subset\subset \Omega$, it is known that  $\Gamma(L^\infty(A; \R^d))\mbox{-}\liminf_{j\to +\infty} G_{\e_{k_j}}(A, \cdot)$ and $\Gamma(L^\infty(A; \R^d))\mbox{-}\limsup_{j\to +\infty} G_{\e_{k_j}}(A, \cdot)$ are $W^{1,p}(\Omega; \R^d)$ (or $\cap_{q\in[1, +\infty)}W^{1,q}(\Omega; \R^d) $)-lower semicontinuous in $W^{1,p}_\loc(\R^n; \R^d)$. Therefore, $H(\Omega, \cdot)$ is also lower semicontinuous since it agrees with the last upper bound of the family of such functionals obtained letting $A$ vary with the above properties.\\
    Since, by definition, $H$ is sup of inner regular functionals, it follows that also (T6) is satisfied. To prove (T7), observe that, thanks to Proposition \ref{Prop 12.4.7}, we deduce that $g_G$ given by Theorem \ref{teo9.3.8CDA} agrees with $(\widetilde{g}^q_\hom)^{\rm ls}$. Moreover, due to (H3), there exists $\delta\in(0,1)$ such that $B_{2\delta}(0)\subseteq {\rm dom} \widetilde{g}^q_\hom \subseteq {\rm dom}\left(( \widetilde{g}^q_\hom)^{\rm ls}\right)$. Hence, choosing $z_0=0$ in (T7) and exploiting the properties of $\Gamma$-limits, as well as Proposition \ref{Prop 12.3.2}, we conclude that 
         \begin{align}
             H(\Omega, u)&\leq \sup\left\{ \Gamma(L^\infty(A; \R^d))\mbox{-}\limsup_{\e\to 0} G_\e(A, u) : A\subset\subset\Omega \right\}\notag\\
             &=\sup\left\{ G''(A, u) : A\subset\subset\Omega \right\}\notag\\
             &\leq c \mathcal L^n(\Omega),\notag
         \end{align}
    yielding the validity of (T7).  (T8) follows from the fact that, for any $q\in[1,+\infty]$, $\widetilde{g}^q_\hom$ \color{blue} is \color{black} a convex function. \\
    Moreover, the same consideration based on the convexity of $g(x, \cdot)$ and the properties inherited by $\widetilde g_{\rm hom}$, proved in Proposition \ref{prop:Proposition 12.1.3},  guarantee that $\widetilde g_{\rm hom}$ satisfies also (T9).
    Since the functional $H$ given by \eqref{defmainthm} satisfies all assumptions $(T1)$-$(T9)$, we can apply Theorem \ref{teo9.3.8CDA} so that
        \begin{equation}
            \label{intrepmainthm}
            H(\Omega, u) =\int_\Omega (\widetilde{g}^q_\hom)^{\rm ls} (Du)dx,
        \end{equation}
    for any $\Omega\in{\cal T}_0$ and $u\in W^{1,p}_\loc (\R^n; \R^d)$, where we have used the fact that, due to Proposition \ref{Prop 12.4.7}, $g_G$ of Theorem \ref{teo9.3.8CDA} agrees with $( \widetilde{g}^q_\hom)^{\rm ls}$. Since the representation \eqref{intrepmainthm} holds for any subsequence $\{\e_{k_j}\}_j$ of $\{\e_k\}_k$, by Uryshon's property, we conclude that 
        \begin{equation}
            \notag
            G'_{-}(\Omega, u) = G''_{-}(\Omega, u) = \int_\Omega  (\widetilde{g}^q_\hom)^{\rm ls} (Du)dx,
        \end{equation}
    for any $\Omega\in{\cal T}_0$ and $u\in W^{1,p}_\loc (\R^n; \R^d)$, as desired.

   The conclusion is achieved by repeating word by word the proof of \cite[Proposition 2.7.4]{CDA02},  exploiting the convexity of $\Omega$.
   
\end{proof}

\section{Homogenization}

\label{cinque}

In this section, we \color{blue} provide a homogenization result for the family of energies $\{F_\e\}_{\e}$ given by \eqref{def:supfunctional}, arguing directly in terms of supremal functionals, exploiting the results on the homogenization of unbounded integral functionals obtained in the previous section.
\color{black}

\begin{thm}
\label{Theorem5.1}
    Let $\Omega$ be a bounded convex open subset of $\R^n$. Let $f: \R^n\times \R^{d\times n}\to[0,+\infty)$ be a Borel function, $1$-periodic in the first variable satisfying the growth conditions \eqref{growthconditions}. For any $\varepsilon >0$, let $F_\e$ be the supremal functional defined by \eqref{def:supfunctional} and let $F^{\color{blue} \hom}$ be the functional defined by 
          \begin{equation}
              \notag
              F^{\color{blue}\hom\color{black}}(u):= \underset{x\in \Omega}{\mbox{{\rm ess}-{\rm sup}}}\hspace{0.1cm} \widetilde{f}_\hom (Du(x)),
          \end{equation}
   where $\widetilde{f}_\hom$ is given by \eqref{characterizationftildehom}. We have that
   \begin{itemize}
       \item [\rm (i)] For any $u\in W^{1,\infty}(\Omega; \hspace{0.03cm}\R^d)$ and for any $u_\e\in W^{1,\infty}(\Omega; \hspace{0.03cm}\R^d)$ such that $u_\e$ uniformly converges to $u$, then 
           \begin{equation}
               \label{Gammaliminfine}
               \liminf_{\e\to 0} F_\e(u_\e) \geq F^{\color{blue}\hom} (u).
           \end{equation}
    \item[\rm (ii)] Assume that $f(x,\cdot)$   
    is level convex and continuous for every $x\in Y$, \color{black} and there exist matrices $\{A_i\}_{i=1}^{nd}\subset\R^{d\times n}$ vertices of a cube $Q \ni 0$, such that
        \begin{equation}
            \notag
            f(x, A_i)\coloneqq \min_{Z\in\R^{d\times n}}f(x, Z),
        \end{equation}
        for every $x\in\R^n$.
     Then, for any $u\in W^{1,\infty}(\Omega; \hspace{0.03cm}\R^d)$, there exists a sequence $\{u_\e\}_\varepsilon\subset W^{1,\infty}(\Omega; \hspace{0.03cm}\R^d)$ such that $u_\e$ uniformly converges to $u$ and 
              \begin{equation}
               \notag
               \lim_{\e\to 0} F_\e(u_\e) = F{\color{blue}^{\hom}} (u).
           \end{equation}
   \end{itemize}
\end{thm}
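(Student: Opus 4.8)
\textbf{Part (i): the $\Gamma$-$\liminf$ inequality.} The plan is to pass through the $L^p$-approximation results already established. Fix $u \in W^{1,\infty}(\Omega;\R^d)$ and $u_\e \to u$ uniformly with $\liminf_\e F_\e(u_\e) =: L < +\infty$ (otherwise nothing to prove); pass to a subsequence realizing the liminf. Using the coercivity $\alpha|Z| \le f(x,Z)$ in \eqref{growthconditions}, the bound $F_\e(u_\e) \le L+1$ forces $\|Du_\e\|_{L^\infty(\Omega;\R^{d\times n})}$ to be bounded, so $u_\e \rightharpoonup^* u$ in $W^{1,\infty}$. For any fixed $p \in (1,+\infty)$ and any measurable $A \subseteq \Omega$ with $\mathcal L^n(A) > 0$, Hölder's inequality gives
\[
\left(\frac{1}{\mathcal L^n(A)}\int_A f^p\!\left(\tfrac{x}{\e}, Du_\e\right)dx\right)^{1/p} \le \esssup_{x\in\Omega} f\!\left(\tfrac{x}{\e}, Du_\e(x)\right) = F_\e(u_\e).
\]
Hence $\liminf_\e \left(\int_A f^p(\tfrac{x}{\e}, Du_\e)dx\right)^{1/p} \le \mathcal L^n(A)^{1/p}\, L$; localizing the functional $F_{p,\e}$ to $A$ and invoking the $\Gamma(L^p)$-convergence \eqref{GammaLp}–\eqref{def:Fhomp} (which holds on every nice open subset), together with the fact that $u_\e \to u$ in $L^p(A;\R^d)$, we get $\left(\int_A f^\hom_p(Du)\,dx\right)^{1/p} \le \mathcal L^n(A)^{1/p} L$, i.e. $\big(\tfrac{1}{\mathcal L^n(A)}\int_A f^\hom_p(Du)\,dx\big)^{1/p} \le L$ for every $p$. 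Letting $p \to +\infty$ and using Lemma \ref{lemma:convergenceofhomdensity} (applied on $A$) gives $\esssup_{x\in A} \widetilde f_\hom(Du(x)) \le L$. Since $A$ is an arbitrary positive-measure subset of $\Omega$, we conclude $\esssup_{x\in\Omega}\widetilde f_\hom(Du(x)) = F^\hom(u) \le L$, which is \eqref{Gammaliminfine}. (Alternatively, one can run the argument once on $A=\Omega$ using that $F^\hom$ in Theorem \ref{Theorem3.6} is precisely the $\Gamma(L^\infty)$-limit of $F^\hom_p$, and note $F_\e \ge$ the $\e$-rescaled $F_{p,\e}$-average, avoiding the localization; the localized version is cleaner for passing the sup inside.)

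\textbf{Part (ii): the $\Gamma$-$\limsup$ (recovery sequence) via the indicator-function trick.} Under level convexity and continuity of $f(x,\cdot)$, Remark \ref{rmk:cellformula} and the cell formula \eqref{cellformula} give $\widetilde f_\hom(Z) = \inf\{\esssup_{x\in Y} f^{\rm ls}(x, Z+Du) : u \in W^{1,\infty}_\#(Y;\R^d)\}$, and here $f(x,\cdot) = f^{\rm ls}(x,\cdot)$ since $f$ is continuous. The strategy is the standard one for supremal functionals: for a fixed $u \in W^{1,\infty}(\Omega;\R^d)$, set $\lambda := F^\hom(u) = \esssup_{x\in\Omega}\widetilde f_\hom(Du(x))$ and consider, for each $t > \lambda$, the sublevel set
\[
C_t(x) := \{Z \in \R^{d\times n} : f(x,Z) \le t\}.
\]
By level convexity $C_t(x)$ is convex; by continuity it is closed; by the coercivity in \eqref{growthconditions} it is bounded; and the hypothesis that the common minimum of $f(x,\cdot)$ is attained at the vertices $\{A_i\}$ of a cube $Q \ni 0$ (with $f(x,A_i) = \min f(x,\cdot) \le \beta$, and $\min f(x,\cdot) \le f(x,0) \le \beta$, so $C_t(x) \supseteq Q$ for $t \ge \beta$, giving nonempty interior uniformly in $x$) shows $C_t(\cdot)$ satisfies the structural hypotheses needed to apply Theorem \ref{Theorem5.1}'s machinery. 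Now apply Theorem \ref{Proposition 12.6.1} to $g(x,Z) := 1_{C_t(x)}(Z)$: the verification of (H1)–(H4) is exactly the content flagged in Remark \ref{remark:existenceofcube} (H1 from Borel measurability of $f$; H2 from periodicity and convexity of $C_t$; H3–H4 from the cube $Q$ sitting inside $\mathrm{dom}\,\widetilde g^q_\hom$ because $\int_Y 1_{C_t(y)}(A_i)\,dy = 0 < +\infty$). The homogenized density is $(\widetilde g^q_\hom)^{\rm ls}$, and one identifies $\{(\widetilde g^q_\hom)(Z) = 0\} = \{Z : \widetilde f_\hom(Z) \le t\}$ — this uses the cell formula \eqref{cellformula} for $\widetilde f_\hom$ together with \eqref{ftildeCDA} for $\widetilde g^q_\hom$: $\widetilde g^q_\hom(Z) = 0$ iff there is $v \in W^{1,\infty}_\#(Y;\R^d)$ with $Z + Dv(y) \in C_t(y)$ a.e., i.e. $\esssup_Y f(y, Z+Dv) \le t$, i.e. $\widetilde f_\hom(Z) \le t$. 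Since $\mathrm{dom}\,\widetilde f_\hom \supseteq \{t\text{-sublevels for all }t\}$ and $\Omega$ is convex, Theorem \ref{Proposition 12.6.1} yields $\Gamma(L^\infty)$-convergence of $G_\e(u) = \int_\Omega 1_{C_t(x/\e)}(Du)\,dx$ to $\int_\Omega 1_{\{\widetilde f_\hom \le t\}}(Du)\,dx$. Because $\widetilde f_\hom(Du(x)) \le \lambda < t$ a.e., the limit functional equals $0$ at $u$, so there is a recovery sequence $u_\e^{(t)} \to u$ in $L^\infty$ with $\int_\Omega 1_{C_t(x/\e)}(Du_\e^{(t)})\,dx \to 0$; for this to be finite (hence eventually $=0$ when the set function forces it), $Du_\e^{(t)}(x) \in C_t(x/\e)$ a.e. for $\e$ small, i.e. $F_\e(u_\e^{(t)}) = \esssup f(x/\e, Du_\e^{(t)}) \le t$.

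\textbf{Diagonalization and conclusion of (ii).} Take a sequence $t_k \downarrow \lambda$. For each $k$, the above gives $\e_k > 0$ and $u_{\e}^{(k)} \to u$ uniformly as $\e \to 0$ with $\limsup_{\e\to 0} F_\e(u_\e^{(k)}) \le t_k$; more carefully one needs, for every $\e < \e_k$ (after possibly shrinking), $\|u_\e^{(k)} - u\|_{L^\infty} < 1/k$ and $F_\e(u_\e^{(k)}) \le t_k + 1/k$. A standard diagonal argument (choosing $\e \mapsto u_\e := u_\e^{(k(\e))}$ with $k(\e) \to \infty$ slowly as $\e \to 0$) produces $u_\e \to u$ in $L^\infty(\Omega;\R^d)$ with $\limsup_{\e\to 0} F_\e(u_\e) \le \lambda = F^\hom(u)$. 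Combined with part (i) (which gives $\liminf_\e F_\e(u_\e) \ge F^\hom(u)$ for this very sequence), we obtain $\lim_{\e\to 0} F_\e(u_\e) = F^\hom(u)$, proving (ii).

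\textbf{Main obstacle.} The delicate point is the identification $\{\widetilde g^q_\hom = 0\} = \{\widetilde f_\hom \le t\}$ and, more subtly, controlling the passage from ``$\int_\Omega 1_{C_t(x/\e)}(Du_\e) \to 0$'' to the \emph{pointwise} constraint ``$Du_\e(x) \in C_t(x/\e)$ a.e.''. The latter is not automatic from $L^1$-convergence of indicator integrals to zero; one must use that the $\Gamma$-limit functional is itself an indicator-type (unbounded) functional, so a recovery sequence \emph{with finite energy} has energy exactly $0$ for $\e$ small — this is where the unbounded-functional formalism of Section \ref{secHomunb}, rather than a soft integral argument, is essential. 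A secondary technical nuisance is checking that the cube/vertex hypotheses on $f$ transfer to the sublevel sets $C_t(x)$ uniformly in $x$ and for all $t$ in a right-neighborhood of $\lambda$, and that $\mathrm{dom}\,\widetilde f_\hom$ is large enough (which follows from the upper growth bound $f(x,Z) \le \beta(|Z|+1)$ and level convexity, giving $\widetilde f_\hom(Z) \le \beta(|Z|+1)$, hence $\widetilde f_\hom$ is everywhere finite and the sublevel sets exhaust $\R^{d\times n}$ as $t \to \infty$).
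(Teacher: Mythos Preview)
Your proposal is correct and follows essentially the same strategy as the paper: part (i) via the $L^p$-approximation and Lemma \ref{lemma:convergenceofhomdensity}, part (ii) via the indicator-function trick and Theorem \ref{Proposition 12.6.1}. Two minor differences are worth noting.

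In part (i), your localization to subsets $A\subseteq\Omega$ is unnecessary (and slightly sloppy, since the homogenization result \eqref{GammaLp} is stated on open sets, not arbitrary measurable ones). The paper simply runs the chain of inequalities once on $\Omega$---exactly the alternative you mention parenthetically---and lets $p\to\infty$ via Lemma \ref{lemma:convergenceofhomdensity}.

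In part (ii), the paper works directly at the level $M:=F^{\hom}(u)$, i.e.\ with $C(x)=\{Z:f(x,Z)\le M\}$, and obtains the exact identification $\widetilde 1^\infty_{\hom}=1_{\{\widetilde f_{\hom}\le M\}}$ by arguing that the infima defining both $\widetilde f_{\hom}$ and $\widetilde 1^\infty_{\hom}$ are actually minima (using lower semicontinuity, level convexity and coercivity of $f$, together with Proposition \ref{prop:Proposition 12.1.3}\,(iv)). This lets the paper produce the recovery sequence in one shot, with no diagonalization. Your variant takes $t>\lambda$, which makes the identification direction $\{\widetilde f_{\hom}<t\}\subseteq\{\widetilde g^q_{\hom}=0\}$ immediate without attainment, at the cost of a final diagonal argument over $t_k\downarrow\lambda$. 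Both routes are valid; the paper's is shorter, yours avoids invoking existence of minimizers for the cell problem.
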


The proof of the liminf inequality relies on the results obtained by 
 the $L^p$-approximation. Instead, the proof of the limsup inequality is trickier and is based on a generalization of \cite[Chapter 12]{CDA02} in the vectorial case.  This is the reason why we require the convexity of the domain $\Omega$, 
 and the level convexity and the upper semicontinuity of $f$. 
  Once such a generalization is obtained (cf. Section \ref{secHomunb}), the proof of the limsup inequality is an adaption of the techniques developed in \cite[Proposition 4.4]{BGP04}. However, for the readers' convenience, we provide a complete proof.

 We require that $f(x,\cdot)$ is a continuous function since in view of Remark \ref{rmk:cellformula}  we can specialize the homogenized energy density as follows
             \begin{equation}
                 \notag
                 \widetilde{f}_{\hom} (Z) = \inf\left\{  \underset{x\in Y}{\mbox{{\rm ess}-{\rm sup}}}\hspace{0.1cm}f(x, Z+ Du(x))  : u\in W^{1,\infty}_{\#}(Y; \R^d ) \right\}.
             \end{equation}

\begin{proof}
\begin{itemize}
    \item [\rm (i)] We prove the $\Gamma$-liminf inequality. To that end, without loss of generality, we may assume that $\sup_{\e> 0} F_\e(u_\e)<\infty$. Since $f$ satisfied the growth conditions \eqref{growthconditions}, we get the homogenization result proven in \eqref{def:Fhomp} and \eqref{def:fhomp} for the functionals $\{F_{p, \e}\}_{p,\e}$ introduced in \eqref{def:Functionalpeps}. In particular, for any $p>1$,
        \begin{align}
            \left (\int_{\Omega}f^\hom_p(Du(x))dx \right )^{1/p} &\leq \liminf_{\e\to 0} \left( \int_{\Omega}f^p\left({x\over \e}, Du_{\e}(x)\right)dx  \right)^{1/p}\notag\\
            & \leq \left({\cal L}^n(\Omega)\right)^{1/p} \liminf_{\e\to 0} \esssup \hspace{0.1cm} f\left({x\over \e}, Du_{\e}(x)\right)\notag.
        \end{align}
    Thanks to Lemma \ref{lemma:convergenceofhomdensity}, we pass into the limit as $p\to+\infty$ obtaining that 
        \begin{align}
            \esssup\hspace{0.1cm}  \widetilde{f}_\hom (Du(x)) & =\lim_{p\to+\infty} \left (\int_{\Omega}f^\hom_p(Du(x))dx \right )^{1/p} \nonumber\\
            & \leq  \liminf_{\e\to 0} \esssup \hspace{0.1cm} f\left({x\over \e}, Du_{\e}(x)\right)\notag,
        \end{align}
which shows \eqref{Gammaliminfine}.

\item[\rm (ii)] The proof of the $\Gamma$-limsup inequality relies on the homogenization Theorem \ref{Proposition 12.6.1} proved in Section \ref{secHomunb}.  Fix $\overline{u}\in W^{1,\infty}(\Omega; \R^d)$ and set $M\coloneqq \esssup\hspace{0.01cm}\widetilde{f}_{\hom} (D\overline{u}(x))$. We aim at finding a sequence $\{u_\e\}_\e\subset W^{1,\infty}(\Omega; \R^d)$ such that $u_\e\to\overline{u}$ in $L^\infty(\Omega; \R^d)$ as $\e\to 0$ and 
      \begin{equation*}
          \limsup_{\e\to 0} F_\e(u_\e) \leq M.
      \end{equation*}
To that end, for any $x\in\Omega$, we introduce the sets $C(x)$ and $C_\infty$ defined by
    \begin{equation}
        \notag
        C(x)\coloneqq \{Z\in\R^{d\times n} : f(x, Z)\leq M  \}, \qquad C_\infty\coloneqq\{Z\in\R^{d\times n} : \widetilde{f}_\hom(Z)\leq M \}.
    \end{equation}
In view of the assumptions on $f$, it \color{blue}results \color{black} that the set $C(x)$ is measurable, $1$-periodic and convex. {\color{blue} Recall that a set $C\subset \R^{d\times n}$ is $1$-periodic if its indicator function $1_C$ is $1$-periodic}. Since $A_i$ \color{blue} are \color{black} minimum points of $f(x, \cdot)$, we deduce that $A_i\in C(x)$, for any $i=1,\dots, dn$. All these assumptions permit us to apply Theorem \ref{Proposition 12.6.1} to the indicator function $1_{C(x)}$ obtaining
     \begin{equation}\label{eq0.01}
         \Gamma(L^\infty)\mbox{-}\lim_{\e\to 0} \int_\Omega 1_{C\left( {x\over \e}\right)}(Du(x))dx =\int_\Omega \widetilde{1}^\infty_{\hom}(Du(x))dx,
     \end{equation}
where the homogenized energy density $\widetilde{1}^\infty_{\hom}$ is given by the cell formula 
    \begin{equation}
        \notag
        \widetilde{1}^\infty_{\hom}(Z)= \inf \left\{ \int_Y 1_{C(y)}(Z+Dv(y))dy :  v\in W^{1,\infty}_{\#}(Y; \R^d)  \right\}.
    \end{equation}
Thanks to Proposition \ref{prop:Proposition 12.1.3} applied to $p=q=+\infty$, the infimum is actually achieved.
To conclude the proof, it remains to prove that 
    \begin{equation}
        \notag
        \widetilde{1}^\infty_{\hom} (Z) = 1_{C_\infty}(Z) \qquad\mbox{for any } Z\in\R^{d\times n}.
    \end{equation}
It is enough to show that 
   \begin{equation}
       \notag
        \widetilde{1}^\infty_{\hom} (Z) =0\quad  \Leftrightarrow \quad 1_{C_\infty}(Z)=0.
   \end{equation}
Indeed, 
   \begin{align}
       \widetilde{1}^\infty_{\hom} (Z) =0  &\Leftrightarrow \mbox{there exists } v\in W^{1,\infty}_{\#}(Y; \R^d) : \int_Y 1_{C(y)}(Z+Dv(y))dy =0\notag\\
       &\Leftrightarrow \mbox{there exists } v\in W^{1,\infty}_{\#}(Y; \R^d) :  f(x, Z+ Dv(x)) \leq M \quad \mbox{for a.e. } x\in\Omega\notag\\
       &\Leftrightarrow \mbox{there exists } v\in W^{1,\infty}_{\#}(Y; \R^d) : \esssup \hspace{0.01cm} f(x, Z+Dv(x)) \leq M. \label{0.01}
   \end{align}
Thanks to lower semicontinuity and level convexity of $f$, the supremal functional $\mbox{ess-sup}_{x\in Y}f(x, Z+Dv(x))$ \color{blue} is \color{black} lower semicontinuous and coercive in $W^{1, \infty}_{\#}(Y; \R^d)$ and thus the infimum in the definition of $\widetilde{f}_{\hom}$ is achieved. 
Then, the last condition in \eqref{0.01} is equivalent to $\widetilde{f}_\hom(Z)\leq M$., i.e., $1_{C_\infty}(Z)=0$. Thanks to \eqref{eq0.01}, there exists a sequence $\{u_\e\}_\e$ of functions in $W^{1,\infty}(\Omega; \R^d)$ such that $u_\e$ uniformly converges to $\overline{u}$ as $\e\to 0$ and 
     \begin{equation}
         \notag
         \limsup_{\e\to 0} \int_\Omega 1_{C\left({x\over \e} \right)}(Du_\e(x))dx \leq \int_\Omega 1_{C_\infty} (D\overline{u}(x))dx =0.
     \end{equation}
In particular, there exists $\e_0>0$ such that, for any $\e\leq\e_0$, 
     \begin{equation}
         \notag
         \int_\Omega 1_{C\left({x\over \e} \right)}(Du_\e(x))dx =0.
     \end{equation}
In other words, 
    \begin{equation}
        \notag
        1_{C\left({x\over \e} \right)}(Du_\e(x)) =0  \quad \mbox{for a.e. } x\in\Omega.
    \end{equation}
In view of the definition of $C(x)$, it follows that $\esssup\hspace{0.01cm} f({x\over \e}, Du_\e(x))\leq M$ which, in turn, implies that $F_\e(u_\e)\leq M$ and 
    \begin{equation}
        \notag
        \limsup_{\e\to 0} F_\e(u_\e) \leq F{\color{blue}^{\hom}} (\overline{u}),
    \end{equation}
as desired.
\end{itemize}
\end{proof}

\color{blue}
\appendix
\section{Appendix}

For the readers's convenience, we state a $\Gamma$-convergence result which, as a particular case, provides a proof of the upper horizontal arrow in our diagram, only considering $\varepsilon$-dependence. The proof can be found in \cite[Section 5]{PZ20}.

\begin{thm}\label{curlcase2} Let $\Omega\subseteq \R^n$ be a bounded open set with Lipschitz boundary.
	Let $f:\Omega\times \mathbb R^{d\times n}\to[0,+\infty)$ be a $\mathcal L^n(\Omega) \otimes \mathcal B(\R^{d\times n})$-measurable function satisfying \eqref{growthconditions}.
	For every $p\geq 1$ let  
	$F_p: C(\bar \Omega;\R^d)\to [0,+\infty)$  be the functional given by 
	\begin{equation}
\notag
 F_p(u):=\left\{\begin {array}{cl}
	\displaystyle \left( \int_{\Omega} f^p(x,\nabla u(x))dx \right)^{1/p}
	&  \hbox{if } \, u\in W^{1,p}(\Omega;\R^d),\\
	+\infty  & \hbox{otherwise}.
\end{array}\right.
\end{equation}  
\noindent Then there exists a $\mathcal L^n(\Omega) \otimes \mathcal B(\R^{d\times n})$-measurable function $f_{\infty}:\Omega\times  \mathbb R^{d\times n}\to [0,+\infty)$ such that    $\{F_p\}_{p\geq 1}$ $\Gamma(L^{\infty})$-converges, as $p\to\infty$, to the functional $\bar F:C(\bar \Omega;\R^d)\to \overline \R$ defined as

 \begin{equation}\notag
\bar F(u):=\left\{\begin {array}{cl} \displaystyle {\rm ess}\sup_{x\in \Omega}
f_{\infty}(x,\nabla u(x))
&  \hbox{if } \, u\in W^{1,\infty}(\Omega;\R^d),\\
+\infty  & \hbox{otherwise.}
\end{array}\right.
\end{equation}
Moreover for a.e. $x\in \Omega$, $f_{\infty}(x,\cdot)$  is a strong Morrey quasiconvex function satisfying 
\begin{equation}
 \notag
f_{\infty}(x,\cdot)\geq Q_{\infty}f(x,\cdot):=\sup_{p\geq 1} (Qf^p)^{1/ p}(x,\cdot),  
\end{equation}
where $Qf^p(x,\cdot):=Q(f^p)(x,\cdot)$ stands for the quasiconvex envelope of $f^p(x,\cdot)$.
\end{thm}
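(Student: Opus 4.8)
The plan is to reproduce, in the present vectorial setting, the $L^p$-approximation scheme of \cite[Section~5]{PZ20}: build the effective supremal density out of the \emph{relaxed} densities $\widetilde{f^k}$ (since $f(x,\cdot)$ is only Borel, $f$ itself carries no semicontinuity), and then check the two $\Gamma$-inequalities separately. As a preliminary step I would show that $p\mapsto(\widetilde{f^p})^{1/p}(x,Z)$ is non-decreasing: writing $f^{p_1}=(f^{p_2})^{p_1/p_2}$ and applying a Hölder/Jensen inequality inside the relaxation procedure gives $(\widetilde{f^{p_1}})^{1/p_1}\le(\widetilde{f^{p_2}})^{1/p_2}$ for $p_1<p_2$, exactly as in the proof of Lemma~\ref{lemma:recoveryseq}. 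Consequently $f_\infty(x,Z):=\sup_{k\ge1}(\widetilde{f^k})^{1/k}(x,Z)=\lim_{k\to\infty}(\widetilde{f^k})^{1/k}(x,Z)$ is well defined and $\mathcal L^n(\Omega)\otimes\mathcal B(\R^{d\times n})$-measurable (a countable supremum of Carathéodory functions), and $(\widetilde{f^p})^{1/p}\le f_\infty$ for every $p\ge1$. Its strong Morrey quasiconvexity follows as in Section~\ref{tre}: each $\widetilde{f^k}(x,\cdot)$ is quasiconvex, hence strong Morrey quasiconvex by \cite[Proposition~2.4]{BJW04}; the subadditivity of $t\mapsto t^{1/k}$ transfers this to $(\widetilde{f^k})^{1/k}(x,\cdot)$; and a supremum of strong Morrey quasiconvex functions is strong Morrey quasiconvex by \cite[Proposition~5.2]{PZ20}. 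Finally, $f_\infty\ge Q_\infty f$ because $Qf^p\le\widetilde{f^p}$ a.e.: the quasiconvex functional $\int_\Omega Qf^p(x,\nabla\cdot)\,dx$ is weakly $W^{1,p}$-lower semicontinuous and pointwise dominated by $\int_\Omega f^p(x,\nabla\cdot)\,dx$, hence dominated by the relaxation $\int_\Omega\widetilde{f^p}(x,\nabla\cdot)\,dx$, and testing on linear functions on small cubes gives the pointwise inequality of the densities.

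For the $\Gamma$-$\liminf$ inequality, take $u_p\to u$ in $L^\infty(\Omega;\R^d)$ with $L:=\liminf_p F_p(u_p)<+\infty$; passing to a subsequence, assume $F_p(u_p)\to L$. The coercivity in \eqref{growthconditions} gives $\|\nabla u_p\|_{L^p(\Omega)}\le\alpha^{-1}F_p(u_p)$, hence $\|\nabla u_p\|_{L^q(\Omega)}\le|\Omega|^{1/q-1/p}\alpha^{-1}F_p(u_p)$ is bounded for each fixed $q$; thus $u_p\rightharpoonup u$ in $W^{1,q}(\Omega;\R^d)$ and, letting $q\to\infty$ in the lower semicontinuity of the norm, $u\in W^{1,\infty}(\Omega;\R^d)$. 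Now fix an integer $q\ge2$; for $p\ge q$, Jensen's inequality with the convex map $t\mapsto t^{p/q}$ yields $F_p(u_p)^q\ge|\Omega|^{q/p-1}\int_\Omega f^q(x,\nabla u_p)\,dx\ge|\Omega|^{q/p-1}\int_\Omega\widetilde{f^q}(x,\nabla u_p)\,dx$, the last step because relaxation does not increase the value of the functional. Since $\widetilde{f^q}$ is Carathéodory, quasiconvex and $q$-growth, $v\mapsto\int_\Omega\widetilde{f^q}(x,\nabla v)\,dx$ is sequentially weakly lower semicontinuous on $W^{1,q}$, so $\int_\Omega\widetilde{f^q}(x,\nabla u)\,dx\le\liminf_p\int_\Omega\widetilde{f^q}(x,\nabla u_p)\,dx\le|\Omega|L^q$, i.e. $\|(\widetilde{f^q})^{1/q}(\cdot,\nabla u)\|_{L^q(\Omega)}\le|\Omega|^{1/q}L$. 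Fixing then $q'\ge2$ and letting $q\to\infty$ through integers $\ge q'$, Hölder's inequality gives $\|(\widetilde{f^q})^{1/q}(\cdot,\nabla u)\|_{L^{q'}(\Omega)}\le|\Omega|^{1/q'}L$, and monotone convergence (using $(\widetilde{f^q})^{1/q}(\cdot,\nabla u)\uparrow f_\infty(\cdot,\nabla u)$) yields $\|f_\infty(\cdot,\nabla u)\|_{L^{q'}(\Omega)}\le|\Omega|^{1/q'}L$; letting $q'\to\infty$ gives $\overline F(u)=\|f_\infty(\cdot,\nabla u)\|_{L^\infty(\Omega)}\le L$.

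For the $\Gamma$-$\limsup$ inequality, let $u\in W^{1,\infty}(\Omega;\R^d)$ and $M:=\overline F(u)<+\infty$ (when $\overline F(u)=+\infty$, in particular when $u\notin W^{1,\infty}$, the sequence $u_p\equiv u$ works, since then $F_p(u)\ge\alpha\|\nabla u\|_{L^p(\Omega)}\to+\infty$, or $F_p(u)=+\infty$). For each integer $p>n$, from $(\widetilde{f^p})^{1/p}\le f_\infty$ we get $\widetilde{f^p}(x,\nabla u(x))\le f_\infty(x,\nabla u(x))^p\le M^p$ a.e., so $\int_\Omega\widetilde{f^p}(x,\nabla u)\,dx\le M^p|\Omega|$. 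By Buttazzo's relaxation theorem \cite[Theorem~4.4.1]{Buttazzo}, this integral equals $\inf\{\liminf_j\int_\Omega f^p(x,\nabla w_j)\,dx:\ w_j\rightharpoonup u\ \text{in}\ W^{1,p}(\Omega;\R^d)\}$; pick a recovery sequence $w_j$ with $\int_\Omega f^p(x,\nabla w_j)\,dx\to\int_\Omega\widetilde{f^p}(x,\nabla u)\,dx$. Being bounded in $W^{1,p}$ with $p>n$, by the compact embedding $W^{1,p}(\Omega;\R^d)\hookrightarrow\hookrightarrow C(\overline\Omega;\R^d)$ (here $\Omega$ Lipschitz suffices) these $w_j$ converge to $u$ uniformly; choose $j(p)$ with $\|w_{j(p)}-u\|_{C(\overline\Omega)}<1/p$ and $\int_\Omega f^p(x,\nabla w_{j(p)})\,dx\le M^p|\Omega|+\eta_p$, where $\eta_p:=\max\{M^p,p^{-p}\}$, and set $u_p:=w_{j(p)}\in W^{1,p}(\Omega;\R^d)$. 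Then $u_p\to u$ in $L^\infty(\Omega;\R^d)$ and $F_p(u_p)\le(M^p|\Omega|+\eta_p)^{1/p}$, which tends to $M$ as $p\to\infty$ (one has $(M^p(|\Omega|+1))^{1/p}\to M$ if $M>0$, and $(p^{-p})^{1/p}=1/p\to0$ if $M=0$); non-integer $p$ are handled by interpolating between consecutive integers using the monotonicity of $p\mapsto F_p(v)$. Hence $\limsup_p F_p(u_p)\le M=\overline F(u)$, and together with the $\liminf$ inequality and the sequential characterization of $\Gamma$-convergence this shows $\Gamma(L^\infty)\text{-}\lim_{p\to\infty}F_p=\overline F$ with effective density $f_\infty$.

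The main obstacle is exactly that $f(x,\cdot)$ is merely Borel, so neither $\int_\Omega f^p(x,\nabla\cdot)\,dx$ nor $F_p$ is lower semicontinuous and one cannot work with $f$ directly; every semicontinuity argument must be routed through the relaxed Carathéodory quasiconvex densities $\widetilde{f^k}$, and the real content is to verify a posteriori that $f_\infty=\sup_k(\widetilde{f^k})^{1/k}$ is the correct effective supremal density and that it dominates $Q_\infty f$. A secondary nuisance is that the integrand in $F_p$ varies with $p$; this is circumvented by freezing the auxiliary exponents $q,q'$, comparing across exponents by Hölder and Jensen, and exploiting the monotonicity of $p\mapsto(\widetilde{f^p})^{1/p}$ established at the outset. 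The upgrade from weak $W^{1,p}$ to uniform convergence needed to produce admissible recovery sequences in $C(\overline\Omega;\R^d)$ is harmless since it only requires $p>n$ and we are letting $p\to\infty$.
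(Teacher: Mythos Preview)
Your proposal is correct and takes essentially the same approach as \cite[Section~5]{PZ20}, to which the paper defers for the proof of this theorem (it does not give an independent argument). The construction of $f_\infty=\sup_{k\ge1}(\widetilde{f^k})^{1/k}$ via monotonicity, its strong Morrey quasiconvexity through \cite[Proposition~2.4]{BJW04} and \cite[Proposition~5.2]{PZ20}, and the two $\Gamma$-inequalities routed through the relaxed Carath\'eodory densities $\widetilde{f^q}$ match both that reference and the partial arguments the paper reproduces in its Appendix and in Lemma~\ref{lemma:recoveryseq}.
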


We observe that if $f$ is continuous and $f(x,\cdot)$ is level convex for every $x \in \Omega$, then $f_\infty= f$. We refer to \cite[Remark 5.1]{PZ20} for more comments. Indeed this latter observation allows us to prove that the diagram in the introduction is commutative.

\bigskip

Next, we comment on the equality \eqref{eqhf}, recalling the arguments in \cite[proof of Theorem 2.2]{PZ20}. 
Since for every $p\geq 1$, $\widetilde{ f^p}$ is a  Carath\'edory function, we deduce that 
\begin{equation}\label{pq} 
(\widetilde{ f^p})^{1/ p}(x,Z)\leq (\widetilde{ f^q})^{1/ q}(x,Z)\end{equation} for a.e. $x\in \Omega$ and $Z\in  \R^{d\times n}$.
Then, set 
\begin{equation}\label{finftydef}f_{\infty}(x,Z):= \sup_{k\geq 1} (\widetilde{ f^k})^{ 1/ k}(x,Z),
\end{equation} it results that 
$f_{\infty}$ is $\mathcal L^n(\Omega) \otimes \mathcal B(\R^{d\times n})$-measurable function, being the countable supremum of Carath\'eodory functions, 
and for a.e. $x\in \Omega$ and $Z\in  \R^{d\times n}$,
\begin{equation} \notag
f_{\infty}(x,Z)=\lim_{k\to\infty}  \big(\widetilde{ f^k}\big)^{1/k}(x,Z).
\end{equation}
Hence, taking  into account \eqref{growthconditions} and \eqref{finftydef}, it results
\begin{equation}\notag
 \alpha |Z|\leq f_{\infty}(x,Z) \leq \beta (1+ |Z|).
\end{equation}

Moreover, as proven in \cite[Proposition 5.1]{PZ20}  for a.e. fixed $x\in \Omega$, the function $f_{\infty}(x,\cdot)$ is strong Morrey quasiconvex.  

Finally, it is easy to show that if  $\{p_k\}_k$ is a divergent sequence, the $\mathcal L^n(\Omega) \otimes \mathcal B(\R^{d\times n})$-measurable function $h_{\infty}:\Omega\times \R^{d\times n}\to [0,\infty]$ defined by $$h_{\infty}(x,Z):=\sup_{k\geq 1} (\widetilde{ f^{p_k}})^{ 1/ {p_k}}(x,Z),$$ satisfies 

\begin{equation*}\esssup_{\Omega} h_{\infty}(x,Du(x)) =\esssup_{\Omega} f_{\infty}(x,Du(x)) \quad \forall u\in  W^{1,\infty}(\Omega,\mathbb R^d).
\end{equation*}
Indeed, for every fixed $u\in W^{1,\infty}(\Omega,\R^d)$ and  for every  fixed $\varepsilon>0$ there exists $\bar k\in\N$ such that
\begin{align} \label{supdis} \esssup_{\Omega} h_{\infty}(x,Du(x)) &=\sup_{k\geq 1} \esssup_{\Omega} (\widetilde{ f^{p_k})}^{ 1/ {p_k}} (x,Du(x))\\
&\leq \esssup_{\Omega} (\widetilde{ f^{p_{\bar k}}})^{ 1/ {p_{\bar k}}}(x,Du(x)) +\varepsilon. \nonumber \end{align}
Then,  by \eqref{pq} there exists a measurable set $\Omega'\subseteq \Omega$ such that ${\mathcal L}^n(\Omega\setminus \Omega')=0$ and 
 $$(\widetilde{ f^{p_{\bar k}}})^{1/ {p_{\bar k} }}(x,\xi)\leq (\widetilde{ f^k})^{1/  k}(x,\xi),$$  for every $k\geq p_{\bar k}$,
 for every  $x\in \Omega'$ and $\xi\in  \R^{d\times n}$.
 In particular, \eqref{supdis} and \eqref{finftydef} imply
$$\esssup_{\Omega} h_{\infty}(x,Du(x))\leq \esssup_{\Omega} (\widetilde{ f^k }(x,Du(x)))^{ \frac 1 k }+\varepsilon \leq \esssup_{\Omega}f_{\infty}(x,Du(x))+\varepsilon .$$
By sending $\varepsilon$ to $0$ we get that 
$$\esssup_{\Omega} h_{\infty}(x,Du(x))\leq \esssup_{\Omega}f_{\infty}(x,Du(x)) .$$

The proof of the converse inequality is analogous.

\vspace{1.3cm}

\color{black}
{\bf Acknowledgements}

L. D. acknowledges the support of the Austrian Science Fund (FWF) through grants F65, V662, Y1292. M. E.  has been partially supported by PRIN 2020 ``Mathematics for industry 4.0 (Math4I4)'' (coordinator P. Ciarletta).
E. Z. acknoledges the received support through Sapienza Progetti d'Ateneo 2022 piccoli ``Asymptotic Analysis for composites, fractured materials and with defects '', and through PRIN 2022 `` Mathematical Modelling of Heterogeneous Systems'' (coordinator E. N. M. Cirillo), CUP B53D23009360006.
The authors are members of INdAM-GNAMPA, whose support is gratefully ackowledged through GNAMPA Project 2023 
  ``Prospettive nelle scienze dei materiali: modelli variazionali, analisi asintotica ed omogeneizzazione''.\\
  {\color{blue} We thank the anonymous referees for their valuable comments that improved the presentation of the paper.}

\end{document}